\documentclass[12pt,a4paper,reqno]{amsart}
\usepackage{fix-cm}
\usepackage[utf8]{inputenc}
\bibliographystyle{plain}
\usepackage{geometry}
\geometry{left=2cm,right=2cm,top=2cm,bottom=2cm}
\usepackage{amsmath, amsthm, cases}
\usepackage{amsfonts}
\usepackage{amssymb}
\usepackage{amsthm}
\usepackage{mathrsfs}
\usepackage{color}
\usepackage{float}
\usepackage{epsfig}
\usepackage{graphicx}
\usepackage{caption}
\usepackage{subcaption}
\usepackage{hyperref}
\usepackage[noabbrev]{cleveref}
\numberwithin{equation}{section}

\newcommand{\la}{\langle}
\newcommand{\ra}{\rangle}

\newcommand{\pa}{\partial}

\newcommand{\R}{\mathbb{R}}
\newcommand{\s}{\mathbb{S}}

\newcommand{\x}{\boldsymbol{x}}

\newtheorem{theorem}{Theorem}[section]

\newtheorem{lemma}[theorem]{Lemma}
\newtheorem{definition}[theorem]{Definition}
\newtheorem{remark}[theorem]{Remark}
\newtheorem{proposition}[theorem]{Proposition}
\newcommand{\Ito}{It\^o\ }
\newcommand{\per}{\mathrm{per}}
\newcommand{\M}{{M_{\eta,\phi_2(t)}}}
\newcommand{\tf}{\tilde f}
\newcommand{\Ind}[1]{\mathbf{1}_{\left\{#1\right\}}}

\setlength{\unitlength}{1cm}\setlength{\topmargin}{0.1in}
\setlength{\textheight}{8.4in}\setlength{\textwidth}{6in}
\setlength{\oddsidemargin}{0.1in}\setlength{\evensidemargin}{0.1in}
\begin{document}
	
\title[phase transition of the kinetic J-K models]{Phase transition of the kinetic Justh-Krishnaprasad type model for nematic alignment}
\author{Seung-Yeal Ha$^1$, Hui Yu$^2$, Baige Zhou$^3$}
\address{$^1$Department of Mathematical Sciences and Research Institute of Mathematics, Seoul National University, Seoul, 08826, Republic of Korea.}
\email{syha@snu.ac.kr}
\address{$^2$ School of Mathematics and Computational Science, Xiangtan University, Xiangtan, 411105, China.}
\email{huiyu@xtu.edu.cn}
\address{$^3$ Department of Mathematical Sciences, Tsinghua University, Beijing, 100084, China.}
\email{zbg22@mails.tsinghua.edu.cn}	
\subjclass{35Q84, 35A05, 35B40}
\keywords{Nematic alignment, Justh-Krishnaprasad model, phase transition}
 
\begin{abstract}
We present a stochastic Justh-Krishnaprasad flocking model and study the phase transition of the Vlasov-McKean-Fokker-Planck (VMFP) equation, which can be obtained in the mean-field limit. 
To describe the alignment, we use order parameters in terms of the distribution function of the kinetic model. 
For the constant noise case, we study the well-posedness of the VMFP equation on the torus. 
Based on regularity, we show that the phenomenon of phase transition is only related to the ratio between the strengths of noise and coupling. 
In particular, for the low-noise case, we derive an exponential convergence to the von-Mises type equilibrium, which shows a strong evidence for the nematic alignment. 
The multiplicative noise is also studied to obtain a non-symmetric equilibrium with two different peaks on the torus.    
\end{abstract}	
	
\maketitle	
\section{Introduction} 
The emergence of collective behaviors in complex systems is ubiquitous in our nature, for instance, flocking of birds \cite{chate2008modeling,toner1998flocks,MR3363421}, swarming of fish \cite{degond2008large,MR2111591}, and aggregation of bacteria \cite{caratozzolo2008self,copeland2009bacterial,degond2015continuum,degond2018age}, etc. 
These collective dynamics has been a hot topic in physics \cite{berger1984microscopic,mehandia2008collective,sokolov2007concentration}, mathematics \cite{bellomo2022mathematical,carrillo2010asymptotic,coleman2017mathematics}, control theory \cite{MR3853918,MR1986266,olfati2007consensus,MR2531514} and biological systems \cite{del1985quantum,deutsch2012collective,MR1924782,MR2111591}. Collective patterns are the results of communications between agents. Therefore, to understand biological mechanisms behind the collective behaviors of agents, one needs to take into account
of communications between constituent agents: how the particles sense their neighbors and how they react to their
neighbors’ stimulus. For the modeling of such collective motions, several phenomenological models have been studied in literature. To name a few, the Kuramoto model \cite{kuramoto1975international}, the Vicsek model \cite{MR3363421}, and the Cucker-Smale model \cite{cucker2007emergent}, etc. Among them, the modeling of nematic alignment has also received a lot of attentions. Nematic alignment was first observed in a group of Myxobacteria \cite{degond2015continuum,degond2018age} in which bacteria move either in the
same direction or in the opposite direction and they form a traveling band asymptotically. To investigate the temporal evolutions in bacteria, a specialized model was introduced in \cite{ha2020stochastic,ha2022emergent}, which is called the generalized Justh-Krishnaprasad (J-K) model. Based on the original model of Justh and Krishnaprasad \cite{justh2003steering,justh2002simple}, where all particles move with a unit speed on a planar domain, a nontrival communication function is adopted to fit the double-heading angle case. Using the proposed model, the authors in \cite{ha2020stochastic,ha2022emergent} show nematic alignment at the particle level for both constant noise case and multiplicative noise case under suitable conditions by means of order parameters. Nematic alignment has also been studied in other models: for example, in \cite{cho2016emergence2,cho2016emergence} a bi-cluster phenomenon is achieved by the coupling in the particle model; in physics literature \cite{perepelitsa2022mean,sese2021phase}, agent-based models are used to descirbe nematic alignment, and at the same time, the numbers of the particles in both directions are the same in equilibrium state.

We should also emphasize that current research pays significant attention to multi-scale models that connect agent-based approaches to continuum
approaches \cite{chate2008modeling,fetecau2011collective,ponisch2017multiscale}. Instead of tracking the position and the velocity of each agent, the kinetic model deals with the temporal-phase space evolution of one-particle distribution function. At the mesoscopic level, the Boltzmann-type models can exhibit a variety of spatio-temporal patterns \cite{carrillo2014non,eftimie2007complex}. The mean-field kinetic models can be derived formally using the BBGKY hierarchy \cite{king1975bbgky} from particle models using the McKean-Vlasov process, under sufficient conditions with respect to the interaction kernels. This topic is also called propagation of chaos, which is studied on the course of Sznitman at Saint-Flour \cite{sznitman1991topics} in 1991. Recently, the review of Jabin and Wang \cite{MR3858403} focuses on McKean mean-field systems with singular kernels by using the relative entropy. In the macroscopic scale, hydrodynamic models can be derived from the kinetic models by taking velocity moments together with suitable closure conditions \cite{degond2020nematic,poyato2017euler}. Compared to particle models, hydrodynamic models are more efficient for large number of agents, because they simply encode different particle quantities into simple averages, such as density, momentum and energy. 

In the study of equilibria and convergence, phase transition plays a crucial role in the non-equilibrium process, e.g. change of states in liquid crystal \cite{brazovskii1975phase} as the temperature varies. Phase transition appears in several contexts, such as the emergence of flocking dynamics near the critical mass of self-propelled particles \cite{barbaro2016phase,carrillo2020long,MR3067586,MR3305654,frouvelle2012dynamics,MR4344428} and interacting particle systems of continuous Markov process in probability \cite{dotsenko1983critical,liggett1985interacting}. These
models exhibit a kind of competitions between alignment and noise. For the well-known Vicsek model, phase transition is discussed in \cite{MR3305654,frouvelle2012dynamics} for a spatially homogeneous case, where they also obtain the critical point between drift and diffusion. On one hand, below this value, the only equilibrium is isotropic with respect to velocity direction and is stable unconditionally. On the other hand, when the noise strength is above the threshold, a bifurcation of von-Mises distribution with a constant mean orientation emerges, and it is unstable. They also show that there exists an exponential convergence to the equilibrium,  if initial datum is close to an invariant measure. This kind of questions have also been studied in the Cucker-Smale type model using asymptotic analysis \cite{barbaro2016phase}.

To fix the idea, we begin with the particle J-K model. Let $\x_t^j\in \R^2$ and $\theta_t^j\in \s^1$ be the position and heading angle processes of the $j$-th particle at time $t$, and we assume that their dynamics are governed by the following system of coupled SDEs:  
	\begin{equation}\label{original particle system}
		\left\{\begin{aligned}
			d\x_t^j&=\left(\cos\theta_t^j, \sin\theta_t^j\right)dt,\quad t>0,\quad j\in[N]:=\{1,\ldots,N\},\\
			{d\theta_t^j}&=\frac{\kappa}{N}\sum_{k=1}^{N}\psi\left(\vert \x_t^k-\x_t^j\vert\right)\sin\left(2\left(\theta_t^k-\theta_t^j\right)\right)dt+\sqrt{2\sigma}dB_t^j,
		\end{aligned}\right.
	\end{equation}
	where $\kappa$ is a positive coupling constant, $\vert\cdot\vert$ denotes the standard $\ell^2$-norm in $\mathbb{R}^2$ and $\psi$ is a Lipschitz continuous communication weight function satisfying the boundedness, positivity and regularity conditions:
	\begin{equation}\label{A1}
	    0<\psi(r)\leq\psi(0)\ \quad\forall \ r\geq 0; \quad
     [\psi]_{\rm{Lip}}<\infty.
	\end{equation}
  In the heading angle dynamics, $\sigma>0$ is the strength of the noise, and $\{B_t^j\}_{1\leq j\leq N}$ is the collection of $N$ i.i.d. one-dimensional Brownian motions.

  Note that if $\psi$ is a constant, then the heading angle dynamics reduces to the stochastic Kuramaoto model \cite{ha2008particle} for identical oscillators. In the previous paper \cite{ha2020stochastic}, the nematic alignment has been obtained for the constant noise case. For the multiplicative noise, there is also related work \cite{ha2022emergent} for an agent-based model.

In this paper, we deal with the kinetic J-K model for both constant noise and multiplicative noise and study their equilibria as well as phase transition. Therefore, we generalize the noise of the particle model in Section $2$ and study its corresponding mean-field model. 

We emphasize that for the mean-field Vicsek model, the equilibrium states are von-Mises distributions \cite{frouvelle2012dynamics}. Similarly, we can follow the procedures in \cite{frouvelle2012dynamics} and obtain corresponding equilibrium for our proposed model. 
 Since the interaction term has the form of convolution, the stationary states are more complicated. 
  In this paper, we use the LaSalle invariance principle to show the convergence to an equilibrium using $L^2(\s)$ estimate and the relative entropy method \cite{carrillo2020long}. 
  Note that we cannot write the interaction term into a gradient form which is needed to be convex in Langevin equation. Thus, instead of using the Logarithmic Sobolev inequalities directly, we analyze the convergence in a more general way. 
  
\vspace{0.2cm}
  The rest of the paper is organized as follows. In Section $2$, we describe the generalized particle J-K model and derive the corresponding mean-field equation. Then we provide a self-contained proof for the well-posedness and regularity with a constant noise in spatially homogeneous case. In Section $3$, we consider constant noise case. In this case, we use a free energy to study the asymptotic behavior of the solution. We also determine all the steady states and show that the value $\sigma/\kappa=1/4$ is a critical value. When the ratio is over the threshold, uniform distribution is the only equilibrium. However, below this critical value, there are two kinds of steady states (uniform distribution and a type of generalized von-Mises distribution). For the subcritical case, we study the convergence to the uniform distribution both in $L^2$ norm and the relative entropy. In contrast, for the supercritical case $\sigma/\kappa<1/4$, we show that there is an exponential convergence to the von-Mises type equilibrium. In Section $4$, we study the nonconstant noise. To obtain an explicit non-symmetric steady state, we consider a special ansatz of noise and study its corresponding kinetic equation. Phase transition is also included in nonconstant noise case. Finally, Section $5$ is devoted to a brief summary of main results and some remaining issues for a future work.

\section{Mean-field limit of the Justh-Krishnaprasad model}	
In this section, we discuss a formal kinetic J-K model which can be obtained from the particle model \eqref{particle system} below in the mean-field limit $(N\to\infty)$, and we study its global well-posedness and regularity. 

\vspace{0.15cm}
Let $\x_t^j\in \R^2$ and $\theta_t^j\in \s^1$ be the position and heading angle of the $j$-th particle at time $t$, whose dynamics is governed by the following system of coupled SDEs:  
	\begin{equation}\label{particle system}
		\left\{\begin{aligned}
			d\x_t^j&=\left(\cos\theta_t^j, \sin\theta_t^j\right)dt, \ \ t>0,\ \ j\in [N],\\
			{d\theta_t^j}&=\frac{\kappa}{N}\sum_{k=1}^{N}\psi\left(\vert \x_t^k-\x_t^j\vert\right)\sin\left(2\left(\theta_t^k-\theta_t^j\right)\right)dt+\sqrt{2\sigma D_t^j(X_t,\Theta_t)}dB_t^j.
		\end{aligned}\right.
	\end{equation}
 Throughout the paper, we denote the position and heading angle configurations by 
 $(X_t,\Theta_t)$:
	\[
	X_t=\left(\x_t^1,\cdots,\x_t^N\right)\in\R^{2N},\qquad\Theta_t=\left(\theta_t^1,\cdots,\theta_t^N\right)\in\s^N.
	\] 
  Here, the new term $D_t^j=D_t^j(X_t,\Theta_t)$ compared to \eqref{original particle system} is a bounded stochastic process with the following general form:
	 \begin{equation}\label{A3}
	 	D_t^j(X_t,\Theta_t)=\left(\frac{1}{N}\sum_{k=1}^{N}\mathfrak{b}(\x_t^k-\x_t^j,\theta_t^k-\theta_t^j)\right)^2,
	 \end{equation}
 	where $\mathfrak{b}=\mathfrak{b}(\x,\theta)$ is a Lipschitz continuous function such that
    \begin{equation}\label{A4}
        0\leq\mathfrak{b}(\x,\theta)\leq 1\quad \forall (\x,\theta)\in\R^2\times\s^1;\quad \mathfrak b(\boldsymbol{0},0)=0.
    \end{equation}  
Since the coefficients on the right-hand side of \eqref{particle system} are Lipschitz continuous and have sublinear growth in state variables, by the classical Cauchy-Lipschitz theory of stochastic differential equations \cite[Chapter 5]{karatzas1991brownian}, system \eqref{particle system} is globally well-posed for $(\x,\theta)\in \R^2\times\s ^1$.

  Note that depending on how to define stochastic integrals, 
  we have three types of Fokker-Planck equations:
  \begin{equation}\label{F-P}
		\left\{\begin{aligned}
			&\pa_t f(t,\x,\theta)+(\cos\theta,\sin\theta)\cdot\nabla_{\x} f+\kappa\pa_{\theta}\big(L[f]f\big)=\sigma\pa_{\theta\theta} \big(D[f]f\big),\\
			&\pa_t f(t,\x,\theta)+(\cos\theta,\sin\theta)\cdot\nabla_{\x} f+\kappa\pa_{\theta}\big(L[f]f\big)=\sigma\pa_{\theta} \Big(\sqrt{D[f]}\pa_{\theta}\big(\sqrt{D[f]}f\big)\Big),\\
            &\pa_t f(t,\x,\theta)+(\cos\theta,\sin\theta)\cdot\nabla_{\x} f+\kappa\pa_{\theta}\big(L[f]f\big)=\sigma\pa_{\theta} \big(D[f]\pa_{\theta}f\big),
		\end{aligned}\right.
	\end{equation}
 	where the functionals $L[f]$ and $D[f]$ are given as follows:
    \begin{equation}\label{def_L_D}
		\left\{\begin{aligned}
			&L[f](t,\x,\theta)=\int_{\R^2\times \s^1}\psi(\vert\x_*-\x\vert)\sin 2(\theta_*-\theta)f(t,\x_*,\theta_*)d\x_*d\theta_*,\\
			&D[f](t,\x,\theta)=\left(\int_{\R^2\times \s^1}\mathfrak{b}(\x_*-\x,\theta_*-\theta)f(t,\x_*,\theta_*)d\x_* d\theta_*\right)^2.
            \end{aligned}\right.
	\end{equation}	
For the Fokker-Planck equations \eqref{F-P}, the \Ito form in the first line in \eqref{F-P} is a good approximation of discrete noise effect, which is widely accepted in mathematical finance \cite{MR2039136}. 
  The Stratonovich integral in the middle line is often used in engineering and physics communities \cite{gardiner2009stochastic}. 
  The equation in the last line, sometimes known as the kinetic form \cite{klimontovich1990ito}, is also used to study the Maxwellian equilibrium states. Three cases coincide with each other, when the noise term is constant $D[f]\equiv D$ (constant). 
  In this paper, we use the kinetic form $\eqref{F-P}_3$, since we want to analyze the equilibrium state and its asymptotic stability for the mean-field model.

\subsection{Mean-field limit in the kinetic form}

Note that for the particle model \eqref{particle system}, when $N\to\infty$, it is quite difficult to analyze the dynamics of the whole system. To overcome this problem, we use the mean-field approximation by the one-particle distribution function $f=f(t,\x,\theta)$, which gives the probability of the particle lying on position $\x$ with the heading angle $\theta$ at time $t$. 
We want to derive the equation that the distribution function satisfies. 
For all the noises that we consider in the particle model \eqref{particle system}, we assume that the conditions in \eqref{A1}, \eqref{A3} and \eqref{A4} hold. 
By the process of the Vlasov-McKean limit, we can get the Fokker-Planck equation for the stochastic particle model:
\begin{equation}\label{IBVP}
    \left\{\begin{aligned}
       &\pa_t f+(\cos\theta,\sin\theta)\cdot\nabla_{\x} f+\kappa\pa_\theta \left(L[f]f\right)=\sigma\pa_\theta\left(D[f]\pa_\theta f\right),\quad \x\in \mathbb{R}^2,\; \theta \in \s^1, \; t >0, \\
&f|_{t=0}=f_0,  \\
&f(t,\x,\theta) = f(t,\x, \theta+2\pi), \qquad
\lim_{|\x| \to \infty}f(t,\x,\theta) = 0,  
    \end{aligned}\right.
\end{equation}
where $L[f]$ and $D[f]$ are the heading angle alignment force and diffusion coefficient given in \eqref{def_L_D}. 

\subsection{The spatially homogeneous J-K model with constant noise}
In this section, we consider the spatially homogeneous case and the constant noise term, i.e., the distribution function $f$ is independent of the position $\x$ and $D[f]\equiv1$. 
In this setting, $f$ satisfies
\begin{equation}\label{IBVP_hom}
    \left\{\begin{aligned}
    &\pa_tf+\kappa\pa_\theta\big(L[f]f\big)=\sigma\pa_{\theta\theta} f, \quad \theta \in \s^1, \; t >0, \\
&f|_{t=0}=f_0,\quad f(t,\theta) = f(t,\theta+2\pi),
&
    \end{aligned}\right.
\end{equation}
where $L[f]$ is given as follows:
\[
	L[f](t,\theta)=\int_{\s^1}\sin \left(2(\theta_*-\theta)\right)f(t,\theta_*)d\theta_*.
\]

\subsubsection{Well-posedness and regularity}
Before discussing the global existence of classical solutions, we introduce the function spaces and some associated norms:
\begin{equation*}
    \left\{\begin{aligned}
        &L_{\per}^2:=\left\{\varphi:\s^1\rightarrow\R \Big\vert\int_{\s^1}\varphi^2d\theta<\infty \right\}, \ C_{\per}=C_{\per}^0:=\left\{\varphi:\s^1\to\mathbb{R} \ \text{is} \ \text{continuous}\right\}\\ 
        &H_{\per}^m:=\left\{\varphi:\s^1\rightarrow\R \Big\vert\frac{d\varphi}{d\theta}\in H_{\per}^{m-1} \ \text{in} \ \text{weak} \ \text{sense}\right\},\quad\ m\in \mathbb{Z}_0:=\{0\}\cup\mathbb{N},\\
        &C_{\per}^m:=\left\{\varphi:\s^1\to\mathbb{R}\Big\vert\frac{d\varphi}{d\theta}\in C_{\per}^{m-1}\right\},\quad m\in\mathbb{Z}_0,\\
        &\Vert \varphi\Vert_{L_{\per}^2}:=\left(\int_{\s^1}\varphi^2d\theta\right)^{\frac{1}{2}}, \quad
\Vert \varphi\Vert_m:=\Vert \varphi\Vert_{H_{\per}^m}=\left(\sum_{k=0}^{m}\int_{\s^1}\left\vert\frac{d^k\varphi}{d\theta^k}\right\vert^2d\theta\right)^{\frac{1}{2}}.
    \end{aligned}\right.
\end{equation*}
For a time-dependent function $h=h(t,\theta)$, we use handy notations throughout the paper:
\begin{equation*}
    \Vert h(t)\Vert _{L_{\per}^2}\equiv \Vert h(t,\cdot)\Vert_{L_{\per}^2}\quad \text{and} \quad \Vert h(t)\Vert _{m}\equiv \Vert h(t,\cdot)\Vert_{H_{\per}^m}. 
\end{equation*}
\begin{definition}
    We say that a function $f$ is a weak solution of \eqref{IBVP_hom} with initial probability distribution $f_0\in L_{\per}^2$, if the following relations hold:
    \begin{itemize}
    \item[(i)] $f\in L^2\left(\mathbb{R}_+;H_{\per}^1\right)$, $\pa_t f\in L^2\left(\mathbb{R}_+; H_{\per}^{-1}\right)$;
    \vspace{0.1cm}
     \item[(ii)] $\left\la\la\pa_t f,\varphi\right\ra\ra+\sigma\left(\pa_\theta f,\pa_\theta \varphi\right)=\kappa \left(L[f]f,\pa_\theta \varphi\right)\quad \forall \varphi\in H_{\per}^1, \text{ a.e. } \ t\geq 0$;
     \vspace{0.1cm}
        \item[(iii)] $f(t)\to f_0 \ in \ L_{\per}^2$ as $t\to 0$,
    \end{itemize}
    where $\la\la\cdot,\cdot\ra\ra$ is the dual pairing between $H_{\per}^1$ and $H_{\per}^{-1}$, and $(\cdot,\cdot)$ is the inner product in $L_{\per}^2$. 
    \end{definition}
Note that the drift term in \eqref{IBVP_hom} is nonlinear. Therefore, we cannot use the result for linear parabolic equations directly. In the next theorem, we summarize the well-posedness and regularity of \eqref{IBVP_hom}.   
\begin{theorem}\label{thm:pde}
    Let $f_0\in H_{\per}^m$, $m>\frac{5}{2}$ be the initial probability measure. Then there exists a unique strong solution for \eqref{IBVP_hom}. Moreover, this solution is global-in-time, nonnegative and smooth.
\end{theorem}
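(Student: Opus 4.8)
The plan is to exploit the very special structure of the interaction functional $L[f]$, which turns \eqref{IBVP_hom} into a semilinear parabolic equation with smooth, uniformly bounded coefficients. Expanding $\sin(2(\theta_*-\theta))$ by the addition formula, one sees that
\[
L[f](t,\theta)=b(t)\cos2\theta-a(t)\sin2\theta,\qquad a(t)=\int_{\s^1}\cos2\theta_*\,f\,d\theta_*,\quad b(t)=\int_{\s^1}\sin2\theta_*\,f\,d\theta_*,
\]
so $L[f]$ is a trigonometric polynomial of degree two whose coefficients are the second Fourier moments of $f$. Integrating \eqref{IBVP_hom} over $\s^1$ and using periodicity shows that the total mass $\int_{\s^1}f\,d\theta$ is conserved; hence $|a(t)|,|b(t)|\leq\|f(t)\|_{L^1_{\per}}$ stays bounded, and in turn $L[f]$ together with all of its $\theta$-derivatives is uniformly bounded in $L^\infty$. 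This finite-rank, smoothing nature of the nonlinearity is what makes every estimate below close without any smallness assumption.

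First I would establish local-in-time existence and uniqueness. Writing \eqref{IBVP_hom} in mild form via the heat semigroup $e^{\sigma t\pa_{\theta\theta}}$ on $\s^1$ and Duhamel's formula, the map $f\mapsto e^{\sigma t\pa_{\theta\theta}}f_0-\kappa\int_0^t e^{\sigma(t-s)\pa_{\theta\theta}}\pa_\theta(L[f]f)\,ds$ is a contraction on $C([0,T];H^m_{\per})$ for $T$ small, because $H^m_{\per}$ is a Banach algebra for $m>\fr12$ and $f\mapsto L[f]f$ is locally Lipschitz there while the semigroup controls the extra $\pa_\theta$. The choice $m>\fr52$ guarantees $H^m_{\per}\hookrightarrow C^2(\s^1)$, so the fixed point is in fact a classical solution. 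Uniqueness follows from an $L^2_{\per}$ estimate on the difference $w=f_1-f_2$ of two solutions: using linearity of $L$ to split $L[f_1]f_1-L[f_2]f_2=L[f_1]w+L[w]f_2$ and the bound $\|L[w]\|_{L^\infty}\leq C\|w\|_{L^2_{\per}}$, a Gr\"onwall argument forces $w\equiv0$.

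Next I would promote the local solution to a global one through a priori Sobolev estimates. Testing the equation against $f$ and integrating by parts gives $\fr12\fr{d}{dt}\|f\|_{L^2_{\per}}^2=-\sigma\|\pa_\theta f\|_{L^2_{\per}}^2+\kappa(L[f]f,\pa_\theta f)$; since $\|L[f]\|_{L^\infty}$ is bounded, Young's inequality absorbs the nonlinear term into the diffusion and yields $\fr{d}{dt}\|f\|_{L^2_{\per}}^2\leq C\|f\|_{L^2_{\per}}^2$. For the higher norms I would differentiate $k$ times, test with $\pa_\theta^k f$, and expand $\pa_\theta^k(L[f]f)$ by the Leibniz rule: the top-order term $L[f]\pa_\theta^k f$ is again absorbed by the diffusion, while every remaining term carries a derivative of $L[f]$ — uniformly bounded by the first paragraph — times a derivative of $f$ of order at most $k$, hence is controlled by $\|f\|_m$. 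Summing over $0\leq k\leq m$ produces $\fr{d}{dt}\|f\|_m^2\leq C\|f\|_m^2$, so Gr\"onwall rules out finite-time blow-up of the $H^m_{\per}$ norm and the solution extends to all $t>0$.

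Finally, nonnegativity and smoothness. Rewriting the equation in the linear non-divergence form $\pa_tf=\sigma\pa_{\theta\theta}f-\kappa L[f]\pa_\theta f-\kappa(\pa_\theta L[f])f$ with bounded coefficients that are smooth in $\theta$, the weak maximum principle (or a Stampacchia truncation testing against the negative part $f_-$) gives $f(t,\cdot)\geq0$ whenever $f_0\geq0$. Smoothness comes from parabolic bootstrapping: the coefficients are $C^\infty$ in $\theta$, so the global $H^m_{\per}$ bound can be upgraded one derivative at a time, yielding $f\in C^\infty((0,\infty)\times\s^1)$. The only genuinely delicate point is the local existence step — since the drift is nonlinear one cannot invoke linear parabolic theory directly — but the finite-rank form of $L[f]$ reduces it to a standard contraction argument; thereafter all the work lies in the routine, and closable, energy estimates.
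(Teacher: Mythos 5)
Your overall architecture is sound and in fact mirrors the paper's: local existence, nonnegativity by the maximum principle, the resulting uniform bound on $L[f]$, global extension by a linear Gr\"onwall estimate, and parabolic bootstrapping for smoothness. However, there is a logical slip in the opening paragraph that propagates through the write-up. You assert that mass conservation makes $|a(t)|,|b(t)|\leq\|f(t)\|_{L^1_{\per}}$ ``stay bounded,'' and hence that $L[f]$ and all its $\theta$-derivatives are uniformly bounded in $L^\infty$ \emph{before} you have established nonnegativity. Conservation of $\int_{\s^1}f\,d\theta$ controls the signed mass, not $\|f\|_{L^1_{\per}}=\int_{\s^1}|f|\,d\theta$; for a possibly sign-changing $f$ the only a priori bound is $\|L[f]\|_{L^\infty}\leq\sqrt{2\pi}\,\|f\|_{L^2_{\per}}$, which turns your $L^2$ estimate into the quartic inequality $\frac{d}{dt}\|f\|_{L^2_{\per}}^2\leq C\|f\|_{L^2_{\per}}^2\bigl(1+\|f\|_{L^2_{\per}}^2\bigr)$ and yields only a finite existence time. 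Since your global Sobolev estimates and the absorption of the top-order Leibniz term both invoke the ``uniform boundedness from the first paragraph,'' the argument as written is circular: that bound needs $f\geq0$ and unit mass, and you only prove nonnegativity at the end. The fix is precisely the paper's ordering: obtain the local solution on $[0,T)$ with the quartic estimate, apply the maximum principle on that interval to get $f\geq0$ and hence $|L[f]|\leq\int_{\s^1}f\,d\theta=1$, and only then run the linear Gr\"onwall argument to extend the solution globally (the paper does this by iterating a lower bound on the successive existence times $T_{k+1}-T_k$).

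With that reordering your proof goes through, and the one genuine methodological difference is welcome: you build the local solution by a fixed-point argument for the Duhamel formulation in $C([0,T];H^m_{\per})$, exploiting that $H^m_{\per}$ is an algebra and that the heat semigroup absorbs the outer $\pa_\theta$, whereas the paper uses a Galerkin scheme in $L^2_{\per}$ for the mean-zero perturbation $\tilde f=f-\frac{1}{2\pi}$ and only afterwards upgrades regularity. Your route is shorter and directly produces a classical solution when $m>\frac52$, at the cost of requiring more initial regularity at the existence stage; the Galerkin route gives existence already for $f_0\in L^2_{\per}$ and reserves the assumption $m>\frac52$ for the maximum-principle and smoothness steps. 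Either is acceptable, but please also state explicitly that the weak maximum principle needs the standard exponential-weight (or Stampacchia) modification to handle the zeroth-order coefficient $-\kappa\,\pa_\theta L[f]$, which has no sign; the paper makes this weight explicit through the functions $a(t)$ and $b(t)$ in its Proposition on classical solutions and nonnegativity.
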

\begin{proof}
Since the proof is very lengthy, we briefly outline our proof strategy in several steps:
\begin{itemize}
    \item 
    Step A: We derive the equation for the mean-zero function $\tilde{f}(t,\theta):=f(t,\theta)-\frac{1}{2\pi}$.
    \vspace{0.1cm}
    \item 
    Step B: We obtain a local well-posedness of $\tf$ in $[0,T)$, where a finite final time $T$ depends on $\sigma$, $\kappa$ and $\left\Vert \tf_0\right\Vert_{L_{\per}^2}$.
    \vspace{0.1cm}
    \item 
    Step C: We improve the estimate of $\left\Vert\tf(t)\right\Vert_{L_{\per}^2}$ by showing the positivity of $f(t)$, $t<T$.
    \vspace{0.1cm}
    \item 
    Step D: We extend $T$ to infinity to derive a global nonnegative and smooth solution. 
\end{itemize}
The detailed proof can be found in what follows. 
\end{proof}
$\bullet$ (Derivation of the equation for the perturbation $\tf$): 
We first write down the weak formulation for $\tf$ as follows:
\begin{equation}\label{eq_tilde(f)}
    \left\la\left\la\pa_{t}\tf, \varphi\right\ra\right\ra+\sigma \left(\pa_{\theta}\tf, \pa_{\theta}\varphi\right)=\kappa \left(L[\tf]\tf,\pa_{\theta}\varphi\right)+\frac{\kappa}{2\pi}\left(L[\tf],\pa_{\theta}\varphi\right)
    \quad \forall \varphi \in H^1_{\per},
    \end{equation}
where 
\[
\tf\in L^2\left(\mathbb{R}_+;H_{\per}^1\right), \ \pa_t \tf\in L^2\left(\mathbb{R}_+; H_{\per}^{-1}\right) \quad \text{and} \quad \tf(t) \to \tilde f_0 \ \text{in} \ L_{\per}^2 \ \text{as}\  t\to 0.
\]

We set
    \[
    T:=\frac{\sigma}{2\kappa^2}\log{\frac{1+2\pi\left\Vert\tilde f_0\right\Vert_{L_{\per}^2}^2}{\frac{1}{4}+2\pi\left\Vert\tilde f_0\right\Vert_{L_{\per}^2}^2}}.
    \]
    In the sequel, we present two preparatory lemmas for local well-posedness.
\begin{lemma}\label{lemma:exist of tf}
 There exists a weak solution $\tf$ on $[0,T)$ satisfying \eqref{eq_tilde(f)}, with $\tf$ uniformly bounded in $L^2\left([0,T);\dot H_{\per}^1\right)\cap H^1\left([0,T);\dot H_{\per}^{-1}\right)$, where $\dot H_{\per}^{k}$ denotes the subspace of $H_{\per}^{k}$ with mean zero.  
    \end{lemma}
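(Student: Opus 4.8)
The plan is to construct $\tf$ by a Galerkin (spectral) approximation adapted to the circle, extract from it one sharp Bernoulli-type energy inequality that is uniform in the discretization, and then pass to the limit by compactness. Since $\tf$ is mean-zero and $2\pi$-periodic, I would work in the orthogonal basis $\{\cos k\theta,\sin k\theta\}_{k\ge1}$ of $\dot H^{0}_{\per}$, let $P_m$ be the projection onto the first $m$ modes, and seek $\tf_m=P_m\tf_m$ solving the projected version of \eqref{eq_tilde(f)} with $\tf_m(0)=P_m\tf_0$. Because $L[\cdot]$ is linear and all couplings are bilinear, the projected system is a finite system of ODEs whose right-hand side is polynomial (indeed quadratic) in the Fourier coefficients, so Cauchy--Lipschitz yields a unique local-in-time solution; the a priori bound below then prevents the coefficients from escaping the energy ball before time $T$, so each $\tf_m$ in fact exists on all of $[0,T)$.

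The heart of the argument is the energy estimate. Testing the projected equation against $\tf_m$ itself and exploiting that the kernel $\sin(2(\theta_*-\theta))$ makes $L$ see only the second Fourier mode, I would use the exact identity $(L[\tf_m],\pa_\theta\tf_m)=2(a_m^2+b_m^2)$ with $a_m=\int_{\s^1}\cos2\theta\,\tf_m$, $b_m=\int_{\s^1}\sin2\theta\,\tf_m$, together with the Bessel bound $a_m^2+b_m^2\le\pi\|\tf_m\|_{L^2_{\per}}^2$ and the pointwise bound $\|L[\tf_m]\|_{L^\infty}=\sqrt{a_m^2+b_m^2}\le\sqrt\pi\,\|\tf_m\|_{L^2_{\per}}$. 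Estimating the cubic drift by Cauchy--Schwarz and Young's inequality to absorb a factor $\tfrac\sigma2\|\pa_\theta\tf_m\|_{L^2_{\per}}^2$, bounding the linear term by $\kappa\|\tf_m\|_{L^2_{\per}}^2$, and then using the Poincaré inequality $\|\pa_\theta\tf_m\|_{L^2_{\per}}^2\ge\|\tf_m\|_{L^2_{\per}}^2$ on mean-zero functions together with the elementary fact $2\kappa^2-2\kappa\sigma+\sigma^2=\kappa^2+(\kappa-\sigma)^2\ge0$, all constants collapse to
\begin{equation*}
\frac{d}{dt}\|\tf_m(t)\|_{L^2_{\per}}^2\le\frac{2\kappa^2}{\sigma}\,\|\tf_m(t)\|_{L^2_{\per}}^2\Big(1+2\pi\|\tf_m(t)\|_{L^2_{\per}}^2\Big).
\end{equation*}
Separating variables in the associated comparison ODE, one verifies that the comparison solution issued from $\|\tf_0\|_{L^2_{\per}}^2$ reaches exactly $4\|\tf_0\|_{L^2_{\per}}^2$ at the stated time $T$; hence $\|\tf_m(t)\|_{L^2_{\per}}^2<4\|\tf_0\|_{L^2_{\per}}^2$ on $[0,T)$ uniformly in $m$. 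Feeding this back into the energy identity and integrating controls $\int_0^{T}\|\pa_\theta\tf_m\|_{L^2_{\per}}^2\,dt$, so $(\tf_m)$ is bounded in $L^\infty([0,T);L^2_{\per})\cap L^2([0,T);\dot H^{1}_{\per})$ independently of $m$.

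Next I would bound the time derivative. Reading $\pa_t\tf_m$ off the projected equation and pairing with $\varphi\in\dot H^{1}_{\per}$, each of the diffusion, nonlinear, and linear terms is controlled in $\dot H^{-1}_{\per}$ by the norms already established, so $\pa_t\tf_m$ is uniformly bounded in $L^2([0,T);\dot H^{-1}_{\per})$ and therefore $(\tf_m)$ is bounded in $H^1([0,T);\dot H^{-1}_{\per})$. With the compact chain $\dot H^{1}_{\per}\hookrightarrow\hookrightarrow L^2_{\per}\hookrightarrow\dot H^{-1}_{\per}$, the Aubin--Lions--Simon lemma furnishes a subsequence converging strongly in $L^2([0,T);L^2_{\per})$, weakly in $L^2([0,T);\dot H^{1}_{\per})$, weakly-$*$ in $L^\infty([0,T);L^2_{\per})$, and with $\pa_t\tf_m$ converging weakly in $L^2([0,T);\dot H^{-1}_{\per})$, to a limit $\tf$ inheriting the stated uniform bounds together with the mean-zero and initial conditions.

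The remaining task is to pass to the limit in the nonlinear, nonlocal drift $\kappa(L[\tf_m]\tf_m,\pa_\theta\varphi)$, and I expect the sharp energy estimate above—not this step—to be the genuine obstacle, precisely because it is the energy estimate that must exploit the second-mode structure and balance the cubic term against diffusion to reproduce the explicit $T$ and the clean uniform bound $\|\tf\|_{L^2_{\per}}^2<4\|\tf_0\|_{L^2_{\per}}^2$. The limit passage itself is comparatively mild here: $L$ is a bounded, in fact smoothing, linear operator on $L^2_{\per}$ that extracts only the second Fourier mode, so the strong $L^2([0,T);L^2_{\per})$ convergence of $\tf_m$ forces $L[\tf_m]\to L[\tf]$ strongly (even uniformly in $\theta$), and combined with the strong convergence of $\tf_m$ and the uniform $L^2$ bounds the product $L[\tf_m]\tf_m$ converges in $L^2([0,T);L^2_{\per})$, which suffices to pass the limit against $\pa_\theta\varphi$; the linear term $\tfrac{\kappa}{2\pi}(L[\tf_m],\pa_\theta\varphi)$ and the diffusion and time-derivative terms pass by the same strong and weak convergences. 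This identifies $\tf$ as the desired weak solution of \eqref{eq_tilde(f)} on $[0,T)$, uniformly bounded in $L^2([0,T);\dot H^{1}_{\per})\cap H^1([0,T);\dot H^{-1}_{\per})$.
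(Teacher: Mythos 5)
Your proposal is correct and follows essentially the same route as the paper: a Galerkin approximation in the Fourier basis, an $L^2$ energy estimate leading to the Bernoulli-type inequality $\frac{d}{dt}\|\tf_m\|_{L^2_{\per}}^2\le\frac{2\kappa^2}{\sigma}\|\tf_m\|_{L^2_{\per}}^2\bigl(1+2\pi\|\tf_m\|_{L^2_{\per}}^2\bigr)$ whose comparison solution reproduces the stated $T$ and the bound $\|\tf_m\|_{L^2_{\per}}\le 2\|\tf_0\|_{L^2_{\per}}$, a uniform $\dot H^{-1}_{\per}$ bound on $\pa_t\tf_m$, and compactness to pass to the limit in the nonlocal drift. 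The only differences are cosmetic refinements (the exact second-mode identity $(L[\tf_m],\pa_\theta\tf_m)=2(a_m^2+b_m^2)$ with slightly sharper constants, and the invocation of Aubin--Lions where the paper cites Ascoli--Arzel\`a), which do not change the argument.
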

\begin{proof}
    For the existence of $\tilde f$, we use the method of Galerkin approximations. 
    Let $\{w_k\}_{k=1}^{\infty}$ be the complete set of normalized eigenfunctions for $-\partial_\theta^2$ in $\dot H_{\per}^1$, and $\{w_k\}_{k=1}^{\infty}$ is an orthonormal basis of $\dot L_{\per}^2$. 
    Suppose that  $\tf^n$ is the unique solution of the following Cauchy problem:
\begin{equation}
		\left\{\begin{aligned}\label{GF-P}	&\pa_t\tf^n+\kappa\pa_\theta\left(L\left[\tf^n\right]\tf^n\right)+\frac{\kappa}{2\pi}\pa_\theta L\left[\tf^n\right]=\sigma\pa_{\theta\theta}\tf^n, \quad \theta\in\s^1,\quad t>0,   \\
        &\tf^n(0)=\Pi_n\left(\tf_0\right),
		\end{aligned}\right.
	\end{equation}
 where $\Pi_n$ is the orthogonal projection on the finite dimensional function space $P_n:=\mathrm{span} \ \{w_k\}_{k=1}^n$.
 
 \vspace{0.15cm}
 Next, we claim that there exists a subsequence $\left\{\tf^{n_k}\right\}_{k=1}^\infty$ satisfying the following three assertions: 
 \begin{itemize}
     \item[(i)] $\tf^{n_k}$ 
     converges weakly to a function $\tf$ in $L^2\left([0,T);\dot H_{\per}^1\right)$.
     \item[(ii)] $\pa_t\tf^{n_k}$ converges weakly to $\pa_t f$ in $L^2\left([0,T); \dot H_{\per}^{-1}\right)$.
     \item[(iii)] $L\left[\tf^{n_k}\right](t)\ \to \ L[\tf](t)$ uniformly for each $t>0$, as $k\to\infty$.
 \end{itemize}
 Note that the weak form of the first equation in \eqref{GF-P} is given by
 \begin{equation}\label{wGF-P}
 \begin{aligned}
     \frac{d}{dt}\left\la\left\la\tf^n,\varphi\right\ra\right\ra+\sigma\left(\pa_\theta \tf^n,\pa_\theta \varphi\right)=\kappa \left(L[\tf^n]\tf^n,\pa_\theta \varphi\right)+\frac{\kappa}{2\pi} \left(L[\tf^n],\pa_\theta \varphi\right),
 \end{aligned} 
 \end{equation}
for all $\varphi\in P_n$. For a fixed $t>0$, we take $\varphi(\theta)=\tf^n(t,\theta)$ in \eqref{wGF-P} to obtain
\begin{equation}\label{eq_tilde fn}
\begin{aligned}
    &\frac{1}{2}\frac{d}{dt}\left\Vert \tf^n(t)\right\Vert^2_{L_{\per}^2}+\sigma\left\Vert\pa_{\theta}\tf^n(t)\right\Vert^2_{L_{\per}^2}\\
    &\hspace{1cm}=\kappa \left(L\left[\tf^n\right](t)\tf^n(t), \pa_{\theta}\tf^n(t)\right)+\frac{\kappa}{2\pi}\left(L\left[\tf^n\right](t), \pa_{\theta}\tf^n(t)\right).
\end{aligned}    
\end{equation}
The difficulty lies in the first term on the right-hand side of \eqref{eq_tilde fn}. We want to control the supremum of $L\left[\tf^n\right]$:
\begin{align*}
    \left\Vert L\left[\tf^n\right](t)\right\Vert_{\infty}
    &=\left\Vert \int_{\s^1}\sin 2(\theta_*-\theta)\tf^n(t,\theta_*)d\theta_*\right\Vert_{\infty}    \leq\int_{\s^1}\left\vert\tf^n(t,\theta_*)\right\vert d\theta_*\\
    &\leq\sqrt{2\pi}\left\Vert \tf^n(t)\right\Vert_{L_{\per}^2}.
\end{align*}
We use the boundedness of $\left\Vert L\left[\tf^n\right](t)\right\Vert_{\infty}$ to estimate two terms on the right-hand side of \eqref{eq_tilde fn}:
\begin{align}\label{new_16}
    &\kappa \left(L\left[\tf^n\right](t)\tf^n(t), \pa_{\theta}\tf^n(t)\right)\notag\\
    \leq &\sqrt{2\pi}\kappa\left\Vert \tf^n(t)\right\Vert^2_{L_{\per}^2}
\left(\tf^n(t),\pa_{\theta}\tf^n(t)\right)\leq \frac{\sigma}{4}\left\Vert \pa_{\theta}\tf^n(t)\right\Vert_{L_{\per}^2}^2+\frac{2\pi\kappa^2}{\sigma}\left\Vert\tf^n(t)\right\Vert_{L_{\per}^2}^4,
\end{align}
and 
\begin{align}\label{new_15}
    &\frac{\kappa}{2\pi}\left(L\left[\tf^n\right](t), \pa_{\theta}\tf^n(t)\right)\notag\\
    &\hspace{0.5cm}\leq\frac{\kappa}{\sqrt{2\pi}}\left\Vert \tf^n(t)\right\Vert^2_{L_{\per}^2}\left(1,\pa_{\theta}\tf^n(t)\right)
    \leq \frac{\sigma}{4}\left\Vert \pa_{\theta}\tf^n(t)\right\Vert_{L_{\per}^2}^2+\frac{\kappa^2}{\sigma}\left\Vert\tf^n(t)\right\Vert_{L_{\per}^2}^2.
\end{align}
Next, we substitute these two estimates into \eqref{eq_tilde fn} to get
\begin{equation}\label{ineq_tilde fn}
    \frac{1}{2}\frac{d}{dt}\left\Vert \tf^n(t)\right\Vert^2_{L_{\per}^2}+\frac{\sigma}{2}\left\Vert\pa_{\theta}\tf^n(t)\right\Vert^2_{L_{\per}^2}\leq \frac{\kappa^2}{\sigma}  \left\Vert \tf^n(t)\right\Vert^2_{L_{\per}^2}\left(1+2\pi \left\Vert \tf^n(t)\right\Vert^2_{L_{\per}^2}\right).
\end{equation}
Note that if $\tilde f_0\equiv0$, then \eqref{eq_tilde(f)} reduces to $\tf(t,\theta)\equiv 0$ for all $t\geq 0$ and $\theta\in \s^1$. 
Thus, we assume 
\[
\left\Vert\tilde f_0\right\Vert_{L_{\per}^2}>0.
\]
Here we choose $n$ large enough such that $\left\Vert \tf^n_0\right\Vert^2_{L_{\per}^2}>0$. Now we drop the second term on the left-hand side of \eqref{ineq_tilde fn} to obtain 
\begin{equation*}
    \frac{1}{2}\frac{d}{dt}\left\Vert \tf^n(t)\right\Vert^2_{L_{\per}^2}\leq \frac{\kappa^2}{\sigma}  \left\Vert \tf^n(t)\right\Vert^2_{L_{\per}^2}\left(1+2\pi \left\Vert \tf^n(t)\right\Vert^2_{L_{\per}^2}\right).
\end{equation*}
This implies 
\[
\left\Vert \tf^n(t)\right\Vert_{L_{\per}^2}\leq \left\Vert \tf^n_0\right\Vert_{L_{\per}^2}\left[\left(1+2\pi\left\Vert \tf^n_0\right\Vert_{L_{\per}^2}^2\right)\exp\left(-\frac{2\kappa^2 t}{\sigma}\right)-2\pi\left\Vert \tf^n_0\right\Vert_{L_{\per}^2}^2\right]^{-\frac{1}{2}},
\]
for 
\[
t<\frac{\sigma}{2\kappa^2}\log\frac{1+2\pi\left\Vert\tilde f_0\right\Vert_{L_{\per}^2}^2}{2\pi\left\Vert\tilde f_0\right\Vert_{L_{\per}^2}^2}.
\]
Then we  obtain
\[
\left\Vert \tf^n(t)\right\Vert_{L_{\per}^2}\leq 2\left\Vert \tf_0\right\Vert_{L_{\per}^2},\quad t\in [0,T). 
\]
By \eqref{ineq_tilde fn}, we have
\[
\left\Vert\pa_{\theta}\tf^n(t)\right\Vert_{L_{\per}^2}\leq C_1\left(\kappa, \sigma, \left\Vert \tf_0\right\Vert_{L_{\per}^2}\right), \quad t\in [0,T).
\] 
This means that $\tf^n$ is bounded in $L^2\left([0,T);\dot H_{\per}^1\right)$, and it follows from \eqref{wGF-P} that 
\begin{align*}
    &\left\la\left\la\pa_t\tf^n(t),\varphi\right\ra\right\ra\\
    &\hspace{1cm}\leq 2\sqrt{2\pi}\kappa\left\Vert \tf_0\right\Vert_{L_{\per}^2} \left\Vert \tf^n(t)\right\Vert_{L_{\per}^2}\left\Vert\pa_{\theta}\varphi\right\Vert_{L_{\per}^2}+\kappa\left\Vert \tf^n(t)\right\Vert_{L_{\per}^2}\Vert\pa_{\theta}\varphi\Vert_{L_{\per}^2}\\
    &\hspace{1.35cm}+\sigma\left\Vert\pa_{\theta}\tf^n(t)\right\Vert_{L_{\per}^2}\Vert\pa_{\theta}\varphi\Vert_{L_{\per}^2}\\
    &\hspace{1cm}\leq C_2\left(\kappa, \sigma, \left\Vert \tf_0\right\Vert_{L_{\per}^2}\right)\Vert\pa_{\theta}\varphi\Vert_{L_{\per}^2}
    \leq C_2\left(\kappa, \sigma, \left\Vert \tf_0\right\Vert_{L_{\per}^2}\right)\Vert \varphi\Vert_{H_{\per}^1}\quad \forall \varphi\in H_{\per}^1.
\end{align*}
Therefore, we have
\begin{equation*} 
\left\Vert\pa_t \tf^n(t)\right\Vert_{H_{\per}^{-1}}\leq C_2\left(\kappa, \sigma, \left\Vert \tf_0\right\Vert_{L_{\per}^2}\right),\quad t\in [0,T), 
\end{equation*}
so that $\tf^n$ is bounded in $H^1\left([0,T);\dot H_{\per}^{-1}\right)$.
Now we use the Ascoli-Arzel\`a theorem to find a subsequence $\{n_k\}_{k=1}^\infty$ such that
\begin{itemize}
    \item[(i)] $\tf^{n_k}$ converges to some $\tf$ in $H^1\left([0,T); \dot H_{\per}^{-1}\right)\cap L^2\left([0,T);\dot H_{\per}^1\right)$, and 
    \item[(ii)]  $L\left[\tf^{n_k}\right]$ converges to some $L$ on $[0,T)$.
\end{itemize}
Since we have for each $\theta\in \s^1$, 
\begin{align*}
	&\left| L\left[\tf^{n_k}\right](t,\theta)-L\left[\tf\right](t,\theta)\right|\\&\hspace{0.5cm}=\int_{S^1}\sin2(\theta_*-\theta)\left(\tf^{n_k}(t,\theta_*)-\tf(t,\theta)\right)d\theta_*\leq \sqrt{2\pi}\left\Vert \tf^{n_k}(t)-\tf(t)\right\Vert_{L_{\per}^2},
\end{align*}
we obtain 
\[
L=L\left[\tf\right].
\]
Finally, we need to show that $\tf$ is indeed a weak solution of \eqref{IBVP} satisfying the initial condition. 
For a fixed $\varphi\in P_n$ in \eqref{wGF-P}, we pass to the weak limit to get 
\begin{equation*}
    \left\la\left\la\pa_t \tf,\varphi\right\ra\right\ra+\sigma\left(\pa_\theta \tf,\pa_\theta \varphi\right)=\kappa\left(L\left[\tf\right]\tf,\pa_\theta \varphi\right)+\frac{\kappa}{2\pi}\left(L\left[\tf\right],\pa_\theta \varphi\right).
\end{equation*}
This is valid for each $n\in \mathbb{N}$. Since $\overline{\bigcup_n P_n}=\dot H_{\per}^1$, $\tf$ is indeed a weak solution of the equation. 
By J. L. Lion's theorem as in \cite[Section 5.9, Theorem 3]{evans2022partial}, we know that actually $\tf\in C\left([0,T);\dot L_{\per}^2\right)$. To complete the proof, we only need to show that $\tf$ satisfies the initial condition. For each $\varphi\in H_{\per}^{1}$,
\begin{align*}
	\left(\tf^n(t)-\Pi_n(f_0),\varphi\right)&=\int_0^t\left(\pa_t \tf^n(\tau),\varphi\right) d\tau\leq C_3\left(\kappa, \sigma, \left\Vert \tf_0\right\Vert_{L_{\per}^2}\right)\left\Vert \varphi\right\Vert_{H_{\per}^{1}}t,
\end{align*} 
uniformly in $n$. So we pass to the limit $t\to 0$ to see $\tf(t)\rightarrow \tf_0$ in $L_{\per}^2$.
\end{proof}

\begin{remark}
	Note that if $f_0\in H_{\per}^m$, $m\in \mathbb{Z}_0$, then the Cauchy problem \eqref{eq_tilde(f)} admits a solution in $L^2\left(\left[0,\tilde T\right);H_{\per}^{m+1}\right)\cap H^1\left(\left[0,\tilde T\right);H_{\per}^{m-1}\right)$, where $\tilde T$ depends on $\kappa$, $\sigma$, $m$ and $\Vert f_0\Vert_{H_{\per}^m}$.
\end{remark}

\begin{lemma}[Continuity with respect to initial data] \label{lemma_continuity_initial}
    Suppose that we have two weak solutions $\tilde f^{(1)}$ and $\tilde f^{(2)}$, with 
    $\left\Vert \tilde f^{(i)}_0\right\Vert_{L^2_{\per}}\leq K$, $i=1,2$. Then $\tilde f^{(1)}-\tilde f^{(2)}$ is bounded in $L^2\left([0,T);\dot H_{\per}^{1}\right)\cap H^1\left([0,T);\dot H_{\per}^{-1}\right)$ by $\ C_4(\kappa, \sigma, K)\left\Vert \tilde f^{(1)}_0-\tilde f^{(2)}_0\right\Vert_{L^2_{\per}}$.
\end{lemma}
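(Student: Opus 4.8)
The plan is to run a standard stability estimate adapted to the bilinear nonlinearity. Write $g:=\tilde f^{(1)}-\tilde f^{(2)}$ and subtract the two weak formulations \eqref{eq_tilde(f)} satisfied by $\tilde f^{(1)}$ and $\tilde f^{(2)}$. The only nonroutine point is the drift term, which I linearize using the linearity of $L[\cdot]$ in its argument: $L[\tilde f^{(1)}]\tilde f^{(1)}-L[\tilde f^{(2)}]\tilde f^{(2)}=L[g]\tilde f^{(1)}+L[\tilde f^{(2)}]g$, and similarly $L[\tilde f^{(1)}]-L[\tilde f^{(2)}]=L[g]$ in the last term. This yields a weak equation for $g$ whose source terms are each at most linear in $g$ but still carry a factor of $\tilde f^{(1)}$ or $\tilde f^{(2)}$.

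Next I would test this equation with $\varphi=g(t)$ (admissible since $g(t)\in \dot H_{\per}^1$ for a.e.\ $t$, with the usual Lions--Magenes justification), giving $\tfrac12\tfrac{d}{dt}\|g(t)\|_{L^2_{\per}}^2+\sigma\|\pa_\theta g(t)\|_{L^2_{\per}}^2$ equal to the sum of the three source pairings. Each of these I bound by the elementary estimate $\|L[h](t)\|_{\infty}\le\sqrt{2\pi}\,\|h(t)\|_{L^2_{\per}}$ already used in the proof of Lemma \ref{lemma:exist of tf}, together with the uniform-in-time a priori bound $\|\tilde f^{(i)}(t)\|_{L^2_{\per}}\le 2\|\tilde f^{(i)}_0\|_{L^2_{\per}}\le 2K$ valid on $[0,T)$. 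Every term then takes the form $C(\kappa,K)\|g(t)\|_{L^2_{\per}}\|\pa_\theta g(t)\|_{L^2_{\per}}$, and a Young inequality absorbs a factor $\tfrac\sigma2\|\pa_\theta g(t)\|_{L^2_{\per}}^2$ into the diffusion, leaving $\tfrac12\tfrac{d}{dt}\|g(t)\|_{L^2_{\per}}^2+\tfrac\sigma2\|\pa_\theta g(t)\|_{L^2_{\per}}^2\le C(\kappa,\sigma,K)\|g(t)\|_{L^2_{\per}}^2$. Grönwall's inequality then gives $\|g(t)\|_{L^2_{\per}}^2\le e^{C(\kappa,\sigma,K)t}\|g(0)\|_{L^2_{\per}}^2$ on $[0,T)$, and integrating the differential inequality over $[0,T)$ controls $\int_0^T\|\pa_\theta g\|_{L^2_{\per}}^2\,dt$ by $C(\kappa,\sigma,K,T)\|g(0)\|_{L^2_{\per}}^2$, which is the desired $L^2([0,T);\dot H_{\per}^1)$ bound (recall $\|g(0)\|_{L^2_{\per}}=\|\tilde f^{(1)}_0-\tilde f^{(2)}_0\|_{L^2_{\per}}$).

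Finally, for the $H^1([0,T);\dot H_{\per}^{-1})$ part I would read $\pa_t g$ off the weak equation: for every $\varphi\in \dot H_{\per}^1$ the pairing $\la\la\pa_t g,\varphi\ra\ra$ is bounded, using the same two ingredients, by $C(\kappa,\sigma,K)\big(\|g(t)\|_{L^2_{\per}}+\|\pa_\theta g(t)\|_{L^2_{\per}}\big)\|\varphi\|_{H_{\per}^1}$, so that $\|\pa_t g(t)\|_{\dot H_{\per}^{-1}}\le C(\|g\|_{L^2_{\per}}+\|\pa_\theta g\|_{L^2_{\per}})$; squaring, integrating in $t$, and invoking the two bounds just obtained finishes the proof.

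I expect the only real obstacle to be the bookkeeping of the bilinear term: one must be careful that after the splitting the factor carrying $\tilde f^{(i)}$ is always placed in $L^\infty$ (via $L[\cdot]$) or in $L^2$ (via the uniform bound $2K$) and never differentiated, so that all the $g$-dependence is concentrated in $\|g\|_{L^2_{\per}}$ and $\|\pa_\theta g\|_{L^2_{\per}}$. This is exactly what makes the Young/Grönwall closure work and what forces the constant $C_4$ to depend on $K$ through the a priori bound.
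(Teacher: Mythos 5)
Your proposal is correct and follows essentially the same route as the paper's proof: the same bilinear splitting of $L[\tilde f^{(1)}]\tilde f^{(1)}-L[\tilde f^{(2)}]\tilde f^{(2)}$ into terms linear in the difference, testing with the difference itself, the bound $\Vert L[h]\Vert_\infty\leq\sqrt{2\pi}\Vert h\Vert_{L^2_{\per}}$ combined with the a priori $L^2_{\per}$ bound on the two solutions, Young's inequality to absorb half the diffusion, and Gr\"onwall. Your explicit treatment of the $H^1([0,T);\dot H_{\per}^{-1})$ bound for $\pa_t g$ is a detail the paper only asserts, but it is the standard duality argument and adds nothing structurally new.
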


\begin{proof}
  We know that $\tilde f^{(1)}$ and $\tilde f^{(2)}$ belong to $L^2\left([0,T);\dot H_{\per}^{1}\right)$. So $\tilde f^{(1)}(t)$ and $\tilde f^{(2)}(t)$ are both in $H_{\per}^{1}$, for a.e. $t\in[0,T)$. Next, we derive the equation for  $h:=\tilde f^{(1)}(t)-\tilde f^{(2)}(t)$. It follows from the definition that for all $\varphi\in H_{\per}^1$,
  \begin{align*}
  	&\langle \la\pa_t h,\varphi\ra\rangle+\sigma(\pa_\theta h,\pa_\theta \varphi)\\
  	&\hspace{1cm}=\kappa \left(L\left[\tilde f^{(1)}\right]\tilde f^{(1)},\pa_\theta \varphi\right)-\kappa \left(L\left[\tilde f^{(2)}\right]\tilde f^{(2)},\pa_\theta \varphi\right)\\
    &\hspace{1.35cm}+\frac{\kappa}{2\pi} \left(L\left[\tilde f^{(1)}\right],\pa_\theta \varphi\right)-\frac{\kappa}{2\pi} \left(L\left[\tilde f^{(2)}\right],\pa_\theta \varphi\right)\\
  	&\hspace{1cm}=\kappa\left(L\left[\tilde f^{(1)}\right]h,\pa_\theta \varphi\right)+\kappa \left((L[h])\tilde f^{(2)},\pa_\theta \varphi\right)+\frac{\kappa}{2\pi} (L[h],\pa_\theta \varphi).
  \end{align*}
  Then we  substitute $h(t)$ into the above equation to get 
  \begin{align*}
  	\langle \la\pa_t h,h\ra\rangle+\sigma(\pa_\theta h,\pa_\theta h)
  	&=\kappa \left(L\left[\tilde f^{(1)}\right]h,\pa_\theta h\right)+\kappa \left(L[h]\tilde f^{(2)},\pa_\theta h\right)+\frac{\kappa}{2\pi}(L[h],\pa_\theta h).
  \end{align*}
  Thus we obtain
  \begin{align}\label{new_10}
  	&\frac{1}{2}\frac{d}{dt}\Vert h(t)\Vert^2_{L^2_{\per}}+\sigma\Vert\pa_{\theta}h(t)\Vert^2_{L^2_{\per}}\notag\\
       &\hspace{0.5cm}\leq \sqrt{2\pi}\kappa\left\Vert \tf^{(1)}(t)\right\Vert_{L^2_{\per}}(h(t),\pa_{\theta}h(t))+\frac{\kappa}{\sqrt{2\pi}}\Vert h(t)\Vert_{L^2_{\per}}(1,\pa_{\theta}h(t))\notag\\
        &\hspace{0.8cm}+\sqrt{2\pi}\kappa\Vert h(t)\Vert_{L^2_{\per}}\left(\tf^{(1)}(t),\pa_{\theta}h(t)\right)\notag \\
  	&\hspace{0.5cm}\leq \frac{1}{2}\sigma\Vert\pa_{\theta}h(t)\Vert^2_{L^2_{\per}}+C_{5}(\sigma,\kappa,K)\Vert h(t)\Vert^2_{L^2_{\per}},\quad t<T.
  \end{align}
  By Gronwall's lemma, the differential inequality \eqref{new_10} yields:
  \begin{equation}\label{3}
      \Vert h(T)\Vert^2_{L^2_{\per}}+\sigma\int_0^T\Vert\pa_{\theta}h(\tau)\Vert^2_{L^2_{\per}}d\tau\leq C_{6}(\sigma,\kappa,K)\Vert h_0\Vert^2_{L^2_{\per}}.
  \end{equation}
Finally, we use \eqref{3} to verify that $h$ is bounded in $L^2\left([0,T);H_{\per}^{1}\right)\cap H^1\left([0,T);H_{\per}^{-1}\right)$ by the constant $C_6\Vert h_0\Vert_{L^2_{\per}}$.
\end{proof}
\begin{remark}
	Lemma \ref{lemma_continuity_initial} implies the uniqueness of the weak solution.
\end{remark}

\begin{proposition}[Classical solutions and nonnegativity]\label{prop:strong solution}
Suppose that $f_0\in H_{\per}^m$ with $m$ large enough such that the weak solution belongs to $C\left([0,T);C^2_{\per}\right)$. Then $f$ is a classical solution. Furthermore, if $f_0$ is nonnegative, then f is nonnegative, and it satisfies the following estimates:
\begin{equation*}
    a(t)\min_{\s^1}f_0<f(t,\theta)<b(t)\max_{\s^1}f_0 \qquad  \forall \  t\in(0,T) \quad \forall \ \theta\in \s^1,
\end{equation*}
where $a=a(t)$, and $b=b(t)$ are explicitly defined as follows:
\begin{equation*}
\left\{\begin{aligned}
    &a(t):=\exp{\left[-2\kappa\int_0^t \left(\sup_{\theta\in\s^1}\int_{\s^1}\cos2(\theta_*-\theta)f(\tau,\theta_*)d\theta_*\right)d\tau\right]},   
    \\
    &b(t):=\exp{\left[2\kappa\int_0^t \left(\sup_{\theta\in\s^1}\int_{\s^1}\cos2(\theta_*-\theta)f(\tau,\theta_*)d\theta_*\right)d\tau\right]}.
\end{aligned}\right.
\end{equation*}
\end{proposition}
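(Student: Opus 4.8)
The plan is to regard the spatially homogeneous equation in \eqref{IBVP_hom}, once the solution $f$ is in hand, as a \emph{linear} parabolic equation whose coefficients are frozen along $f$, and then to run a comparison/maximum-principle argument on the boundaryless torus $\s^1$. Expanding the divergence term, I would first rewrite the PDE in non-divergence form
\begin{equation*}
\pa_t f - \sigma\pa_{\theta\theta}f + \kappa L[f]\,\pa_\theta f + \kappa\big(\pa_\theta L[f]\big)f = 0,
\end{equation*}
where $\pa_\theta L[f](t,\theta) = -2\int_{\s^1}\cos 2(\theta_*-\theta)f(t,\theta_*)\,d\theta_*$. Setting $Q(t,\theta):=\int_{\s^1}\cos 2(\theta_*-\theta)f(t,\theta_*)\,d\theta_*$, the zeroth-order coefficient is $-2\kappa Q$ and $P(t):=\sup_\theta Q(t,\theta)$ is exactly the quantity appearing in $a$ and $b$. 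The classical-solution assertion is then immediate: under the stated hypothesis $f(t,\cdot)\in C^2_{\per}$, and since the right-hand side $\sigma\pa_{\theta\theta}f-\kappa\pa_\theta(L[f]f)$ is therefore continuous in $t$, we obtain $f\in C^1$ in time, so $f$ satisfies \eqref{IBVP_hom} pointwise.

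The decisive structural observation is a half-period antisymmetry of $Q$: since $\cos 2(\theta_*-\theta-\tfrac{\pi}{2})=-\cos 2(\theta_*-\theta)$, we have $Q(t,\theta+\tfrac{\pi}{2})=-Q(t,\theta)$, so the range of $Q(t,\cdot)$ is symmetric about $0$ and
\begin{equation*}
\inf_\theta Q(t,\theta) = -\sup_\theta Q(t,\theta) = -P(t), \qquad |Q(t,\theta)|\le P(t).
\end{equation*}
This is precisely what forces the \emph{same} functional $P$ to control both the upper and the lower barrier.

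For the bounds I would track $F(t):=\max_\theta f(t,\theta)$ and $G(t):=\min_\theta f(t,\theta)$, both Lipschitz in $t$ by the $C^1$-in-time regularity and, via the envelope (Danskin) formula, differentiable a.e. with $F'(t)=\pa_t f(t,\theta_0(t))$ at a maximizer $\theta_0(t)$, similarly for $G$. At a maximizer $\pa_\theta f=0$ and $\pa_{\theta\theta}f\le 0$, so the non-divergence form yields
\begin{equation*}
F'(t) \le -\kappa\big(\pa_\theta L[f]\big)(t,\theta_0)\,F(t) = 2\kappa Q(t,\theta_0)\,F(t) \le 2\kappa P(t)\,F(t),
\end{equation*}
where the last inequality uses $F(t)\ge \tfrac{1}{2\pi}\int_{\s^1}f\,d\theta=\tfrac{1}{2\pi}>0$; Gr\"onwall then gives $F(t)\le b(t)\max_{\s^1}f_0$. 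At a minimizer $\pa_{\theta\theta}f\ge 0$, so $G'(t)\ge 2\kappa Q(t,\theta_1(t))\,G(t)$; treating $c_1(t):=2\kappa Q(t,\theta_1(t))$ as a bounded coefficient with $c_1\ge -2\kappa P$, the integrating-factor estimate gives $G(t)\ge G(0)\exp\!\big(\int_0^t c_1\big)\ge a(t)\min_{\s^1}f_0$ because $\min_{\s^1}f_0\ge 0$; in particular this delivers nonnegativity $G(t)\ge 0$ in one stroke. The strict inequalities then follow from the strong parabolic maximum principle: the nonnegative barrier $w:=b(t)\max_{\s^1}f_0-f$ satisfies $\pa_t w-\sigma\pa_{\theta\theta}w+\kappa L[f]\pa_\theta w-2\kappa P w = 2\kappa(P-Q)f\ge 0$, and a Hopf-type argument shows $w$ cannot vanish at an interior point for $t>0$ unless $f$ is spatially constant; the lower barrier is handled symmetrically.

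The main obstacle I anticipate is not the maximum-principle mechanics but the rigorous differentiation of $F$ and $G$: one must justify the envelope formula for the extremal values and argue that the pointwise relations at a possibly time-varying, non-unique extremizer translate into a valid differential inequality for the a.e.\ derivative, after which Gr\"onwall applies. Everything else—rewriting in non-divergence form, the antisymmetry of $Q$, and the sign of $\pa_{\theta\theta}f$ at extrema—is routine.
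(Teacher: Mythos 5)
Your proof is correct in substance but takes a genuinely different route from the paper's. Both start by rewriting the equation in non-divergence form with zeroth-order coefficient $2\kappa Q(t,\theta)$, $Q(t,\theta)=\int_{\s^1}\cos 2(\theta_*-\theta)f(t,\theta_*)\,d\theta_*$. From there the paper builds an exponential barrier (essentially $f/a(t)$) that satisfies a differential inequality with no zeroth-order term, invokes the weak parabolic maximum principle on the torus, and needs two auxiliary devices: a first-touching-time $T_0$ to guarantee $f\ge 0$ while bounding the zeroth-order term, and the regularization $f_0^\varepsilon=(f_0+\varepsilon)/(1+\varepsilon)$ to pass from positive to merely nonnegative data. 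You instead differentiate the extremal values $F(t)=\max_\theta f$, $G(t)=\min_\theta f$ (Hamilton's trick) and close with Gr\"onwall; this delivers the two-sided bound and nonnegativity simultaneously, dispensing with both the touching-time and the regularization, at the price of justifying the a.e.\ envelope differentiation --- which you correctly identify as the only nonroutine step, and which is standard for Lipschitz-in-time maxima of $C^1$ functions on a compact set. Your explicit observation that $Q(t,\theta+\tfrac{\pi}{2})=-Q(t,\theta)$, hence $\inf_\theta Q=-\sup_\theta Q=-P(t)$, is a genuine clarification: it is exactly what makes the single functional $P$ control both barriers, and it is left implicit in the paper (whose displayed lower-bound inequality carries a sign that only makes sense once this antisymmetry is invoked). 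One caveat on your final step: the strict inequalities cannot hold in full generality --- for $f_0\equiv\frac{1}{2\pi}$ one has $f\equiv b(t)\max f_0$ --- and your own strong-maximum-principle argument correctly isolates the spatially constant case as the exception; the paper's weak maximum principle likewise only yields the non-strict bounds, so neither proof (nor the statement as written) fully supports the strict ``$<$''.
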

\begin{proof}
    Since $f$ lies in $C\left([0,T);C^2_{\per}\right)$ and $D=1$ is assumed, then the right-hand side of \eqref{IBVP} is satisfied in the strong sense. Therefore, the left-hand side of \eqref{IBVP} is also satisfied in the strong sense and it is continuous with respect to $t$ and $\theta$. As a result, $f$ lies in $C^1\left([0,T);C_{\per}\right)$. Thus we can apply the chain rule to $\eqref{IBVP_hom}_1$ to obtain
    \begin{align*}
        \pa_t f(t,\theta)=&\sigma \pa_{\theta\theta}f(t,\theta)-\kappa L[f](t,\theta)\pa_{\theta}f(t,\theta)\\&\hspace{1cm}+2\kappa \left(\int_{\s^1}\cos 2(\theta_*-\theta)f(t,\theta_*)f(t,\theta)d\theta_*\right).
    \end{align*}
    The next part of the proof is the maximum principle. Here we only verify the lower bound. Assume that $f_0$ is positive, and we set $T_0\leq T$ to be the first time that $f(t,\theta)$ reaches $0$ for some $\theta\in\s^1$. Then for all $0\leq t\leq T_0$, 
    \begin{align*}
       \pa_t f(t,\theta)&\geq\sigma \pa_{\theta\theta}f(t,\theta)-\kappa L[f](t,\theta)\pa_{\theta}f(t,\theta)\\
       &\hspace{0.25cm}+2\kappa \left(\sup_{\theta\in\s^1}\int_{\s^1}\cos 2(\theta_*-\theta)f(t,\theta_*)d\theta_*\right)f(t,\theta). 
    \end{align*}
     We set 
     \[
     \Bar{f}(t,\theta):=f(t,\theta)a(t)\min\limits_{\s^1}f_0.
     \]
     Then we have
    \[
    \pa_t \Bar{f}\geq \sigma \pa_{\theta\theta}\Bar{f}-\kappa L[f]\pa_{\theta}\Bar{f}.
    \]
    By the weak maximum principle, we have that for $(t,\theta)\in [0,T_0]\times \s^1$, 
    \[
    \Bar{f}(t,\theta)\geq \min_{\theta_*\in\s^1}\Bar{f}(t,\theta_*)=\min_{\theta_*\in\s^1}\Bar{f}(0,\theta_*).
    \]
    This shows that $T_0=T$ by continuity. For the general case, $f_0$ is only nonnegative and we can do the same analysis for $f_0^{\varepsilon}=\frac{f+\varepsilon}{1+\varepsilon}$ to conclude the desired estimate. 
\end{proof}
\begin{remark}
	By Sobolev's embedding theorem, we only need $m>\frac{5}{2}$ to ensure that $f_0\in C([0,T);C^2_{\per})$.
\end{remark} 
\begin{proposition}[Global existence of weak solutions]\label{prop:uniform in time}
    The local weak solution $\tilde f$ in Lemma \ref{lemma:exist of tf} is indeed a global-in-time solution. 
\end{proposition}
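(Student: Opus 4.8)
The plan is to run a standard continuation argument. By Lemma~\ref{lemma:exist of tf} the solution exists on an interval determined by $\Vert\tf_0\Vert_{L_{\per}^2}$; let $T^*$ denote the supremum of all times up to which a (unique, by Lemma~\ref{lemma_continuity_initial}) weak solution exists, and suppose for contradiction that $T^*<\infty$. The local existence time in Lemma~\ref{lemma:exist of tf} depends only on $\sigma$, $\kappa$ and $\Vert\tf_0\Vert_{L_{\per}^2}$, and the explicit formula for $T$ shows it is a decreasing but strictly positive function of the initial norm; hence it is bounded below by a positive constant whenever the initial norm stays bounded. It therefore suffices to produce a bound on $\sup_{t\in[0,T^*)}\Vert\tf(t)\Vert_{L_{\per}^2}$ depending only on $\sigma,\kappa,T^*$ and the data: we may then restart the solution from a time $t_0$ close to $T^*$ and extend it strictly beyond $T^*$, a contradiction forcing $T^*=\infty$.

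The crucial input is an improved $L_{\per}^2$ estimate that exploits the nonnegativity of $f=\tf+\frac{1}{2\pi}$ established in Proposition~\ref{prop:strong solution}. First, integrating $\eqref{IBVP_hom}_1$ over $\s^1$ and using periodicity shows the total mass is conserved, so $\int_{\s^1}f(t)\,d\theta=1$ for all $t$. Testing the equation for $f$ against $f$ and integrating by parts gives
\begin{equation*}
\frac12\frac{d}{dt}\Vert f(t)\Vert_{L_{\per}^2}^2+\sigma\Vert\pa_\theta f(t)\Vert_{L_{\per}^2}^2=\kappa\int_{\s^1}\left(\int_{\s^1}\cos 2(\theta_*-\theta)f(t,\theta_*)\,d\theta_*\right)f(t,\theta)^2\,d\theta,
\end{equation*}
where I used $\pa_\theta L[f]=-2\int_{\s^1}\cos 2(\theta_*-\theta)f\,d\theta_*$ together with $f\pa_\theta f=\tfrac12\pa_\theta(f^2)$. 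Since $f\geq 0$ and $\int_{\s^1}f\,d\theta=1$, the inner integral is bounded by $1$ in absolute value, so the right-hand side is at most $\kappa\Vert f(t)\Vert_{L_{\per}^2}^2$. Dropping the dissipation and applying Gronwall's lemma yields $\Vert f(t)\Vert_{L_{\per}^2}^2\leq\Vert f_0\Vert_{L_{\per}^2}^2 e^{2\kappa t}$, which grows at most exponentially and is finite on $[0,T^*)$. Hence $\Vert\tf(t)\Vert_{L_{\per}^2}\leq\Vert f(t)\Vert_{L_{\per}^2}+\frac{1}{\sqrt{2\pi}}$ is uniformly bounded on $[0,T^*)$ by a constant $M=M(\sigma,\kappa,T^*,\Vert f_0\Vert_{L_{\per}^2})$.

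With this uniform bound the continuation is routine: choosing $t_0<T^*$ with $T^*-t_0$ smaller than the local existence time $\tau$ associated with initial norm $M$, Lemma~\ref{lemma:exist of tf} applied with datum $\tf(t_0)$ produces a solution on $[t_0,t_0+\tau)$ with $t_0+\tau>T^*$, and Lemma~\ref{lemma_continuity_initial} guarantees this extension coincides with $\tf$ and is unique, contradicting the maximality of $T^*$. I expect the main obstacle to be the improved estimate rather than the continuation: the naive bound \eqref{ineq_tilde fn} carries a quartic term $\Vert\tf\Vert_{L_{\per}^2}^4$ that only closes on a finite interval, and the whole point is that positivity and mass conservation collapse the dangerous nonlinear contribution to the quadratic bound $\kappa\Vert f\Vert_{L_{\per}^2}^2$. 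A secondary technical point is that the test-function computation is a priori only formal for a weak solution, so one should carry it out on the smooth Galerkin (or mollified) approximations, where Proposition~\ref{prop:strong solution} applies and mass and positivity are available, and then pass to the limit.
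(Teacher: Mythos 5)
Your proposal is correct and follows essentially the same route as the paper: the key step in both is to use the nonnegativity and unit mass of $f$ (from Proposition \ref{prop:strong solution}) to bound the nonlocal coefficient by $1$, which downgrades the quartic term in \eqref{ineq_tilde fn} to a quadratic one and yields an at-most-exponential $L_{\per}^2$ bound via Gronwall, after which the solution is continued past any finite time. The only differences are cosmetic — you run the energy estimate on $f$ rather than on $\tf$ and phrase the continuation as a maximal-time contradiction instead of the paper's explicit inductive sequence $T_k$ in \eqref{def_T_k}.
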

\begin{proof}
    With the well-posedness and positivity of $f$ over $[0,T)$ in Proposition \ref{prop:strong solution}, we can get a better estimate for $\left\Vert \tf(t)\right\Vert_{L_{\per}^2}$:
    \begin{align*}
        \left\vert L\left[\tf\right](t,\theta)\right\vert=&\vert L[f](t,\theta)\vert
        =\left\vert\int_{\s^1}\sin2(\theta_*-\theta)f(t,\theta)d\theta_*\right\vert
        \leq\int_{\s^1}f(t,\theta_*)d\theta_*
        =1.
    \end{align*}
    We return to the estimate of $\tf$:
    \begin{align*}
        \frac{1}{2}\frac{d}{dt}\left\Vert \tf(t)\right\Vert^2_{L_{\per}^2}+\sigma\left\Vert \pa_{\theta}\tf(t)\right\Vert^2_{L_{\per}^2}=\kappa \left(L\left[\tf\right]\tf,\pa_{\theta}\tf\right)+\frac{\kappa}{2\pi}\left(L\left[\tf\right],\pa_{\theta}\tf\right).
    \end{align*}
    For the first term on the right-hand side, we improve the estimate in \eqref{new_16}
    \begin{equation*}
       \kappa \left(L\left[\tf\right]\tf,\pa_{\theta}\tf\right)\leq \frac{\sigma}{4}\left\Vert \pa_{\theta}\tf(t)\right\Vert^2_{L_{\per}^2}+\frac{\kappa^2}{\sigma}\left\Vert \tf(t)\right\Vert^2_{L_{\per}^2}.
    \end{equation*}
    For the second term on the right-hand side, we use the previous estimate in \eqref{new_15}. Then we have 
    \begin{equation*}
        \frac{1}{2}\frac{d}{dt}\left\Vert \tf(t)\right\Vert^2_{L_{\per}^2}+\frac{\sigma}{2}\left\Vert \pa_{\theta}\tf(t)\right\Vert^2_{L_{\per}^2}\leq \frac{\kappa^2}{\sigma}(1+2\pi)\left\Vert \tf(t)\right\Vert^2_{L_{\per}^2}.
    \end{equation*}
    We use Gronwall's lemma to see
    \begin{equation}\label{est of tilde f v2}
        \left\Vert \tf(t)\right\Vert^2_{L_{\per}^2}\leq \exp{\left(\frac{\kappa^2}{\sigma}(1+2\pi)t\right)}\left\Vert \tf_0\right\Vert^2_{L_{\per}^2}\quad \forall \ t<T.
    \end{equation}
    We use an induction to define the solution as follows. We set
    \begin{equation}\label{def_T_k}
    T_1=T \quad \text{and}\quad T_{k+1}-T_k=\frac{\sigma}{2\kappa^2}\log{\frac{1+2\pi\left\Vert\tilde f(T_k)\right\Vert_{L_{\per}^2}^2}{\frac{1}{4}+2\pi\left\Vert\tilde f(T_k)\right\Vert_{L_{\per}^2}^2}}, \quad k\geq 1.
    \end{equation}
    Then it follows from \eqref{est of tilde f v2} that
    \[
    \left\Vert \tf(T_k)\right\Vert^2_{L_{\per}^2}\leq \exp{\left(\frac{\kappa^2}{\sigma}(1+2\pi)T_k\right)}\left\Vert \tf_0\right\Vert^2_{L_{\per}^2}.
    \]
    We prove the proposition by the contradiction argument. If we cannot extend $T_k$ to infinity, then the sequence $\{T_k\}_{k}$ has a finite limit, that is to say, $\left\Vert \tf(T_k)\right\Vert^2_{L_{\per}^2}$ has a upper bound for all $k\geq 1$. It results in that $T_{k+1}-T_k$ has a lower bound for all $k$ by \eqref{def_T_k}, which gives a contradiction.  
\end{proof}
\begin{proposition}[Regularity]\label{prop: smoothness}
  Suppose that $f_0$ is a probability density function on $\s^1$,and it belongs to $H_{\per}^m$ with $m>\frac{5}{2}$. Then, the weak solution $\tf$ in Proposition \ref{prop:uniform in time} belongs to $C^{\infty}(\mathbb{R}_+\times \s^1)$.  
\end{proposition}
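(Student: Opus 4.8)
The plan is to prove smoothness by combining higher-order energy estimates with the parabolic smoothing effect of the diffusion term $\sigma\pa_{\theta\theta}\tf$, organised as an induction on the Sobolev index. The decisive simplification is that the nonlocal drift is smooth in $\theta$: expanding the sine gives
\begin{equation*}
L[\tf](t,\theta)=a(t)\cos 2\theta-b(t)\sin 2\theta,\qquad a(t)=\int_{\s^1}\sin 2\theta_*\,\tf\,d\theta_*,\quad b(t)=\int_{\s^1}\cos 2\theta_*\,\tf\,d\theta_*,
\end{equation*}
so that $\left\Vert\pa_\theta^{\,j}L[\tf](t)\right\Vert_\infty\leq C_j\left\Vert\tf(t)\right\Vert_{L_{\per}^2}$ for every $j\geq0$, with $C_j$ depending only on $j$. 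By \eqref{est of tilde f v2} the right-hand side is bounded on every compact time interval, so the drift coefficients and all their $\theta$-derivatives are uniformly bounded there; in effect the equation \eqref{eq_tilde(f)} for $\tf$ is a uniformly parabolic equation whose lower-order coefficients are smooth in $\theta$, and the nonlocal coupling ceases to be an obstruction.

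I would then prove by induction on $k\geq 1$ that $\tf\in L^\infty_{\mathrm{loc}}\big((0,\infty);\dot H_{\per}^{k}\big)\cap L^2_{\mathrm{loc}}\big((0,\infty);\dot H_{\per}^{k+1}\big)$, the case $k=0$ being the content of the global existence in Proposition \ref{prop:uniform in time}. Assuming the statement for $k-1$, set $g:=\pa_\theta^k\tf$, differentiate the equation for $\tf$ $k$ times in $\theta$, test against $g$, and integrate by parts to reach
\begin{equation*}
\frac{1}{2}\frac{d}{dt}\left\Vert g\right\Vert_{L_{\per}^2}^2+\sigma\left\Vert\pa_\theta g\right\Vert_{L_{\per}^2}^2=-\kappa\left(\pa_\theta^{k+1}\!\big(L[\tf]\tf\big),g\right)-\frac{\kappa}{2\pi}\left(\pa_\theta^{k+1}L[\tf],g\right).
\end{equation*}
Expanding $\pa_\theta^{k+1}(L[\tf]\tf)$ by the Leibniz rule, the single top-order contribution $L[\tf]\,\pa_\theta g$ is removed by integration by parts, $\left(L[\tf]\pa_\theta g,g\right)=-\tfrac12\big((\pa_\theta L[\tf])g,g\big)$, so no genuinely top-order quantity survives; every remaining term pairs a bounded $\theta$-derivative of $L[\tf]$ with a $\theta$-derivative of $\tf$ of order at most $k$, and is estimated by $\left\Vert g\right\Vert_{L_{\per}^2}^2$ plus quantities controlled by $\left\Vert\tf\right\Vert_{H_{\per}^{k-1}}$, which is locally bounded by the inductive hypothesis. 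This yields a differential inequality
\begin{equation*}
\frac{d}{dt}\left\Vert g(t)\right\Vert_{L_{\per}^2}^2+\sigma\left\Vert\pa_\theta g(t)\right\Vert_{L_{\per}^2}^2\leq C\left\Vert g(t)\right\Vert_{L_{\per}^2}^2+C',
\end{equation*}
with $C,C'$ bounded on compact time intervals.

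To convert this into pointwise-in-time regularity despite the merely-$H_{\per}^m$ initial data, I would use the smoothing trick of starting from an interior instant: since $g=\pa_\theta^k\tf\in L^2_{\mathrm{loc}}(\dot L_{\per}^2)$ by the inductive hypothesis, for any $0<t_1<t_2$ the average of $\left\Vert g\right\Vert_{L_{\per}^2}^2$ over $(t_1/2,t_1)$ is finite, so one may choose $s\in(t_1/2,t_1)$ with $\left\Vert g(s)\right\Vert_{L_{\per}^2}$ finite and controlled by that average. Applying Gronwall's lemma to the differential inequality on $[s,t_2]$ bounds $\sup_{[t_1,t_2]}\left\Vert g\right\Vert_{L_{\per}^2}^2$ and, after integrating, $\int_{t_1}^{t_2}\left\Vert\pa_\theta g\right\Vert_{L_{\per}^2}^2\,d\tau$; this is exactly $\tf\in L^\infty([t_1,t_2];\dot H_{\per}^{k})\cap L^2([t_1,t_2];\dot H_{\per}^{k+1})$, closing the induction. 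The same conclusion also follows by restarting the solution at such an interior $s$ and invoking the higher-regularity local theory in the Remark after Lemma \ref{lemma:exist of tf} together with the uniqueness from Lemma \ref{lemma_continuity_initial}. Consequently $\tf(t,\cdot)\in\bigcap_{k}H_{\per}^{k}\subset C^\infty(\s^1)$ for every $t>0$.

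Finally, for joint space-time smoothness I would bootstrap in time using the equation itself. Writing $\pa_t\tf=\sigma\pa_{\theta\theta}\tf-\kappa\pa_\theta(L[\tf]\tf)-\tfrac{\kappa}{2\pi}\pa_\theta L[\tf]$, the right-hand side is smooth in $\theta$ and continuous in $t$, so $\pa_t\tf$ is smooth in $\theta$; differentiating in $t$ and noting that the time-dependence enters only through $a(t),b(t)$, whose derivatives are integrals of $\pa_t\tf$ against fixed trigonometric functions, one obtains every mixed derivative and hence $\tf\in C^\infty(\mathbb{R}_+\times\s^1)$. Alternatively, once $\theta$-smoothness with locally bounded norms is known, interior parabolic regularity for the uniformly parabolic equation with smooth coefficients gives the same conclusion at once. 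I expect the principal difficulty to be purely organisational, namely verifying in the inductive energy estimate that, apart from the single top-order term removed by integration by parts, every contribution is genuinely of lower order and absorbable, rather than anything deep, precisely because the smoothness of $L[\tf]$ in $\theta$ renders the nonlocal term harmless.
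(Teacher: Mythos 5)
Your proposal is correct and follows essentially the same route as the paper: gain one derivative at a time by restarting the solution from an interior instant $s$ at which the previous step guarantees higher regularity, identify the restarted solution with the original one via uniqueness (Lemma \ref{lemma_continuity_initial}), iterate to obtain smoothness in $\theta$ for $t>0$, and then bootstrap in $t$ through the equation itself. The only difference is that you carry out the inductive higher-order energy estimates explicitly (exploiting that $L[\tf]$ is a trigonometric polynomial in $\theta$ with coefficients controlled by $\Vert \tf\Vert_{L_{\per}^2}$), whereas the paper delegates this step to the Remark following Lemma \ref{lemma:exist of tf}.
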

\begin{proof}
    Since $m>\frac{5}{2}$, $\tf$ belongs to $L_{\per}^2$. Thus, $\tf$ belongs to $L^2\left(\mathbb{R}_+;\dot H_{\per}^{1}\right)$. For all $t>0$, there exists $0<s<t$ such that $\tf(s)\in \dot H_{\per}^{1}$. So with the initial data $\tf(s)$, we are able to construct a solution, which belongs to $L^2\left([s,\infty);\dot H_{\per}^{2}\right)\cap H^1\left([s, \infty);\dot L_{\per}^{2}\right)$. Besides, this solution is also a weak solution in $L^2\left(\mathbb{R}_+;\dot H_{\per}^{1}\right)\cap H^1\left(\mathbb{R}_+;\dot H_{\per}^{-1}\right)$. By the uniqueness, it is indeed the solution $f$. We can repeat this procedure to improve the regularity such that $\tf\in C^1\left(\mathbb{R}_+;\dot H_{\per}^m\right)$ for any $m\in \mathbb{Z}_0$. Then we differentiate it with respect to the time variable and use the Sobolev embedding to see that it is smooth with respect to $t$ and $\theta$. 
\end{proof}
Now we are ready to provide the proof of Theorem \ref{thm:pde}.
\begin{proof}[Proof of Theorem \ref{thm:pde}:]$\ $
    By Lemma \ref{lemma:exist of tf}, we have known that there exists a weak solution in the space $L^2\left([0,T);\dot H_{\per}^1\right)\cap H^1\left([0,T);\dot H_{\per}^{-1}\right)$. Therefore, the existence of $f$ can be obtained in $L^2\left([0,T); H_{\per}^1\right)\cap H^1\left([0,T); H_{\per}^{-1}\right)$ with unit mass. The uniqueness can be gained from Lemma \ref{lemma_continuity_initial}. Since $m>\frac{5}{2}$, using the maximal principle in Proposition \ref{prop:strong solution}, the nonnegativity can be proved on $[0,T)$, which is used to improve the estimation in Proposition \ref{prop:uniform in time} to get the solution in the whole time interval. The smoothness is shown in Proposition \ref{prop: smoothness}.
\end{proof}
\section{Phase Transition for constant noise}
In this section, we study phase transition for space-homogeneous case with constant noise $D\equiv 1$. We first study equilibrium as well as the free energy, and we study the convergence toward equilibrium in a spatially homogeneous setting.
\subsection{Equilibrium and free energy}
In this section, we study the equilibrium and the free energy of system \eqref{IBVP_hom}. Since we have the global well-posedness and regularity of $f$, we assume that $f$ is smooth and positive for all $(t,\theta)\in \R_+\times \s^1$.

\vspace{0.15cm}
Note that the equation $\eqref{IBVP_hom}_1$ can be rewritten as 
\begin{equation}\label{Kinetic_constD_log}
    \pa_t f=\pa_\theta\left[f\left(\sigma \pa_\theta\log f-\kappa L[f]\right)\right],
\end{equation}
and we define the free energy as
\begin{equation}\label{free_energy}
    \mathcal{F}[f](t)=\sigma\int_{\s^1}f(t,\theta)\log f(t,\theta)d\theta-\frac{\kappa}{4} \int _{\s^1\times\s^1} f(t,\theta)f(t,\theta_*) \cos 2(\theta_*-\theta) d\theta_*d\theta.
\end{equation}
In the following proposition, we show that the energy functional in \eqref{free_energy} is non-increasing in time.
\begin{proposition}\label{prop_free_energy_const}
Let $f$ be a classical solution to \eqref{Kinetic_constD_log}. 
Then we have
\begin{equation*}
    \frac{d}{dt}\mathcal{F}[f](t)=-\int_{\s^1} f\left(\sigma \pa_\theta\log f-\kappa L[f]\right)^2 d\theta \\
    =:-\mathcal{D}[f](t)\leq 0,\quad t>0.
\end{equation*}    
\end{proposition}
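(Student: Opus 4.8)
The plan is to carry out the standard dissipation computation for a free energy along a gradient-flow-type equation, exploiting the flux form \eqref{Kinetic_constD_log}. First I would write the equation as a conservation law $\pa_t f = \pa_\theta J$ with flux $J := f\big(\sigma \pa_\theta \log f - \kappa L[f]\big) = \sigma \pa_\theta f - \kappa f L[f]$, so that the claimed dissipation is $\mathcal{D}[f] = \int_{\s^1} J^2/f \, d\theta$ and the target identity reads $\frac{d}{dt}\mathcal{F}[f] = -\int_{\s^1} (J/f)\, J \, d\theta$. All differentiations under the integral sign and integrations by parts below are legitimate because $f$ is smooth and strictly positive on $\R_+\times\s^1$ by Theorem \ref{thm:pde}, and the $2\pi$-periodicity on $\s^1$ removes boundary terms.

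Then I would differentiate the two pieces of $\mathcal{F}[f]$ separately. For the entropy part $\sigma \int_{\s^1} f \log f \, d\theta$, the product rule gives $\sigma \int_{\s^1} (\pa_t f)(\log f + 1)\, d\theta$, and the constant $+1$ drops out by mass conservation, $\int_{\s^1} \pa_t f \, d\theta = 0$. For the interaction part, the kernel $\cos 2(\theta_* - \theta)$ is symmetric under $\theta \leftrightarrow \theta_*$, so the two terms produced by the product rule coincide after relabeling the dummy variables and combine into $-\frac{\kappa}{2} \int_{\s^1} (\pa_t f)(\theta)\, C[f](\theta) \, d\theta$, where $C[f](\theta) := \int_{\s^1} \cos 2(\theta_* - \theta) f(\theta_*) \, d\theta_*$.

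The key algebraic observation is that $\pa_\theta C[f] = 2 L[f]$, since $\pa_\theta \cos 2(\theta_* - \theta) = 2 \sin 2(\theta_* - \theta)$ and $L[f]$ is exactly the corresponding sine-convolution from \eqref{def_L_D}. Collecting both contributions gives $\frac{d}{dt}\mathcal{F}[f] = \int_{\s^1} (\pa_t f)\big(\sigma \log f - \frac{\kappa}{2} C[f]\big) \, d\theta$. Substituting $\pa_t f = \pa_\theta J$ and integrating by parts transfers $\pa_\theta$ onto the bracket, which by the observation equals precisely $\sigma \pa_\theta \log f - \kappa L[f] = J/f$. Hence $\frac{d}{dt}\mathcal{F}[f] = -\int_{\s^1} J\,(J/f) \, d\theta = -\int_{\s^1} f(\sigma \pa_\theta \log f - \kappa L[f])^2 \, d\theta = -\mathcal{D}[f] \le 0$, as claimed.

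As for difficulty, there is no genuine analytic obstacle, since the regularity from Theorem \ref{thm:pde} justifies every manipulation. The only point demanding care is the bookkeeping in the interaction term: one must use the symmetry of the kernel to account correctly for the factor of two, and then match it against the primitive relation $\pa_\theta C[f] = 2L[f]$ so that the interaction contribution assembles exactly into the $\kappa L[f]$ appearing in the flux. This is precisely what the coefficient $\frac{\kappa}{4}$ in \eqref{free_energy} is calibrated for, and getting it right is what makes the right-hand side collapse into a perfect square rather than leaving a stray cross term.
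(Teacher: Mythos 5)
Your proof is correct and takes essentially the same route as the paper's: differentiate $\mathcal{F}[f]$, use the symmetry of the cosine kernel together with mass conservation to reduce to $\int_{\s^1}(\pa_t f)\bigl(\sigma\log f-\tfrac{\kappa}{2}\int_{\s^1}\cos 2(\theta_*-\theta)f\,d\theta_*\bigr)d\theta$, then substitute the flux form of the equation and integrate by parts using $\pa_\theta\int_{\s^1}\cos 2(\theta_*-\theta)f\,d\theta_*=2L[f]$. Your bookkeeping is in fact slightly more careful than the paper's, whose intermediate line writes the multiplier as $\sigma\log f-\kappa L[f]$ rather than its antiderivative; the final identity is the same.
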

\begin{proof}
We first differentiate \eqref{prop_free_energy_const} with respect to $t$ to obtain
\begin{align*}
 \frac{d}{dt}\mathcal{F}[f](t)= &\int_{\s^1} \pa_t f(t,\theta)\left(\sigma \log f(t,\theta)-\frac{\kappa}{2} \int _{\s^1} f(t,\theta_*) \cos 2(\theta_*-\theta) d\theta_*\right)d\theta\\
 =&\int_{\s^1} \frac{\pa}{\pa \theta}\left[f\left(\sigma \frac{\pa}{\pa\theta}\log f-\kappa L[f]\right)\right]\left(\sigma \log f-\kappa L[f]\right)d\theta\\
    =&-\int_{\s^1} f\left(\sigma \pa_\theta\log f-\kappa L[f]\right)^2 d\theta \leq 0.
\end{align*}
\end{proof}
Similar to \cite[Proposition 3.2]{frouvelle2012dynamics},  we can derive the LaSalle invariance principle which is stated below without a proof.
\begin{lemma}[The LaSalle invariance principle]
    Suppose that $f_0$ is a probability density function on $\s^1$, and we denote by $\mathcal{F}_{\infty}$ the constant limit of $\mathcal{F}[f](t)$ as $t\to \infty$, and let $f$ be the classical solution of \eqref{IBVP_hom}. Then we have 
    \[
    \mathcal{E}_\infty=\{f\in \mathit C^\infty(\s^1) |  \ \mathcal{D}[f]\equiv0, \ \mathcal{F}[f]=\mathcal{F}_{\infty}\}\neq\emptyset.
    \]
\end{lemma}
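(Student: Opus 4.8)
The plan is to produce an explicit element of $\mathcal{E}_\infty$ as a limit point of the trajectory $\{f(t)\}_{t>0}$, rather than to invoke abstract dynamical-systems machinery. First I would check that $\mathcal{F}[f](t)$ is bounded below uniformly in time: the interaction term is bounded by $\tfrac{\kappa}{4}$ because $|\cos 2(\theta_*-\theta)|\leq 1$ and $f(t,\cdot)$ has unit mass, while $\sigma\int_{\s^1}f\log f\,d\theta\geq -\sigma\log(2\pi)$ by Jensen's inequality. Together with the monotone decrease from Proposition~\ref{prop_free_energy_const}, this gives $\mathcal{F}[f](t)\downarrow\mathcal{F}_\infty$, and integrating $\tfrac{d}{dt}\mathcal{F}[f]=-\mathcal{D}[f]$ in time yields
\[
\int_0^\infty \mathcal{D}[f](t)\,dt=\mathcal{F}[f_0]-\mathcal{F}_\infty<\infty .
\]
Hence there is a sequence $t_n\to\infty$ with $\mathcal{D}[f](t_n)\to 0$.

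The key step is compactness of $\{f(t_n)\}$. Writing $g=\sqrt f$, a short computation recasts the dissipation as a perfect square,
\[
\mathcal{D}[f]=\int_{\s^1}\big(2\sigma\,\pa_\theta g-\kappa\,g\,L[f]\big)^2\,d\theta ,
\]
so that $4\sigma^2\|\pa_\theta g\|_{L^2_{\per}}^2$ differs from $\mathcal{D}[f]$ only by two cross and quadratic terms involving $L[f]$ and $\pa_\theta L[f]$. Because the kernel $\sin 2(\theta_*-\theta)$ is smooth, $L[f]$ and all its $\theta$-derivatives are bounded uniformly in time by the conserved mass $\int_{\s^1}f\,d\theta=1$; in particular $\|L[f]\|_\infty\le 1$ and $\|\pa_\theta L[f]\|_\infty\le 2$. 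Thus these correction terms are bounded, and $\mathcal{D}[f](t_n)\to 0$ forces the Fisher information $\|\pa_\theta g(t_n)\|_{L^2_{\per}}$ to remain bounded. Since $\|g(t_n)\|_{L^2_{\per}}^2=1$ as well, $\{g(t_n)\}$ is bounded in $H^1_{\per}$, and the compact one-dimensional embedding $H^1_{\per}\hookrightarrow C_{\per}$ gives a subsequence with $g(t_{n_k})\to g_\infty$ strongly in $C_{\per}$ and weakly in $H^1_{\per}$. Setting $f_\infty:=g_\infty^2$, the uniform convergence shows $f_\infty\geq 0$ and $\int_{\s^1}f_\infty\,d\theta=1$, so $f_\infty$ is again a probability density.

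It then remains to pass to the limit in the two defining properties. For the energy, the uniform $L^\infty$ bound coming from $g\in H^1_{\per}$ together with uniform convergence $f(t_{n_k})\to f_\infty$ makes both the entropy and the interaction integrands converge uniformly, so $\mathcal{F}[f(t_{n_k})]\to\mathcal{F}[f_\infty]$; since the left-hand side tends to $\mathcal{F}_\infty$, we obtain $\mathcal{F}[f_\infty]=\mathcal{F}_\infty$. For the dissipation, $\pa_\theta g(t_{n_k})\rightharpoonup\pa_\theta g_\infty$ weakly in $L^2_{\per}$ while $g(t_{n_k})L[f(t_{n_k})]\to g_\infty L[f_\infty]$ strongly in $L^2_{\per}$ (using the strong convergence of $f(t_{n_k})$ to control $L[\cdot]$), so the integrand $2\sigma\pa_\theta g-\kappa g L[f]$ converges weakly in $L^2_{\per}$; weak lower semicontinuity of the norm gives $\mathcal{D}[f_\infty]\leq\liminf_k\mathcal{D}[f(t_{n_k})]=0$, whence $\mathcal{D}[f_\infty]=0$. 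Finally, $\mathcal{D}[f_\infty]=0$ reads $2\sigma\pa_\theta g_\infty=\kappa g_\infty L[f_\infty]$, a linear ODE with coefficient $L[f_\infty]$ that is automatically $C^\infty$ (an $L^1$ function convolved against a smooth kernel); a standard bootstrap then yields $g_\infty\in C^\infty(\s^1)$ and hence $f_\infty\in C^\infty(\s^1)$. Therefore $f_\infty\in\mathcal{E}_\infty$, proving $\mathcal{E}_\infty\neq\emptyset$.

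The main obstacle is the compactness step. A naive $L^2$ or $L^\infty$ energy estimate only produces bounds on $f(t)$ that grow exponentially in $t$, exactly as in Proposition~\ref{prop:strong solution} and Proposition~\ref{prop:uniform in time}, and these are useless as $t_n\to\infty$. What rescues the argument is the observation that the dissipation functional already encodes the Fisher information, combined with the smoothing nature of the nonlocal force $L[f]$, whose sup-norm and those of its derivatives depend only on the conserved mass and not on the regularity of $f$; this delivers a uniform-in-time $H^1_{\per}$ bound along the near-equilibrium times $t_n$ for free from $\mathcal{D}[f](t_n)\to 0$.
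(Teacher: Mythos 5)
Your argument is correct, but note that the paper itself gives no proof of this lemma: it is stated ``without a proof'' by analogy with Proposition~3.2 of \cite{frouvelle2012dynamics}. So the useful comparison is with that template. The cited approach obtains precompactness of the trajectory from uniform\-/in\-/time higher\-/order Sobolev bounds (parabolic smoothing combined with boundedness of the order parameter), and then extracts a limit point on which the dissipation vanishes. You instead get compactness directly from the dissipation: writing $g=\sqrt f$ and
\[
\mathcal{D}[f]=\int_{\s^1}\bigl(2\sigma\,\pa_\theta g-\kappa\,g\,L[f]\bigr)^2\,d\theta,
\]
the cross term equals $2\sigma\kappa\int_{\s^1}f\,\pa_\theta L[f]\,d\theta$ after integration by parts and the quadratic term is $\kappa^2\int_{\s^1}f\,L[f]^2\,d\theta$, both controlled by the conserved mass alone, so $\|\pa_\theta g(t_n)\|_{L_{\per}^2}$ is bounded along any sequence with $\mathcal{D}[f](t_n)\to 0$. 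This is more elementary than the reference's route and correctly sidesteps the fact that the paper's own a priori bounds such as \eqref{est of tilde f v2} grow exponentially in time and are useless for compactness; the remaining limit passages (uniform convergence of $f\log f$ and of the interaction term, weak lower semicontinuity for $\mathcal{D}$, and the ODE bootstrap for smoothness) are all sound. One small point you should make explicit: since $2\sigma\pa_\theta g_\infty=\kappa g_\infty L[f_\infty]$ is a linear ODE with continuous coefficient, uniqueness forces $g_\infty$ to be either identically zero or everywhere positive, and $\int_{\s^1}g_\infty^2\,d\theta=1$ rules out the former; hence $f_\infty>0$, which is what makes $\pa_\theta\log f_\infty$ (and thus the paper's stated form of $\mathcal{D}[f_\infty]$) genuinely well defined rather than only its $\sqrt f$ reformulation.
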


\vspace{0.15cm}
In the statement of the LaSalle invariance principle, we may see heuristically that there is a convergence of $\mathcal F[f](t)$. Thus, the equilibrium  should satisfy the equation $\mathcal D[f]=0$. This leads to the von-Mises type equilibrium of the following form:
\begin{equation}\label{von_Mises_eta_Phi}
    M_{\eta, \Phi}(\theta)=\frac{1}{Z(\eta)}\exp\big(\eta\cos2(\Phi-\theta)\big),
\end{equation}
where $Z$ is the normalization constant such that $M_{\eta, \Phi}$ is a probability density function on $\s^1$:
\[
Z(\eta) = \int_{\mathbb{S}^1}\exp\big(\eta\cos2(\Phi-\theta)\big)\,d\theta.
\]
Note that $Z$ is independent of $\Phi$ due to the periodicity of trigonometric functions.
By Proposition \ref{prop_free_energy_const}, the necessary condition for $M_{\eta,\Phi}$ to be an equilibrium in $\mathcal{E}_\infty$ is
\[
\sigma \pa_\theta\log M_{\eta,\Phi}(\theta)-\kappa L[M_{\eta,\Phi}](\theta)=0 \quad \forall \ \theta\in\s^1.
\]
This results in the so-called ``\textit{compatibility condition}":

\begin{equation} \label{new_14}
\int_{\s^1}M_{\eta,\Phi}(\theta)\cos 2(\theta-\Phi)\,d\theta = \frac{2\sigma}{\kappa}\eta.
\end{equation}
Therefore, without loss of generality, we set $\Phi = 0$, and use the following form in the sequal:
\begin{equation*}
    M_{\eta}(\theta)=M_{\eta,0}(\theta)=\frac{1}{Z(\eta)}\exp\big(\eta\cos2\theta\big).
\end{equation*}
In this case, the compatibility condition \eqref{new_14} becomes
\begin{equation}\label{compatibility}
\int_{\s^1}M_{\eta}(\theta)\cos 2\theta\,d\theta = \frac{2\sigma}{\kappa}\eta.
\end{equation}
In what follows, we study the large-time behavior and the phase transition of the system with respect to the parameters $\sigma$ and $\kappa$. 
In other words, we analyze the von-Mises type equilibria $M_{\eta}$ generated by the nonnegative triple solutions $(\sigma, \kappa, \eta)$ to the compatibility condition \eqref{compatibility}, and their convergence behaviors. We first introduce the ``\textit{weighted average}'' associated to $M_{\eta}$ given by 
\[
\langle \gamma(\cdot)\rangle_M \equiv\int_{\s^1}\gamma(\theta) M_{\eta}(\theta)  d\theta,
\]
where $\gamma(\theta)$ is some given periodic function on $\s^1$. 
Then we have the following lemma.
\begin{lemma}\label{lemma_kappa_sigma}
    Suppose that parameters $\sigma$, $\kappa$ and $\eta$ satisfy 
    \[
    0<\frac{\sigma}{\kappa}<\frac{1}{4},\quad\eta>0,
    \]
    and the compatibility condition \eqref{compatibility} holds. Then we have
    \begin{equation*}
        \frac{2\sigma}{\kappa}=\left\langle \sin^2 2\theta \right\rangle _M>\left\langle \cos^2 2\theta \right\rangle _M-\left\langle \cos 2\theta \right\rangle _M^2.
    \end{equation*}
\end{lemma}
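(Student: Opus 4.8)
The plan is to treat the equality and the strict inequality by two different mechanisms: the equality follows from a periodicity-driven integration-by-parts identity satisfied by $M_\eta$, while the inequality reduces to the strict monotonicity of $\eta \mapsto \la \sin^2 2\theta\ra_M$, i.e. to a single correlation inequality. For the equality, I would start from $\int_{\s^1}\pa_\theta\!\left(\sin 2\theta\, e^{\eta\cos 2\theta}\right)d\theta = 0$, which holds because the integrand is $2\pi$-periodic. Expanding the derivative gives $2\cos 2\theta\, e^{\eta\cos 2\theta} - 2\eta\sin^2 2\theta\, e^{\eta\cos 2\theta}$, so dividing by $2Z(\eta)$ yields the moment relation $\la\cos 2\theta\ra_M = \eta\,\la\sin^2 2\theta\ra_M$, valid for every $\eta$. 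The compatibility condition \eqref{compatibility} reads $\la\cos 2\theta\ra_M = \tfrac{2\sigma}{\kappa}\eta$; comparing the two identities and cancelling the factor $\eta > 0$ gives $\tfrac{2\sigma}{\kappa} = \la\sin^2 2\theta\ra_M$, the claimed equality.

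For the inequality, note first that the right-hand side is, by definition, $\mathrm{Var}_M(\cos 2\theta) = \la\cos^2 2\theta\ra_M - \la\cos 2\theta\ra_M^2$. Writing $m(\eta) = \la\cos 2\theta\ra_M$ and $s(\eta) = \la\sin^2 2\theta\ra_M$, the differentiation formula $\tfrac{d}{d\eta}\la A\ra_M = \mathrm{Cov}_M(A,\cos 2\theta)$ (which comes from $\tfrac{d}{d\eta}M_\eta = (\cos 2\theta - m)M_\eta$) applied to $A = \cos 2\theta$ gives $m'(\eta) = \mathrm{Var}_M(\cos 2\theta)$. Since the integration-by-parts identity above says $m = \eta s$, we get $m' = s + \eta s'$, hence the desired inequality $s > m'$ is \emph{equivalent} to $\eta s' < 0$, i.e. to $s'(\eta) < 0$. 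Because $s' = \mathrm{Cov}_M(\sin^2 2\theta,\cos 2\theta) = -\mathrm{Cov}_M(\cos^2 2\theta,\cos 2\theta)$, the whole inequality collapses to the single positivity statement $\mathrm{Cov}_M(\cos^2 2\theta,\cos 2\theta) > 0$; this reduction uses only $\eta > 0$.

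The remaining task, which I expect to be the main obstacle, is proving $\mathrm{Cov}_M(\cos^2 2\theta,\cos 2\theta) > 0$. I would push $M_\eta$ forward under $\theta \mapsto y = \cos 2\theta$, so that the law of $y$ on $(-1,1)$ has density proportional to $e^{\eta y}\rho(y)$ with the even weight $\rho(y) = \left(\pi\sqrt{1-y^2}\right)^{-1}$. Splitting $e^{\eta y} = \cosh(\eta y) + \sinh(\eta y)$ and discarding odd integrands against the even weight, I would introduce the symmetric probability measure $\nu$ with density proportional to $\cosh(\eta y)\rho(y)$ and use $\sinh = \tanh\cosh$ to recast the covariance as $\mathrm{Cov}_\nu\!\left(y^2,\, y\tanh(\eta y)\right)$. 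Both $h(y) = y^2$ and $g(y) = y\tanh(\eta y)$ are even and strictly increasing in $|y|$; hence, writing $\mathrm{Cov}_\nu(h,g) = \tfrac12\,\mathbb{E}\big[(h(y)-h(y'))(g(y)-g(y'))\big]$ for independent $y,y' \sim \nu$, the integrand is nonnegative (both factors share the sign of $|y|-|y'|$) and strictly positive on a set of positive $\nu\otimes\nu$-measure, so the covariance is strictly positive by Chebyshev's correlation inequality.

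As a cross-check that confirms the computation, in terms of modified Bessel functions one has $\la\cos 2\theta\ra_M = I_1(\eta)/I_0(\eta)$ and $\la\cos 4\theta\ra_M = I_2(\eta)/I_0(\eta)$, and an elementary rearrangement (using $\sin^2 2\theta = 1 - \cos^2 2\theta$ and $\cos 4\theta = 2\cos^2 2\theta - 1$) shows the target inequality is exactly the Turán inequality $I_1(\eta)^2 > I_0(\eta)I_2(\eta)$; the correlation argument above provides a self-contained proof of it. I would note that the standing hypothesis $\sigma/\kappa < 1/4$ is not needed for the inequality itself, which holds for every $\eta > 0$; it enters only through the existence of a positive root $\eta$ of the compatibility condition \eqref{compatibility}.
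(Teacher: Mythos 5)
Your proof is correct; the equality half coincides with the paper's (the same integration by parts yields $\langle \cos 2\theta\rangle_M=\eta\langle\sin^2 2\theta\rangle_M$, which is then combined with \eqref{compatibility}), but the inequality half is genuinely different. The paper introduces the auxiliary function
$k(\eta)=Z^2(\eta)\left(\left\langle \cos^22\theta-\sin^22\theta\right\rangle_M-\langle\cos 2\theta\rangle_M^2\right)$, notes $k(0)=0$, and computes $k'(\eta)=-\tfrac{2}{\eta}Z^2(\eta)\left\langle 1-2\sin^22\theta\right\rangle_M$, which it evaluates as $-\tfrac{2}{\eta}Z^2(\eta)\left(1-\tfrac{4\sigma}{\kappa}\right)<0$ using the hypothesis $\tfrac{\sigma}{\kappa}<\tfrac14$; note that this last substitution invokes $\langle\sin^22\theta\rangle_{M_\eta}=\tfrac{2\sigma}{\kappa}$ at a generic $\eta$ along the interval of integration, not just at the compatible value, a delicacy your argument avoids entirely. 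You instead use the exponential-family identities $m'=\mathrm{Var}_M(\cos2\theta)$ and $m=\eta s$ to reduce the inequality to $s'(\eta)<0$, i.e.\ to $\mathrm{Cov}_M\!\left(\cos^22\theta,\cos2\theta\right)>0$, which you prove by pushing forward to $y=\cos2\theta$, symmetrizing with $\cosh/\sinh$, and applying the Chebyshev correlation inequality to two even functions increasing in $|y|$; your Bessel-function cross-check (the inequality is exactly the Tur\'an inequality $I_1^2>I_0I_2$) is also accurate. What your route buys: the inequality is established for every $\eta>0$ with no restriction on $\tfrac{\sigma}{\kappa}$ (the hypothesis $0<\tfrac{\sigma}{\kappa}<\tfrac14$ enters only through the existence of a positive compatible $\eta$), the compatibility condition is used only once and only at the relevant $\eta$, and the monotonicity $s'<0$ you obtain is precisely what makes $\beta$ strictly decreasing in the proof of the phase-transition theorem. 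What the paper's route buys is brevity: a single differentiation and a sign check, at the cost of a weaker, parameter-dependent statement.
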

\begin{proof}
By integration by parts, we have
\begin{align*}
&\int_{\s^1}\exp\left(\eta\cos2\theta\right)\sin^2 2\theta d\theta \\
&\hspace{0.5cm}= -\frac{1}{2}\int_{\s^1}\exp\left(\eta\cos2\theta\right)\sin 2\theta \,d\cos 2\theta 
= -\frac{1}{2\eta}\int_{\s^1}\sin 2\theta \,d\exp\left(\eta\cos2\theta\right) \\
&\hspace{0.5cm}=\frac{1}{2\eta}\int_{\s^1}\exp\left(\eta\cos2\theta\right) \,d\sin 2\theta 
=\frac{1}{\eta}\int_{\s^1}\exp\left(\eta\cos2\theta\right) \cos 2\theta\,d\theta.
\end{align*}
Thanks to the compatibility condition $\eqref{compatibility}$, we have
\[
\left\langle \sin^2 2\theta \right\rangle _M = \frac{1}{\eta}\langle \cos 2\theta \rangle _M = \frac{2\sigma}{\kappa}.
\]
Next, we consider the function $k(\eta)$ given by
\[
k(\eta) = \int_{\s^1}e^{\eta\cos2\theta}\left(\cos^2 2\theta-\sin^22\theta\right)\,d\theta\int_{\s^1}e^{\eta\cos2\theta}\,d\theta 
- \left(\int_{\s^1}e^{\eta\cos2\theta}\cos 2\theta \,d\theta\right)^2.
\]
Note that $k(0)=0$, and by direct calculation, we have
\begin{align*}
k'(\eta) = &\int_{\s^1}e^{\eta\cos2\theta}\cos 2\theta\left(\cos^2 2\theta-\sin^22\theta\right)\,d\theta\int_{\s^1}e^{\eta\cos2\theta}\,d\theta \\
&+\int_{\s^1}e^{\eta\cos2\theta}\left(\cos^2 2\theta-\sin^22\theta\right)\,d\theta\int_{\s^1}e^{\eta\cos2\theta}\cos2\theta\,d\theta \\
&- 2\int_{\s^1}e^{\eta\cos2\theta}\cos 2\theta \,d\theta
\int_{\s^1}e^{\eta\cos2\theta}\cos^2 2\theta \,d\theta\\
=&-2\int_{\s^1}e^{\eta\cos2\theta}\cos 2\theta\sin^22\theta\,d\theta\int_{\s^1}e^{\eta\cos2\theta}\,d\theta \\
=&\frac{1}{\eta}\int_{\s^1}\cos 2\theta\sin2\theta\,de^{\eta\cos2\theta}\int_{\s^1}e^{\eta\cos2\theta}\,d\theta \\
=&-\frac{2}{\eta}\int_{\s^1}e^{\eta\cos2\theta}\left(1-2\sin^22\theta\right)\,d\theta\int_{\s^1}e^{\eta\cos2\theta}\,d\theta \\
=&-\frac{2}{\eta}Z^2(\eta)\left\langle1-2\sin^22\theta\right\rangle_M\\
=&-\frac{2}{\eta}Z^2(\eta)\left(1 - 4\frac{\sigma}{\kappa}\right)<0 \quad \text{ since } \; 0<\frac{\sigma}{\kappa}<\frac{1}{4}.
\end{align*}
This implies 
\[
k(\eta)<k(0)=0,\quad \eta>0.
\]
and we have
\[
\left\langle \sin^2 2\theta \right\rangle _M>\left\langle \cos^2 2\theta \right\rangle _M-\langle \cos 2\theta \rangle _M^2.
\]
\end{proof}

We are now ready to state our main theorem on the phase transition of the equilibrium. 
\begin{theorem}[Phase transition]\label{thm_unif_eta}
There are only two types of nonnegative solutions to the compatibility condition \eqref{compatibility}:
    \begin{enumerate}
        \item If $\ \displaystyle\frac{\sigma}{\kappa}\geq \frac{1}{4}$, then the only solution is $\eta=0$, 
        and the von-Mises function reduces to a constant function $\displaystyle M_0(\theta)=f_{\infty}^c (\theta)\equiv \frac{1}{2\pi}$.
        \item  If $\ 0 < \dfrac{\sigma}{\kappa}<\dfrac{1}{4}$, then there are two solutions: one is the trivial solution $\eta=0$, and the other is $\eta>0$. 
        \end{enumerate}
\end{theorem}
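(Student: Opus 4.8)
The plan is to collapse the compatibility condition \eqref{compatibility} into a single scalar equation and count its roots. Writing $\lambda:=\frac{2\sigma}{\kappa}$ and $g(\eta):=\la\cos 2\theta\ra_M=\int_{\s^1}M_\eta(\theta)\cos 2\theta\,d\theta$, the condition reads $g(\eta)=\lambda\eta$. Because $\int_{\s^1}\cos 2\theta\,d\theta=0$ forces $g(0)=0$, the trivial root $\eta=0$ is always present, so the real task is to count the \emph{positive} roots; dividing by $\eta$, these are exactly the solutions of $h(\eta)=\lambda$, where $h(\eta):=g(\eta)/\eta$. The whole theorem then reduces to understanding the range and monotonicity behaviour of $h$.

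The first step is to put $h$ in a usable form. The integration-by-parts identity already established inside the proof of Lemma \ref{lemma_kappa_sigma}, namely $\la\cos 2\theta\ra_M=\eta\,\la\sin^2 2\theta\ra_M$, holds for every $\eta>0$ and gives $h(\eta)=\la\sin^2 2\theta\ra_M$. Hence $h$ extends to a smooth function on $[0,\infty)$ with $h(0)=\la\sin^2 2\theta\ra_{M_0}=\tfrac12$, while the crude bound $g(\eta)\le 1$ yields $h(\eta)\le 1/\eta\to 0$ as $\eta\to\infty$. The crucial input is the strict bound $h(\eta)<\tfrac12$ for all $\eta>0$. Since $\la\sin^2 2\theta\ra_M+\la\cos^2 2\theta\ra_M=1$, this is equivalent to $\la\cos 4\theta\ra_M>0$, i.e. to $\int_{\s^1}e^{\eta\cos 2\theta}\cos 4\theta\,d\theta>0$. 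I would prove this elementarily: the substitution $\phi=2\theta$ turns the integral into $\int_0^{2\pi}e^{\eta\cos\phi}\cos 2\phi\,d\phi$, and expanding $e^{\eta\cos\phi}=\sum_k\frac{\eta^k}{k!}\cos^k\phi$ one checks that $\int_0^{2\pi}\cos^k\phi\cos 2\phi\,d\phi\ge 0$ for every $k$, strictly positive for even $k\ge 2$; since $\eta>0$ the series is strictly positive (equivalently, this is the positivity of the modified Bessel function $\mathcal I_2(\eta)$).

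With these facts the two regimes separate cleanly. In the supercritical/critical range $\sigma/\kappa\ge\tfrac14$, i.e. $\lambda\ge\tfrac12$, every $\eta>0$ satisfies $h(\eta)<\tfrac12\le\lambda$, so $h(\eta)=\lambda$ has no positive root; thus $\eta=0$ is the only solution, with $M_0\equiv\frac{1}{2\pi}$, giving case (1). In the subcritical range $0<\sigma/\kappa<\tfrac14$, i.e. $0<\lambda<\tfrac12$, existence of a positive root follows from the intermediate value theorem applied to the continuous function $h$ on $(0,\infty)$, using $h(0^+)=\tfrac12>\lambda$ and $h(\eta)\to 0<\lambda$.

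Uniqueness of this positive root is the main obstacle, and it is precisely where Lemma \ref{lemma_kappa_sigma} is used. Set $F(\eta):=g(\eta)-\lambda\eta$, so positive solutions are the positive zeros of $F$, and $F'(\eta)=g'(\eta)-\lambda$ with $g'(\eta)=\la\cos^2 2\theta\ra_M-\la\cos 2\theta\ra_M^2$. At any positive zero $\eta^*$ one has $\lambda=h(\eta^*)=\la\sin^2 2\theta\ra_M$, so Lemma \ref{lemma_kappa_sigma} gives exactly $\lambda=\la\sin^2 2\theta\ra_M>\la\cos^2 2\theta\ra_M-\la\cos 2\theta\ra_M^2=g'(\eta^*)$, that is $F'(\eta^*)<0$. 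Every positive zero of $F$ is therefore a strict down-crossing, and in particular isolated. If there were two of them, take two consecutive positive zeros $\eta_1<\eta_2$: since $F'(\eta_1)<0$ we have $F<0$ just to the right of $\eta_1$, hence $F<0$ throughout $(\eta_1,\eta_2)$; but $F(\eta_2)=0$ with $F'(\eta_2)<0$ forces $F>0$ just to the left of $\eta_2$, a contradiction. Hence there is exactly one positive root, completing case (2). I expect the verification of $h<\tfrac12$ and the bookkeeping of the down-crossing argument to be the only delicate points, both tightly controlled by the identity $h=\la\sin^2 2\theta\ra_M$ and by Lemma \ref{lemma_kappa_sigma}.
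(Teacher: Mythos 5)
Your proof is correct, and its skeleton coincides with the paper's: both reduce the compatibility condition to the scalar equation $h(\eta)=2\sigma/\kappa$ for the function $h(\eta)=\la \cos 2\theta\ra_M/\eta=\la\sin^2 2\theta\ra_M$ (which the paper calls $\beta$), both use $h(0^+)=\tfrac12$ and $h(\eta)\to 0$, and both invoke Lemma \ref{lemma_kappa_sigma} to get the strict derivative sign at any positive solution. The one genuine difference is your treatment of the regime $\sigma/\kappa\geq\tfrac14$: you prove the \emph{unconditional} strict bound $h(\eta)<\tfrac12$ for all $\eta>0$ by reducing it to $\la\cos 4\theta\ra_M>0$, i.e.\ to the positivity of $\int_0^{2\pi}e^{\eta\cos\phi}\cos 2\phi\,d\phi$ (the modified Bessel function $I_2$), checked term by term in the series expansion. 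The paper instead asserts $\beta'(\eta)<0$ for all $\eta>0$ in this case, but its formula $\beta'(\eta)=\frac{1}{\eta}[1-\frac{4\sigma}{\kappa}-(\frac{2\sigma}{\kappa}\eta)^2]$ is obtained only after substituting the compatibility condition, and Lemma \ref{lemma_kappa_sigma} is likewise stated under that hypothesis, so strictly speaking the paper controls the sign of $\beta'$ only at solution points. Your version closes that small logical gap: the unconditional bound $h<\tfrac12$ immediately rules out positive roots when $2\sigma/\kappa\geq\tfrac12$ without any monotonicity claim, and your explicit down-crossing argument for uniqueness (every positive zero of $F(\eta)=\la\cos2\theta\ra_M-\frac{2\sigma}{\kappa}\eta$ has $F'<0$, so two consecutive zeros are impossible) uses Lemma \ref{lemma_kappa_sigma} exactly where it is actually available. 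In short: same strategy, but a more carefully localized use of the key lemma plus one extra elementary input ($I_2(\eta)>0$) that makes the subcritical case airtight.
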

\begin{proof}
Note that $\eta = 0$ with arbitrary $\frac{\sigma}{\kappa} > 0$ is always a solution to the compatibility condition \eqref{compatibility}.
Next we will show that this is the only solution for $\frac{\sigma}{\kappa}\geq\frac14$, and there is another positive solution $\eta$ for $0 < \frac{\sigma}{\kappa}<\frac14$. Define $\beta=\beta(\eta)$ as follows:
\begin{equation*}
\beta(\eta):=\displaystyle\left\{
\begin{array}{cl}
\displaystyle \frac{\langle \cos 2\theta \rangle_M}{\eta}, &\text{ if }\eta\in(0,\infty),\\
\displaystyle \frac{\int_{\s^1}\cos^22\theta d\theta}{\int_{\s^1}d\theta}=\frac{1}{2}, &\text{ if } \eta = 0.
\end{array}\right.
\end{equation*}
One can show that $\beta$ is continuous for $\eta \in [0, \infty)$,
and for given $\frac{\sigma}{\kappa}$, the compatibility condition \eqref{compatibility} is equivalent to that the following equation:
\begin{equation}\label{compatibility_beta}
\beta(\eta) = \frac{\langle \cos 2\theta \rangle_M}{\eta}= \frac{2\sigma}{\kappa}, \quad  \eta > 0,   
\end{equation}
is solvable for a positive solutions $\eta$ for some given $\frac{\sigma}{\kappa}$.
We study the derivative $\beta'(\eta)$ 
to obtain the correspondence between $\eta$ and $\frac{\sigma}{\kappa}$: 
\begin{align*}
    \beta'(\eta)=&\frac{1}{\eta^2}\left(\eta\left\langle \cos^2 2\theta\right\rangle_M - \langle \cos 2\theta\rangle_M\langle 1+\eta \cos 2\theta\rangle_M\right)\\
    =&\frac{1}{\eta^2}\left(\eta-2\langle \cos 2\theta\rangle_M - \eta\langle \cos 2\theta\rangle_M^2\right) \\
    =& \frac{1}{\eta}\left[1 - \frac{4\sigma}{\kappa} - \left(\frac{2\sigma}{\kappa}\eta\right)^2\right] \quad \text{ due to \eqref{compatibility_beta}.}
\end{align*}
Next, we consider two cases depending on $\frac{\sigma}{\kappa}$:

\noindent$\bullet$ Case A (subcritical case): If $\displaystyle\frac{\sigma}{\kappa}\geq \frac{1}{4}$, then we must have $\beta'(\eta)<0$ for all $\eta>0$. This results in 
\[
\beta(\eta)<\beta(0)=\displaystyle\frac{1}{2}\leq \displaystyle\frac{2\sigma}{\kappa}.
\]
This shows that there is only the trivial solution $\eta=0$ for the subcritical case $\displaystyle\frac{\sigma}{\kappa}\geq \frac{1}{4}$.

\noindent$\bullet$ Case B (supercritical case): If $\displaystyle 0 < \frac{\sigma}{\kappa}< \frac{1}{4}$, by Lemma \ref{lemma_kappa_sigma}, we have
\begin{align*}
    \beta'(\eta)& = \frac{1}{\eta^2}\left[\eta\left(\left\langle \cos^2 2\theta\right\rangle_M - \langle \cos 2\theta\rangle_M^2\right) - \langle \cos 2\theta\rangle_M\right]\\
    &< \frac{1}{\eta^2}\left[\eta\left\langle \sin^2 2\theta\right\rangle_M - \langle \cos 2\theta\rangle_M\right] = 0.
\end{align*}
Meanwhile, by the definition of $\beta=\beta(\eta)$, we have
\[
\lim_{\eta\to \infty} \beta(\eta) = 0.
\]
Thus, there exists a unique positive solution $\eta>0$ if and only if $0<\frac{\sigma}{\kappa}<\frac{1}{4}$. 
\end{proof}

\subsection{Convergence in the subcritical regime}

In this section, we focus on the large-noise case and study the convergence of the classical solution to \eqref{IBVP_hom} toward the unique steady state. Combining with the LaSalle invariance principle, the convergence is obtained using the free energy, since the invariant measure is unique. To study the convergence rate, we consider both in $L_2$ case and the evolution of relative entropy. We first introduce the relative entropy of two probability density functions $f$ and $h$ where $f$ is absolutely continuous with respect to $h$:
\begin{equation}\label{relative entropy}
    \mathcal{H}\left[f|h\right](t):=\int_{\s^1}f(t,\theta)\log\left(\frac{f(t,\theta)}{h(t,\theta)}\right)d\theta.
\end{equation}
\begin{proposition}
    Let $f$ be the classical solution of \eqref{IBVP_hom}. Then the following assertions hold:
    \begin{enumerate}
        \item [(i)]If $\frac{\sigma}{\kappa}>2$, then the relative entropy decays to zero exponentially fast:
        \begin{equation*}
        \mathcal{H}\left[f|f_{\infty}^c\right](t)\leq e^{-2\kappa\left(\frac{\sigma}{\kappa}-2\right)t}\mathcal{H}\left[f_0|f_{\infty}^c\right]\quad \text{as} \quad t\to\infty.
        \end{equation*}
         \item [(ii)]If $\frac{\sigma}{\kappa}>3$, then $\Vert f(t)-f_{\infty}^{\rm c}\Vert_{L_{\per}^2}$ decays to zero exponentially fast:
         \begin{equation*}
             \Vert f(t)-f_{\infty}^{\rm c}\Vert_{L_{\per}^2}\leq e^{-3\kappa\left(\frac{\sigma}{\kappa}-\frac{1}{3}\right)t}\left\Vert f_0-f_{\infty}^{\rm c}\right\Vert_{L_{\per}^2}\quad \text{as} \quad t\to\infty.
         \end{equation*}
    \end{enumerate}
\end{proposition}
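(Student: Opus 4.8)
Both estimates measure convergence to the uniform state $f_\infty^c\equiv\frac{1}{2\pi}$, and my plan is to exploit one structural feature common to both. Since $\sin 2(\theta_*-\theta)=\sin 2\theta_*\cos 2\theta-\cos 2\theta_*\sin 2\theta$, the alignment force is a pure second harmonic,
\[
L[f](t,\theta)=S(t)\cos 2\theta-C(t)\sin 2\theta,\qquad C(t):=\int_{\s^1}\cos 2\theta\,f\,d\theta,\quad S(t):=\int_{\s^1}\sin 2\theta\,f\,d\theta,
\]
so every interaction integral reduces to an expression in the two order parameters $S,C$. I would also record the time-uniform bound $\|L[f](t)\|_{\infty}\le\int_{\s^1}f\,d\theta=1$, valid because $f$ is a probability density; this is the ingredient that lets me treat the nonlinearity with no near-equilibrium hypothesis.

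For (i), I differentiate $\mathcal H[f|f_\infty^c]$. Using mass conservation together with the divergence form \eqref{Kinetic_constD_log}, integration by parts gives
\[
\frac{d}{dt}\mathcal H[f|f_\infty^c]=-\sigma\,I[f]+\kappa\int_{\s^1}L[f]\,\partial_\theta f\,d\theta,\qquad I[f]:=\int_{\s^1}\frac{(\partial_\theta f)^2}{f}\,d\theta.
\]
Integrating the last term by parts once more and using the harmonic structure turns it into $2\kappa(S^2+C^2)$; then, writing $S$ and $C$ as $\partial_\theta$-weighted integrals and applying Cauchy--Schwarz in $L^2(f\,d\theta)$ (with $\int\cos^2 2\theta\,f+\int\sin^2 2\theta\,f\le 1$) bounds this by a fixed fraction of $I[f]$, yielding $\frac{d}{dt}\mathcal H\le-(\sigma-c\kappa)I[f]$ for an explicit $c$. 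The proof then closes by an entropy--entropy-dissipation (logarithmic-Sobolev-type) inequality $I[f]\ge\lambda\,\mathcal H[f|f_\infty^c]$ on the circle, which I would derive directly rather than quote, followed by Gronwall; the threshold $\sigma/\kappa>2$ and the rate come out of the bookkeeping of $c$ and $\lambda$.

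For (ii), I set $g:=\tilde f=f-f_\infty^c$, which solves \eqref{GF-P}, and test against $g$:
\[
\tfrac12\tfrac{d}{dt}\|g\|_{L^2_{\per}}^2+\sigma\|\partial_\theta g\|_{L^2_{\per}}^2=\kappa\big(L[g]g,\partial_\theta g\big)+\tfrac{\kappa}{2\pi}\big(L[g],\partial_\theta g\big).
\]
The linear term is evaluated exactly via the harmonic structure, $\tfrac{\kappa}{2\pi}(L[g],\partial_\theta g)=\tfrac{\kappa}{\pi}(S^2+C^2)\le\kappa\|g\|_{L^2_{\per}}^2$ by Parseval. The apparently cubic term I estimate by Cauchy--Schwarz using the uniform bound, $\kappa(L[g]g,\partial_\theta g)\le\kappa\|L[g]\|_\infty\|g\|_{L^2_{\per}}\|\partial_\theta g\|_{L^2_{\per}}\le\kappa\|g\|_{L^2_{\per}}\|\partial_\theta g\|_{L^2_{\per}}$, so it is in fact only linear-times-gradient. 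Splitting the dissipation between a Young absorption of this term and a Poincaré lower bound $\|\partial_\theta g\|_{L^2_{\per}}^2\ge\|g\|_{L^2_{\per}}^2$ produces $\tfrac{d}{dt}\|g\|_{L^2_{\per}}^2\le-2\mu\|g\|_{L^2_{\per}}^2$ with $\mu>0$ once $\sigma/\kappa$ exceeds the stated threshold, and Gronwall finishes.

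The crux in each part is the interaction term. In (i) the difficulty is analytic rather than algebraic: I must make the competition between the destabilizing $(S^2+C^2)$ contribution and the entropy dissipation quantitative, which forces sharp constants both in the Cauchy--Schwarz step and in the entropy inequality on $\s^1$. In (ii) the delicate point is to prevent the quadratic coupling $L[g]g$ from demanding a smallness assumption on the data; the resolution is precisely the time-uniform bound $\|L[f]\|_\infty\le 1$, which downgrades the naive cubic estimate to a quadratic one and makes the decay unconditional in the size of $f_0$.
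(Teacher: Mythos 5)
Your proposal is correct, and for part (ii) it is essentially the paper's own argument: the same energy identity for $\tilde f=f-f_\infty^{\rm c}$, the same Poincar\'e lower bound on the dissipation, and the same use of the time-uniform bound $\|L[f]\|_\infty\le 1$ to make the quadratic term harmless without any smallness of the data; you merely handle the two interaction terms with slightly different (in fact sharper) constants, via Parseval for the linear term and Cauchy--Schwarz plus Young instead of the paper's pointwise bound on $\partial_\theta L[\tilde f]$. For part (i) your route genuinely differs in the one substantive step. Both you and the paper reduce the interaction contribution to $2\kappa\left(C^2+S^2\right)$ with $C=\int\cos 2\theta\,f\,d\theta$, $S=\int\sin 2\theta\,f\,d\theta$, but the paper bounds this by the relative entropy itself via Csisz\'ar--Kullback--Pinsker ($C^2+S^2\le\|f-f_\infty^{\rm c}\|_{L^1}^2\le 2\mathcal H$), so the dissipation must beat it only after the logarithmic Sobolev inequality is applied, which is what produces the threshold $\sigma/\kappa>2$. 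You instead write $C=-\tfrac12\int\sin 2\theta\,\partial_\theta f\,d\theta$, $S=\tfrac12\int\cos 2\theta\,\partial_\theta f\,d\theta$ and apply Cauchy--Schwarz in $L^2(f\,d\theta)$ to get $C^2+S^2\le\tfrac14 I[f]$, absorbing the interaction into the Fisher information \emph{before} invoking the LSI; this yields $\tfrac{d}{dt}\mathcal H\le-(2\sigma-\kappa)\mathcal H$, which implies the stated bound (since $2\sigma-\kappa\ge 2\sigma-4\kappa$) and in fact gives decay already for $\sigma/\kappa>1/2$. The only places where you leave work undone are the final bookkeeping of constants, which you defer, and the LSI $I[f]\ge 2\mathcal H[f|f_\infty^{\rm c}]$ on $\s^1$, which you propose to derive rather than quote; the paper simply cites it, and you should be careful with the normalization of the uniform measure when you do, but neither point is a gap in the argument.
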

\begin{proof}
     (i). We differentiate $\mathcal{H} (f|f_{\infty}^c)$ in \eqref{relative entropy}:
    \begin{align}\label{new_5}
        \frac{d}{dt}\mathcal{H}\left[f|f_{\infty}^{\rm c}\right](t)
        =&\int_{\s^1}\pa_t f(t)\log f(t)d\theta\notag\\=&\int_{\s^1}\pa_\theta\left[f(t)\pa_\theta\Big(\sigma \log f(t)-\kappa L[f](t)\Big)\right]\log f(t)d\theta\notag\\
        =&-\sigma \int_{\s^1}\left|\pa_\theta\log f(t)\right|^2f(t)d\theta+4\kappa\int_{\s^1} L[f]f(t)d\theta\notag\\
        =&:\mathcal{I}_{11}(t)+\mathcal{I}_{12}(t).
    \end{align}
    Next, we estimate $\mathcal{I}_{1i}$, $i=1,2$ one by one.
    
    \vspace{0.15cm}
    \noindent$\bullet$ Case A.1 (Estimate of $\mathcal{I}_{11}$):
    By the Logarithmic Sobolev inequalities (see Theorem 3.1 in \cite{gentil2004inegalites}), 
    \begin{equation*}
        \mathcal{H}[f|f_{\infty}^{\rm c}](t)\leq \frac{1}{2}\int_{\s^1}\left|\pa_\theta\log f(t)\right|^2f(t)d\theta. 
    \end{equation*}
    That is to say,
    \begin{equation}\label{new_3}
        \mathcal{I}_{11}(t)\leq-2\sigma\mathcal{H}[f|f_{\infty}^{\rm c}](t).
    \end{equation}
    \newline
    \noindent$\bullet$ Case A.2 (Estimate of $\mathcal{I}_{12}$):
    For the second term $\mathcal{I}_{12}$, we use the Csisz{\'a}r-Kullback-Pinsker inequality to obtain
    \begin{align*}
        \int_{\s^1} L[f]fd\theta&=\int_{\s^1\times\s^1} \sin 2(\theta_*-\theta)f(t,\theta_*)f(t,\theta)d\theta_*d\theta \leq \Vert f(t)-f_{\infty}^{\rm c}\Vert_{L_{\per}^1}^2\\
        &\leq 2\mathcal{H}\left[f|f_{\infty}^{\rm c}\right](t),
    \end{align*}
    i.e.,
    \begin{equation}\label{new_4}
        \mathcal{I}_{12}(t)\leq 4\kappa\mathcal{H}\left[f|f_{\infty}^{\rm c}\right](t).
    \end{equation}
    In \eqref{new_5}, we combine \eqref{new_3} and \eqref{new_4} to get
\begin{equation*}
    \frac{d}{dt}\mathcal{H}[f|f_{\infty}^{\rm c}](t)\leq -2\kappa\left(\frac{\sigma}{\kappa}-2\right)\mathcal{H}[f|f_{\infty}^{\rm c}](t),
\end{equation*}
and we use Gronwall's lemma to derive the assertion (i).

\vspace{0.15cm}
\noindent(ii). We define the perturbation $\tilde f(t,\theta):=f(t,\theta)-f_{\infty}^{\rm c}(\theta)$. 
For $L_{\rm per}^2$-estimate, we multiply \eqref{IBVP_hom} by $\tilde f$ and integrate the resulting relation on $\s^1$ to obtain
\begin{align}\label{new_6}
    \frac{1}{2}\frac{d}{dt}\left\Vert \tilde f(t)\right\Vert _{L_{\per}^2}^2=&-\sigma\left\Vert \pa_\theta \tilde f(t)\right\Vert _{L_{\per}^2}^2+\kappa\int_{\s^1}L\left[\tilde f\right](t)\tilde f(t)\pa_\theta\tilde f(t)d\theta\notag\\
    &+\frac{\kappa}{2\pi} \int_{\s^1}L\left[\tilde f\right](t)\pa_\theta\tilde f(t)d\theta\notag\\=&:\mathcal{I}_{21}(t)+\mathcal{I}_{22}(t)+\mathcal{I}_{23}(t).
\end{align}
Next, we estimate the terms $\mathcal{I}_{2i}$, $i=1,2,3$ separately. 

\vspace{0.15cm}
\noindent$\bullet$ Case B.1 (Estimate of $\mathcal{I}_{21}$):
By the Poincare inequality, for $\tilde f(t)\in L_{\per}^2$,
\begin{equation*}
    \left\Vert \pa_\theta \tilde f(t)\right\Vert _{L_{\per}^2}^2\geq \left\Vert \tilde f(t)\right\Vert _{L_{\per}^2}^2.
\end{equation*}
Therefore, we have
\begin{equation}\label{new_7}
    \mathcal{I}_{21}(t)\leq -\sigma\left\Vert\tilde f(t)\right\Vert _{L_{\per}^2}^2.
\end{equation}
\newline
\noindent$\bullet$ Case B.2 (Estimate of $\mathcal{I}_{22}$):
Note that 
\[
\int_{\s^1}L\left[\tilde f\right]\tilde f\pa_\theta\tilde fd\theta=-\int_{\s^1}L\left[\tilde f\right]\tilde f\pa_\theta\tilde fd\theta-\int_{\s^1}\pa_\theta L\left[\tilde f\right]\tilde f^2d\theta.
\]
This yields
\begin{align*}
    \mathcal{I}_{22}(t)=&\kappa\int_{\s^1}L\left[\tilde f\right](t)\tilde f(t)\pa_\theta\tilde f(t)d\theta=-\frac{\kappa}{2}\int_{\s^1}\pa_\theta L\left[\tilde f\right](t)\tilde f^2(t)d\theta\notag\\=&\kappa\int_{\s^1\times \s^1}\cos 2(\theta_*-\theta)\tilde f(t,\theta_*)\tilde f^2(t,\theta)d\theta d\theta_*.
\end{align*}
Since $f$ is nonnegative and has the unit mass on $\s^1$, we have
\begin{equation*}
    \left\vert\int_{\s^1}\cos 2(\theta_*-\theta)\tilde f(t,\theta_*)\tilde d\theta_*\right\vert_{\infty}\leq 1.
\end{equation*}
Therefore, we have 
\begin{equation}\label{new_8}
    \mathcal{I}_{22}\leq\kappa\int_{\s^1\times \s^1}\cos 2(\theta_*-\theta)\tilde f(t,\theta_*)\tilde f^2(t,\theta)d\theta d\theta_*\leq\kappa \left\Vert \tilde f(t)\right\Vert _{L_{\per}^2}^2.
\end{equation}
\newline
\noindent$\bullet$ Case B.3 (Estimate of $\mathcal{I}_{23}$):
By direct calculation, we have 
\begin{align}\label{new_9}
    \mathcal{I}_{23}=&\frac{\kappa}{2\pi}\int_{\s^1}L\left[\tilde f\right](t,\theta)\pa_\theta\tilde f(t,\theta)d\theta=-\frac{\kappa}{2\pi}\int_{\s^1}\pa_\theta L\left[\tilde f\right](t,\theta)\tilde f(t,\theta)d\theta\notag\\=&\frac{\kappa}{\pi}\int_{\s^1\times \s^1}\cos 2(\theta_*-\theta)\tilde f(t,\theta_*)\tilde f(t,\theta)d\theta d\theta_*\leq\frac{\kappa}{\pi}\left\Vert \tilde f(t)\right\Vert _{L_{\per}^1}^2.
\end{align}
Therefore, we combine \eqref{new_7} - \eqref{new_9} to get the estimate of \eqref{new_6}:

\begin{align*}
    &\frac{1}{2}\frac{d}{dt}\left\Vert \tilde f(t)\right\Vert _{L_{\per}^2}^2\\
    &\hspace{1cm
    }\leq-\sigma\left\Vert\tilde f(t)\right\Vert _{L_{\per}^2}^2+\frac{\kappa}{\pi}\left\Vert \tilde f(t)\right\Vert _{L_{\per}^1}^2+\kappa \left\Vert \tilde f(t)\right\Vert _{L_{\per}^2}^2
    \leq (-\sigma+3\kappa) \left\Vert \tilde f(t)\right\Vert _{L_{\per}^2}^2,
\end{align*}
where the last step is achieved by the following inequality:
\begin{equation*}
    \left\Vert \tilde f(t)\right\Vert_{L_{\per}^1}^2\leq2\pi\left\Vert \tilde f(t)\right\Vert_{L_{\per}^2}^2.
\end{equation*}
As a result, we obtain the following inequality:
\begin{equation*}
    \frac{d}{dt}\left\Vert \tilde f(t)\right\Vert _{L_{\per}^2}^2\leq -2\kappa\left(\frac{\sigma}{\kappa}-3\right)\left\Vert \tilde f(t)\right\Vert _{L_{\per}^2}^2,
\end{equation*}
 and use the Gronwall's lemma to derive the desired estimate. 
\end{proof}

\subsection{Convergence in the supercritical regime}

In order to obtain a quantitative description for the state of alignment and the convergence to equilibria, we introduce the order parameter $(r_n,\phi_n)$ for the mean-field systems analogous to the particle system:
\begin{definition}
Let $f=f(t,\theta)$ be the one-particle distribution function in \eqref{IBVP_hom}. Then the real-valued order parameters $\big(r_n(t),\phi_n(t)\big)$ for \eqref{IBVP_hom} is given by
\begin{equation}\label{def_rn_phin}
	r_n(t)e^{\mathrm{i} n\phi_n(t)}=\int_{ \s^1}f(t,\theta)e^{\mathrm{i} n\theta}d\theta,
\end{equation}
where
\begin{equation}\label{def_rn}
	r_n(t)=\left\vert\int_{ \s^1}f(t,\theta)e^{\mathrm{i} n\theta}d\theta\right\vert.
\end{equation}
\end{definition}
\begin{lemma}
Let $(r_n,\phi_n)$ be defined as in \eqref{def_rn_phin} and \eqref{def_rn}. Then they satisfy
\begin{equation}\label{eqn_rn}
    \left\{\begin{aligned}
        &r_n(t)=\int_{\s^1}f(t,\theta)\cos n(\theta-\phi_n(t))d\theta, \qquad  t>0,	\\
&0=\int_{\s^1}f(t,\theta)\sin n(\theta-\phi_n(t))d\theta.
    \end{aligned}\right.
\end{equation}
\end{lemma}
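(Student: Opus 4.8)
The plan is to start directly from the defining relation \eqref{def_rn_phin} and to extract the two real scalar identities by separating real and imaginary parts. First I would multiply both sides of \eqref{def_rn_phin} by the unit complex number $e^{-\mathrm{i} n\phi_n(t)}$, which gives
\begin{equation*}
    r_n(t)=\int_{\s^1}f(t,\theta)\,e^{\mathrm{i} n(\theta-\phi_n(t))}\,d\theta.
\end{equation*}
Since $f$ is a real-valued probability density and $r_n(t)$ is real by \eqref{def_rn}, the key observation is that the right-hand side must itself be real. Applying Euler's formula $e^{\mathrm{i} n(\theta-\phi_n)}=\cos n(\theta-\phi_n)+\mathrm{i}\sin n(\theta-\phi_n)$ and equating real and imaginary parts then yields exactly the first and second lines of \eqref{eqn_rn}: the modulus equals the cosine moment, while the vanishing imaginary part is precisely the sine identity.

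The only point requiring care is the degenerate case $r_n(t)=0$, in which the phase $\phi_n(t)$ is not determined by \eqref{def_rn_phin}. Here I would argue directly: $r_n(t)=0$ forces both $\int_{\s^1}f\cos n\theta\,d\theta=0$ and $\int_{\s^1}f\sin n\theta\,d\theta=0$, and then the angle-subtraction formulas show that $\int_{\s^1}f\cos n(\theta-\phi_n)\,d\theta$ and $\int_{\s^1}f\sin n(\theta-\phi_n)\,d\theta$ both vanish for every choice of $\phi_n$, so \eqref{eqn_rn} holds regardless of how $\phi_n$ is fixed.

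I do not anticipate any genuine obstacle: the statement is essentially a bookkeeping identity recording the polar decomposition of the complex moment $\int_{\s^1}f\,e^{\mathrm{i} n\theta}\,d\theta$ in terms of its modulus $r_n$ and argument $n\phi_n$. The substance of the lemma is merely the phase-alignment interpretation, namely that $\phi_n$ is chosen so that the moment points in the real-positive direction, which is exactly what the vanishing of the sine integral in \eqref{eqn_rn} encodes.
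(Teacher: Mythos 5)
Your argument is correct and is essentially identical to the paper's proof: multiply \eqref{def_rn_phin} by $e^{-\mathrm{i}n\phi_n(t)}$ and separate real and imaginary parts. Your extra remark about the degenerate case $r_n(t)=0$ is a sensible addition that the paper omits, but it does not change the route.
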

\begin{proof}
We multiply the both sides of \eqref{def_rn_phin} by $e^{-\mathrm{i}n\phi_n(t)}$ to get
\begin{equation*}
r_n(t)=\int_{\mathbb{S}^1}f(t,\theta)e^{\mathrm{i} n(\theta-\phi_n(t))}d\theta.
\end{equation*}
Now we take the real and imaginary parts to get the desired estimates.
\end{proof}
For the nematic alignment, we focus on the order parameters with $n=2$. 
\begin{proposition}\label{prop_supercritical}
The following assertions hold:
\begin{enumerate}
    \item If $r_2(0)=0$, then $f$ converges to the uniform distribution $f_{\infty}^{\rm c}=\frac{1}{2\pi}$ exponentially fast.
    \vspace{0.2cm}
    \item If $r_2(0) > 0$, then 
    the stable equilibrium takes the form of $M_{\eta,\Phi}(\theta)$ in \eqref{von_Mises_eta_Phi} with $\eta>0$ and $\Phi$ satisfying the compatibility condition \eqref{compatibility}. Furthermore, for each $m\in \mathbb{Z}_0$,
    \begin{equation}\label{converge}
        \lim_{t\to\infty}\Vert f(t)-M_{\eta,\phi_2(t)}\Vert_{H_{\rm per}^m}=0.
    \end{equation}
\end{enumerate}
\end{proposition}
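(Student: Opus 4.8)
The plan is to exploit two structural features of \eqref{IBVP_hom}: the interaction force $L[f]$ depends on $f$ only through its second Fourier mode, and \eqref{Kinetic_constD_log} is a Wasserstein gradient flow of the free energy $\mathcal F$. Setting $z_2:=\int_{\s^1}f\,e^{2\mathrm i\theta}\,d\theta=r_2e^{2\mathrm i\phi_2}$ and expanding $\sin 2(\theta_*-\theta)$, one finds the closed form $L[f](\theta)=-r_2\sin 2(\theta-\phi_2)$, so $L[f]$ is governed entirely by $(r_2,\phi_2)$. Testing $\eqref{IBVP_hom}_1$ against $e^{2\mathrm i\theta}$ and integrating by parts then gives the (non-closed) law
\[
\dot z_2=(\kappa-4\sigma)z_2-\kappa\,\bar z_2 z_4,\qquad z_4:=\int_{\s^1}f\,e^{4\mathrm i\theta}\,d\theta,
\]
whence $\tfrac{d}{dt}r_2^2=2(\kappa-4\sigma)r_2^2-2\kappa\,\mathrm{Re}(\bar z_2^2 z_4)$. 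Since $|z_4|=r_4\le 1$, this already settles part (1): if $r_2(0)=0$ then $\tfrac{d}{dt}r_2^2\le(4\kappa-8\sigma)r_2^2$ forces $z_2\equiv0$, hence $L[f]\equiv0$ and $f$ solves the heat equation on $\s^1$; the Poincar\'e inequality (first nonzero eigenvalue $1$) yields $\|f(t)-f_\infty^{\rm c}\|_{L^2_{\per}}\le e^{-\sigma t}\|f_0-f_\infty^{\rm c}\|_{L^2_{\per}}$, and parabolic smoothing upgrades this to every $H^m_{\per}$. The same inequality gives the one-sided bound $\tfrac{d}{dt}r_2^2\ge-8\sigma r_2^2$, so $r_2(0)>0$ implies $r_2(t)\ge r_2(0)e^{-4\sigma t}>0$ for all finite $t$, which I use throughout part (2).

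For part (2) I would run the LaSalle invariance principle. Proposition \ref{prop_free_energy_const} gives $\tfrac{d}{dt}\mathcal F[f]=-\mathcal D[f]\le0$, and the LaSalle invariance principle identifies the $\omega$-limit set with equilibria in $\mathcal E_\infty=\{\mathcal D[f]\equiv0,\ \mathcal F[f]=\mathcal F_\infty\}$. Solving $\mathcal D[f]=0$, i.e. integrating $\sigma\pa_\theta\log f=\kappa L[f]$ for a positive profile, produces exactly the von-Mises form \eqref{von_Mises_eta_Phi}; together with Theorem \ref{thm_unif_eta} this shows that in the supercritical regime $0<\sigma/\kappa<1/4$ the only equilibria are $f_\infty^{\rm c}$ and the circle $\{M_{\eta,\Phi}:\Phi\in\s^1\}$ for the unique $\eta>0$ solving \eqref{compatibility}. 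To make LaSalle effective I first need precompactness of the trajectory in every $H^m_{\per}$; this I obtain from uniform-in-time bounds, using that $L[f]$, $\pa_\theta L[f],\dots$ are bounded uniformly in $t$ (because $r_2\le1$) so that local parabolic estimates on unit time-windows give uniform $H^m_{\per}$ control. The $\omega$-limit set $\Omega$ is then nonempty, compact, connected and invariant.

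The crux is to exclude $f_\infty^{\rm c}$ from $\Omega$ when $r_2(0)>0$. Since $f_\infty^{\rm c}$ lies at positive distance from the von-Mises circle, connectedness of $\Omega$ leaves only $\Omega=\{f_\infty^{\rm c}\}$ or $\Omega\subseteq\{M_{\eta,\Phi}\}$. I rule out the first by the supercritical instability of $f_\infty^{\rm c}$: if $f(t)\to f_\infty^{\rm c}$ then $r_2(t),r_4(t)\to0$, so for $t$ large $r_4<c_0:=1-\tfrac{4\sigma}{\kappa}>0$ and hence
\[
\tfrac{d}{dt}r_2^2\ge 2\kappa\,r_2^2\,(c_0-r_4)>0,
\]
which, together with $r_2>0$, keeps $r_2$ bounded below and contradicts $r_2\to0$. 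Therefore $\Omega\subseteq\{M_{\eta,\Phi}\}$, so $\mathcal F_\infty=\mathcal F[M_\eta]$ and $\mathrm{dist}_{H^m_{\per}}(f(t),\{M_{\eta,\Phi}\}_\Phi)\to0$. To convert this into convergence to the moving profile $\M$, I note that the order-parameter map $g\mapsto(r_2,\phi_2)$ is continuous and sends $M_{\eta,\Phi}$ to $(r_\eta,\Phi)$ with $r_\eta=\langle\cos2\theta\rangle_{M_\eta}$; hence the phase $\phi_2(t)$ of $f(t)$ stays within $o(1)$ of the phase of its nearest von-Mises profile, and continuity of $\Phi\mapsto M_{\eta,\Phi}$ in $H^m_{\per}$ turns $\mathrm{dist}_{H^m_{\per}}(f(t),\{M_{\eta,\Phi}\})\to0$ into $\|f(t)-\M\|_{H^m_{\per}}\to0$ for every $m$.

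The \textbf{main obstacle} is exactly this exclusion step: because the $r_2$-equation is coupled to $z_4$ (and the Fourier hierarchy is not triangular), one cannot reduce the large-time asymptotics to a scalar ODE. My remedy is to combine the connectedness of $\Omega$ — which converts a delicate trapping question into a topological dichotomy — with the one-sided differential inequality for $r_2^2$ valid near $f_\infty^{\rm c}$. The secondary technical points are the uniform-in-time higher-Sobolev bounds needed for precompactness, and the bookkeeping of the phase $\phi_2(t)$, which need not itself converge but is automatically tracked by $\M$; a quantitative (exponential) rate, if desired, would instead require a spectral-gap or \L ojasiewicz analysis of $\mathcal F$ transverse to the rotation orbit $\{M_{\eta,\Phi}\}$.
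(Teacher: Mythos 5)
Your proposal is correct and follows essentially the same route as the paper: part (1) via the invariance of $r_2=0$ and reduction to the heat equation, and part (2) via the LaSalle principle together with the differential inequality for $r_2^2$ in the regime $\sigma/\kappa<1/4$ to rule out the uniform limit, followed by tracking of the phase $\phi_2(t)$. Your Fourier-mode derivation of $\tfrac{d}{dt}r_2^2$ and the explicit lower bound $r_2(t)\ge r_2(0)e^{-4\sigma t}$ (which guarantees $r_2(t)>0$ at the time the growth inequality kicks in) are just a cleaner and slightly more complete rendering of the computation the paper performs with $\int_{\s^1} f\sin^2 2(\theta-\phi_2)\,d\theta$.
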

\begin{proof}
Recall that
\[
r_2(t)=\int_{\mathbb{S}^1}f(t,\theta)e^{2\mathrm{i} (\theta-\phi_2(t))}d\theta.
\]
Consider the time evolution of $r_2(t)$:    
    \begin{align}
            \frac{dr_2(t)}{dt}=&\int_{\s^1}f(t,\theta)\left[-2\kappa L[f]\sin 2(\theta-\phi_2(t))-4\sigma \cos 2(\theta-\phi_2(t))\right]d\theta \notag \\
            =&-4\sigma r_2(t)+2\kappa r_2(t)\int_{\s^1}f(t,\theta)\sin^2 2(\theta-\phi_2(t))d\theta\notag\\ =&2\kappa r_2(t)\left[\int_{\s^1}f(t,\theta)\sin^2 2(\theta-\phi_2(t))d\theta-\frac{2\sigma}{\kappa}\right].\label{d2r2}
    \end{align}    
    \noindent(1) If $r_2(0)=0$, then $r_2(t)=0$ for all $t> 0$ by \eqref{d2r2}.  
    Then we have 
    \[
    L[f](t,\theta)=0 \quad \forall t>0 \quad\forall\ \theta\in \s^1,
    \]
    so that \eqref{IBVP_hom} reduces to the heat equation on the torus. The equilibrium $f_{\infty}$ must be the uniform distribution $f_{\infty}^{\rm c}$ and we obtain an exponential convergence to the uniform distribution as $t\to\infty$. 
\vspace{0.2cm}

    \noindent (2) If $r_2(0)> 0$, then we will show that there exists a positive $\eta$ that satisfies the compatibility condition \eqref{compatibility} such that the limiting function for $f$ as $t \to \infty$ is the von-Mises distribution \eqref{von_Mises_eta_Phi}, instead of the uniform distribution $f_{\infty}^{\rm c}$. 
    Suppose that the limit distribution is uniform, and the associated limiting value for $r_2(t)$ as $t\to \infty$ should be zero. Using \eqref{d2r2}, we obtain
\begin{align*}
    \frac{d}{dt}r_2^2(t)=8\kappa r_2^2(t)\left[-\int_{\s^1}f(t,\theta)\cos 4(\theta-\phi_2(t))d\theta+\left(\frac{1}{4}-\frac{\sigma}{\kappa}\right)\right].
\end{align*}
Since our limit distribution is uniform, it is easy to see that 
\begin{equation*}
    \int_{\s^1}f(t,\theta)\cos 4(\theta-\phi_2(t))d\theta
\end{equation*}
is small enough when $t$ is large enough. Based on the case $\frac{\sigma}{\kappa}<\frac{1}{4}$, $r_2$ is increasing when $t$ is large enough, which gives a contradiction. Therefore, the steady states must be the nontrivial case. 

    Now we take the positive $\eta$ given by Theorem \ref{thm_unif_eta}, and define the von-Mises type function $M_{\eta,\phi_2(t)}$ where the angle $\phi_2(t)$ in order parameters is determined by the solution $f$.
    Assume that \eqref{converge} does not hold for some $m\in \mathbb{Z}_{0}$.
    Then, there exists $\varepsilon>0$ and a sequence $\{t_k\}_{k=1}^{\infty}$ such that
    \begin{equation}\label{new_1}
       \Vert f(t_k)-M_{\eta,\phi_2(t_k)}\Vert_{H_{\rm per}^m}\geq \varepsilon \quad \forall\   k\geq 1. 
    \end{equation}
    By the LaSalle invariance principle, there exists $\{\omega_k\}_{k=1}^{\infty}\in \s^1$ such that  
    \[
    \Vert f(t_k)-M_{\eta, \omega_k}\Vert_{H_{\per}^m}\to 0.
    \] 
    Since $\s^1$ is compact, there exists a subsequence of $\{\omega_{k}\}_{k=1}^{\infty}$ which converges to some $\Phi$. We still denote the sequence by $\{\omega_{k}\}_{k=1}^{\infty}$. Then we obtain the following convergence: 
    \[
    \Vert f(t_{n_k})-M_{\eta,\Phi}\Vert_{H_{\rm per}^m}\to 0 \quad \text{as}\ \  k\to\infty
    \]
    In particular, $\phi_2(t_k)\to \Phi$, resulting in the convergence of $M_{\eta,\phi_2(t_k)}$ to $M_{\eta,\Phi}$ in $H_{\per}^m$. Now we combine the results above and the Minkowski inequality to find
    \begin{equation*}
       \Vert f(t_k)-M_{\eta,\phi_2(t_k)}\Vert_{H_{\rm per}^m}\to 0 \quad \text{ as }\ k\to\infty.
    \end{equation*}
    This is contradictory to \eqref{new_1}.
\end{proof}

Now we study the asymptotic convergence behavior for the nontrivial case in the sense that
\[
f(t,\theta) = \big(1 + h(t,\theta)\big)M_{\eta,\phi_2(t)}(\theta),
\]
where $\eta>0$ is the same as in Proposition \ref{prop_supercritical} given by the compatibility condition, $\phi_2(t)$ is associated with $f$. The function $h$ is well-defined since $M_{\eta,\phi_2(t)}$ is positive, and the perturbation function $h\in H_{\rm per}^2$ always satisfies that
 $\|h(t)\|_{L^\infty_{\rm per}} < \varepsilon$ for a given $\varepsilon>0$.
 \begin{lemma}
	Let $f$ be the classical solution to the nonlinear Fokker-Planck equation \eqref{IBVP_hom}. Moreover, we assume that $r_2(t)>0$, for all $t\geq 0$. Then we have  
\begin{equation}\label{d1phin}
    \frac{d}{dt}\phi_2(t)=-\frac{\kappa}{r_2(t)}\int_{\s^1}f(t,\theta) L[f](t,\theta)
    \cos n(\theta-\phi_n)d\theta.
\end{equation}
\end{lemma}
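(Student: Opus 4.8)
The plan is to read off the evolution of $\phi_2$ from the constraint that implicitly defines it, rather than from the amplitude relation. Concretely, the phase $\phi_n(t)$ is pinned down by the vanishing of the imaginary part in the second relation of \eqref{eqn_rn}, namely $\int_{\s^1} f(t,\theta)\sin n(\theta-\phi_n(t))\,d\theta = 0$ for all $t$. First I would differentiate this identity in $t$. By the chain rule the $\phi_n$-dependence of the integrand contributes $-n\phi_n'(t)\int_{\s^1} f\cos n(\theta-\phi_n)\,d\theta = -n\,\phi_n'(t)\,r_n(t)$, where the first relation of \eqref{eqn_rn} has been used; the remaining piece is $\int_{\s^1}\pa_t f\,\sin n(\theta-\phi_n)\,d\theta$. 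Setting the sum to zero and solving for $\phi_n'(t)$ is what produces the prefactor $1/r_n(t)$, so the standing hypothesis $r_2(t)>0$ is precisely what makes the expression well defined.

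Next I would insert the evolution equation $\pa_t f = \sigma\pa_{\theta\theta} f - \kappa\pa_\theta\big(L[f]f\big)$ from $\eqref{IBVP_hom}_1$ into the term $\int_{\s^1}\pa_t f\,\sin n(\theta-\phi_n)\,d\theta$ and integrate by parts in $\theta$ over $\s^1$; periodicity kills every boundary term. The decisive simplification, and the step I would flag as the heart of the argument, is that the diffusion contribution disappears: moving both $\theta$-derivatives onto $\sin n(\theta-\phi_n)$ gives $-\sigma n^2\int_{\s^1} f\sin n(\theta-\phi_n)\,d\theta$, which is exactly the quantity that vanishes by the defining constraint. Thus the very identity used to pin down $\phi_n$ annihilates the diffusive part, leaving only the alignment term.

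Finally, for the drift contribution I would integrate by parts once, transferring $\pa_\theta$ from $\pa_\theta(L[f]f)$ onto $\sin n(\theta-\phi_n)$ to obtain a factor $n\cos n(\theta-\phi_n)$, hence a term proportional to $\kappa n\int_{\s^1} L[f]f\cos n(\theta-\phi_n)\,d\theta$. Equating this with $n\phi_n'(t)r_n(t)$ from the first paragraph and cancelling the common factor $n$ yields the identity \eqref{d1phin}, with the sign dictated by the two integrations by parts. No serious obstacle remains: the smoothness of $f$ established in Proposition \ref{prop: smoothness} justifies differentiating under the integral sign and the integrations by parts, and the only genuine care required is the bookkeeping of signs through those steps.
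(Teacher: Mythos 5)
Your proposal is correct and follows essentially the same route as the paper: differentiating the phase constraint $\eqref{eqn_rn}_2$ in $t$ is just the real form of the paper's step of differentiating \eqref{def_rn_phin} and taking the imaginary part after multiplying by $e^{-2\mathrm{i}\phi_2(t)}$, and the subsequent substitution of $\eqref{IBVP_hom}_1$ with the diffusion term annihilated by that same constraint is identical to the paper's computation. The only caveat is the sign: careful bookkeeping in the drift term gives $+\kappa n\int_{\s^1}L[f]f\cos n(\theta-\phi_n)\,d\theta$, so the minus sign in \eqref{d1phin} appears to be a slip shared by the paper's own last display (harmless downstream, since the identity is only ever used in absolute value).
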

\begin{proof}
	Note that
	\[
	r_2e^{\mathrm{i}n\phi_2}=\int_{ \s^1}f(t,\theta)e^{2\mathrm{i}\theta}d\theta.
	\]
	We take derivative on both sides to get
	\[
	\left(2\mathrm{i}r_2(t)\frac{d}{dt}\phi_2(t)+\frac{d}{dt}r_2(t)\right)e^{2\mathrm{i}\phi_2(t)}=\int_{\s^1}\frac{\pa}{\pa t}f(t,\theta)e^{2\mathrm{i}\theta}d\theta.
	\]
	Next, we multiply on both sides by $e^{-2\mathrm{i}\phi_2}$  and take the imaginary part to get
	\begin{align*}
		2r_2(t)\frac{d}{dt}\phi_2(t)=&\int_{\s^1}\frac{\pa}{\pa t}f(t,\theta)\sin 2(\theta-\phi_2(t))d\theta\\
		=&\int_{\s^1}\Big[-\kappa\pa_{\theta}(L[f](t,\theta)f(t,\theta))+\sigma\pa_{\theta\theta}f(t,\theta)\Big]\sin 2(\theta-\phi_2(t))d\theta\\=
		&\int_{\s^1}-2\kappa L[f](t,\theta)f(t,\theta)\cos 2(\theta-\phi_2(t))d\theta.
	\end{align*}
 Since $r_2(t) > 0$, we divide by $2r_2(t)$ on both sides to get the desired result \eqref{d1phin}.
\end{proof}
\begin{theorem}
    Suppose that $\Vert f(t)-M_{\eta,\phi_2(t)}\Vert_{H_{\per}^n}$ is uniformly bounded for $t\in [0,\infty)$ by a positive constant $K$, with $n$ large enough. Then, there exist $\Phi\in \s^1$ and $\delta, r, C>0$ such that if $\Vert f_0-M_{f_0}\Vert_{L^{2}_{\per}}<\delta$, we have 
    \begin{equation*}
        \Vert f(t)-M_{\eta, \Phi}\Vert_{L^2_{\per}}\leq 
 C \Vert f_0-M_{f_0}\Vert_{L^{2}_{\per}}e^{-rt}.
    \end{equation*}
\end{theorem}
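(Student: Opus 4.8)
The plan is to run a linearized stability analysis around the von-Mises profile, using the perturbative ansatz $f(t,\theta)=(1+h(t,\theta))M_{\eta,\phi_2(t)}(\theta)$ that is set up just before the statement. Since $M_{\eta,\phi_2(t)}$ is bounded above and below by positive constants, the weighted norm $\|h\|_M^2:=\int_{\s^1}h^2 M_{\eta,\phi_2(t)}\,d\theta$ is equivalent to $\|h\|_{L^2_{\per}}^2$ and to $\|f(t)-M_{\eta,\phi_2(t)}\|_{L^2_{\per}}^2$, so it suffices to prove exponential decay of $\|h(t)\|_M$. Two inputs are used throughout: Proposition~\ref{prop_supercritical} already gives the qualitative convergence $\|f(t)-M_{\eta,\phi_2(t)}\|_{H^m_{\per}}\to0$, hence $\|h(t)\|_{L^\infty_{\per}}\to0$; and the standing hypothesis $\|f(t)-M_{\eta,\phi_2(t)}\|_{H^n_{\per}}\le K$ provides a uniform bound that, via interpolation, keeps all nonlinear remainders quadratically small.

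First I would differentiate the ansatz and substitute into the log-form \eqref{Kinetic_constD_log}. Using the equilibrium identity $\sigma\pa_\theta\log M_{\eta,\phi_2}=\kappa L[M_{\eta,\phi_2}]$, a direct consequence of the compatibility condition \eqref{compatibility}, the drift collapses to $\sigma\pa_\theta h-\kappa(1+h)L[hM_{\eta,\phi_2}]$, while $\pa_t M_{\eta,\phi_2(t)}=2\eta\,\dot\phi_2(t)\,M_{\eta,\phi_2}\sin2(\theta-\phi_2)$ injects a phase-transport term governed by \eqref{d1phin}. Testing the resulting equation against $hM_{\eta,\phi_2}$ and integrating by parts gives, to leading order,
\begin{equation*}
\frac12\frac{d}{dt}\|h\|_M^2=-\underbrace{\left(\sigma\int_{\s^1}M_{\eta,\phi_2}|\pa_\theta h|^2\,d\theta-\kappa\int_{\s^1}M_{\eta,\phi_2}\,\pa_\theta h\,L[hM_{\eta,\phi_2}]\,d\theta\right)}_{=:Q[h]}+R[h],
\end{equation*}
where $R[h]$ collects cubic terms. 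The decisive simplification is that the $\dot\phi_2$-forcing enters only through $\int_{\s^1}hM_{\eta,\phi_2}\sin2(\theta-\phi_2)\,d\theta$, which vanishes: the definition of the order parameter $\phi_2$ forces the second relation in \eqref{eqn_rn}, and together with the symmetry of $M_{\eta,\phi_2}$ this is exactly the constraint $\int hM_{\eta,\phi_2}\sin2(\theta-\phi_2)\,d\theta=0$. Conservation of mass likewise yields $\int hM_{\eta,\phi_2}\,d\theta=0$.

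The heart of the argument, and the step I expect to be hardest, is the spectral gap: there exists $r>0$ with $Q[h]\ge 2r\|h\|_M^2$ for every $h$ in the constrained subspace
\begin{equation*}
V:=\Big\{h:\ \textstyle\int_{\s^1}hM_{\eta,\phi_2}\,d\theta=0,\ \ \int_{\s^1}hM_{\eta,\phi_2}\sin2(\theta-\phi_2)\,d\theta=0\Big\}.
\end{equation*}
After rotating to $\phi_2=0$, the nonlocal factor reads $L[hM_\eta](\theta)=\cos2\theta\int_{\s^1}\sin2\theta_*\,hM_\eta\,d\theta_*-\sin2\theta\int_{\s^1}\cos2\theta_*\,hM_\eta\,d\theta_*$, a rank-two operator coupling only to the second Fourier mode whose sine component is annihilated on $V$. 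Thus $Q$ splits into the positive weighted Dirichlet form $\sigma\int M_\eta|\pa_\theta h|^2\,d\theta$, coercive on the complement of the second mode by a weighted Poincaré inequality, and a single delicate contribution along the $\cos2\theta$ direction. Positivity there is precisely the variance inequality $\langle\sin^22\theta\rangle_M>\langle\cos^22\theta\rangle_M-\langle\cos2\theta\rangle_M^2$ of Lemma~\ref{lemma_kappa_sigma}, which holds exactly because $0<\sigma/\kappa<1/4$; equivalently, $Q$ is the Hessian of the free energy \eqref{free_energy} at $M_{\eta,\phi_2}$, nondegenerate modulo rotations in the supercritical regime of Theorem~\ref{thm_unif_eta}. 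Pinning down the uniform constant $r$ and tracking the $\eta$-dependent coefficients is the main technical obstacle.

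Finally I would close the estimate. Using $\|h(t)\|_{L^\infty_{\per}}\to0$ and the bound $K$, the remainder obeys $|R[h]|\le C(\|h\|_{L^\infty_{\per}}+|\dot\phi_2|)\,Q[h]$, so there is $t_0$ beyond which $R[h]\le\tfrac12 Q[h]$; then $\frac{d}{dt}\|h\|_M^2\le -2r\|h\|_M^2$ for $t\ge t_0$, and Gronwall gives $\|h(t)\|_M\le Ce^{-rt}$, with $[0,t_0]$ contributing only a constant factor. To replace the moving phase $\phi_2(t)$ by a fixed $\Phi$, note that \eqref{d1phin} yields $|\dot\phi_2(t)|\le C\|h(t)\|_M$, so $\phi_2(t)$ is Cauchy and converges to some $\Phi\in\s^1$ with $|\phi_2(t)-\Phi|\le\int_t^\infty|\dot\phi_2|\,ds\le C'e^{-rt}$; since $\|M_{\eta,\phi_2(t)}-M_{\eta,\Phi}\|_{L^2_{\per}}\le C|\phi_2(t)-\Phi|$, the triangle inequality produces $\|f(t)-M_{\eta,\Phi}\|_{L^2_{\per}}\le Ce^{-rt}\|f_0-M_{f_0}\|_{L^2_{\per}}$, where the initial comparison $\|h(0)\|_M\simeq\|f_0-M_{f_0}\|_{L^2_{\per}}$ with $M_{f_0}=M_{\eta,\phi_2(0)}$ supplies the admissible $\delta$.
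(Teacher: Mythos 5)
Your overall architecture (perturbative ansatz $f=(1+h)M_{\eta,\phi_2(t)}$, the two orthogonality constraints $\langle h\rangle_{M}=\langle h\sin 2(\cdot-\phi_2)\rangle_{M}=0$, Gronwall, then the $|\dot\phi_2|\lesssim\|h\|$ estimate to pin down $\Phi$) matches the paper's endgame, but the core step is carried out in a different metric and, as written, has a genuine gap. You differentiate the flat weighted norm $\|h\|_M^2$ and reduce everything to coercivity of
$Q[h]=\sigma\int_{\s^1}M|\pa_\theta h|^2d\theta-\kappa\int_{\s^1}M\,\pa_\theta h\,L[hM]\,d\theta$ on the constrained subspace $V$. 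The trouble is that the linearized operator is \emph{not} self-adjoint in $L^2(M\,d\theta)$ (its natural symmetric structure is the gradient-flow/free-energy one), and your description of $Q$ is not what one actually gets. Taking $\phi_2=0$ and using $L[hM]=-\langle h\cos 2\cdot\rangle_M\sin2\theta$ on $V$, an integration by parts against the weight $M_\eta=Z^{-1}e^{\eta\cos2\theta}$ (whose derivative is $-2\eta\sin2\theta\,M_\eta$) gives
\begin{equation*}
Q[h]=\sigma\int_{\s^1}M_\eta|\pa_\theta h|^2d\theta-2\kappa\langle h\cos2\cdot\rangle_{M}^2+2\kappa\eta\,\langle h\cos2\cdot\rangle_{M}\,\langle h\sin^2 2\cdot\rangle_{M},
\end{equation*}
and since $\langle h\sin^22\cdot\rangle_M=-\tfrac12\langle h\cos4\cdot\rangle_M$, the last term couples the second and fourth harmonics of $h$. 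So $Q$ does \emph{not} split into ``positive Dirichlet form plus a single delicate contribution along the $\cos2\theta$ direction,'' and the variance inequality of Lemma \ref{lemma_kappa_sigma} alone does not deliver $Q[h]\ge 2r\|h\|_M^2$: you would still have to absorb the indefinite cross term, which is precisely the step you defer as ``the main technical obstacle.'' Until that is done, the spectral gap — the heart of the theorem — is asserted rather than proved.

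The paper sidesteps this by never leaving the free-energy framework: it expands $\mathcal F[f]-\mathcal F[M_{\eta,\phi_2(t)}]$ and the dissipation $\mathcal D[f]$ of Proposition \ref{prop_free_energy_const} to second order in $h$, applies a weighted Poincar\'e inequality to $\mathcal D$, and introduces the decomposition $h=\tilde h+\alpha(t)\bigl(\cos2(\cdot-\phi_2)-\langle\cos2(\cdot-\phi_2)\rangle_M\bigr)$ with $\langle\tilde h\rangle_M=\langle\tilde h\cos2(\cdot-\phi_2)\rangle_M=0$. In those variables both quadratic forms become diagonal in $(\alpha,\langle\tilde h^2\rangle_M)$ with no cross terms, and Lemma \ref{lemma_kappa_sigma} (in the form $0<\tfrac{2\sigma}{\kappa}-\varkappa<\tfrac{2\sigma}{\kappa}$) immediately yields the entropy--entropy-dissipation inequality $\mathcal D_\varepsilon[f]\ge2\mu(\mathcal F_\varepsilon[f]-\mathcal F[M_{\eta,\phi_2(t)}])$; Gronwall is then applied to the free-energy gap and converted to $L^2_{\per}$ by norm equivalence. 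If you want to salvage your $L^2$-energy route you must either prove the coercivity of $Q$ on $V$ including the $\cos2\theta$--$\cos4\theta$ coupling, or switch to the free-energy functional where the decomposition diagonalizes the problem.
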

\begin{proof}
Note that $\la h\rangle_{{M}_{\eta,\phi_2(t)}}=0$. 
Then we can do the expansion of the free energy and obtain the dispassion function around $M_{\eta,\phi_2(t)}$. 
Based on the definition \eqref{free_energy}, we have
\[
    \begin{split}
        &\mathcal{F}\left[f\right](t)-\mathcal{F}\left[M_{\eta,\phi_2(t)}\right](t)\\
        &\hspace{0.2cm}=\frac{\sigma}{2}\left\la h^2\right\ra_{M_{\eta,\phi_2(t)}}-\frac{\kappa}{4}\int_{\s^1\times\s^1}h(\theta)M_{\eta,\phi_2(t)}(\theta)h(\theta_*)M_{\eta,\phi_2(t)}(\theta_*)\cos2(\theta_*-\theta)d\theta d\theta_*\\
        &\hspace{0.5cm}+\mathcal{O}\left(\Vert h(t)\Vert_{L_{\per}^{\infty}}^3\right).
    \end{split}
\]
This implies
\begin{align*}
\mathcal{F}\left[f\right](t)=&\mathcal{F}\left[M_{\eta,\phi_2(t)}\right](t) + \frac{\sigma}{2} \left\langle h^2(t,\cdot)\right\rangle_{M_{\eta,\phi_2(t)}}\\
&-\frac{\kappa}{4}\left(\left\langle h(t,\cdot)\cos 2\cdot\right\rangle_{M_{\eta,\phi_2(t)}}^2
+ \left\langle h(t,\cdot)\sin 2\cdot\right\rangle_{M_{\eta,\phi_2(t)}}^2\right) 
+\mathcal{O}\left(\varepsilon^3\right) \\
=: &\mathcal{F}_{\varepsilon}\left[f\right](t) +\mathcal{O}\left(\varepsilon^3\right) .
\end{align*}
Note that
\begin{align*}
  &\left\langle h(t,\cdot)\cos 2\cdot\right\rangle_{M_{\eta,\phi_2(t)}}^2+ \left\langle h(t,\cdot)\sin 2\cdot\right\rangle_{M_{\eta,\phi_2(t)}}^2\\
&\hspace{0.5cm}=\left\langle h(t,\cdot)\cos 2\left(\cdot-\phi_2(t)\right)\right\rangle_{M_{\eta,\phi_2(t)}}^2
+ \left\langle h(t,\cdot)\sin 2\left(\cdot-\phi_2(t)\right)\right\rangle_{M_{\eta,\phi_2(t)}}^2.  
\end{align*}
Thanks to $\eqref{eqn_rn}_2$ with $n=2$, we have
\begin{align*}
\left\langle h(t,\cdot)\sin 2\left(\cdot-\phi_2(t)\right)\right\rangle_{M_{\eta,\phi_2(t)}}
&=\int_{\mathbb{S}^1} \left(f(t,\theta) - M_{\eta,\phi_2(t)}(\theta)\right)\sin 2\left(\theta-\phi_2(t)\right)\,d\theta = 0.
\end{align*}
Hence, we have
\begin{align}
\begin{aligned}\label{difference_F}
    &\mathcal{F}\left[f\right](t)-\mathcal{F}\left[M_{\eta,\phi_2(t)}\right](t) \\
&\hspace{0.5cm}=\mathcal{F}_{\varepsilon}\left[f\right](t)  -\mathcal{F}\left[M_{\eta,\phi_2(t)}\right](t) +\mathcal{O}\left(\varepsilon^3\right) \\
&\hspace{0.5cm}=\frac{\sigma}{2} \left\langle h^2(t,\cdot)\right\rangle_{M_{\eta,\phi_2(t)}}
-\frac{\kappa}{4}\left\langle h(t,\cdot)\cos 2\left(\cdot-\phi_2(t)\right)\right\rangle_{M_{\eta,\phi_2(t)}}^2
+\mathcal{O}\left(\varepsilon^3\right).
\end{aligned}
\end{align}
For the dissipation rate, we have 
\[
\mathcal{D}\left[M_{\eta,\phi_2(t)}\right](t) \equiv 0
\]
and
\begin{align*}
&\mathcal{D}[f](t)\\
=&\int_{\s^1} f(t,\theta)\left(\sigma \partial_\theta\log f(t,\theta)-\kappa L[f](t,\theta)\right)^2 d\theta\\
=&\left\la \big(1+h(t,\theta)\big)\left\vert \partial_\theta \left(\sigma \log \big(1+h(t,\theta)\big)-\frac{\kappa}{2}\left\langle h(t,\cdot) \cos 2(\cdot-\theta)\right\rangle_{M_{\eta,\phi_2(t)}}\right)\right\vert^2\right\ra_{M_{\eta,\phi_2(t)}} \\
\geq &\; C_{\eta}(1-\varepsilon)\left\langle \left(\varphi(t,\cdot) - \left\langle\varphi(t,\cdot)\right\rangle_{M_{\eta,\phi_2(t)}}\right)^2\right\rangle_{M_{\eta,\phi_2(t)}}\\
= &\; C_{\eta}(1-\varepsilon)\left(\left\langle \varphi^2(t,\cdot)\right\rangle_{M_{\eta,\phi_2(t)}} - \left\langle\varphi(t,\cdot)\right\rangle_{M_{\eta,\phi_2(t)}}^2\right).
\end{align*}
The last inequality used the weighted Poincar{\'e}'s inequality:
\[
\left\la \left\vert \partial_\theta \varphi\right\vert^2\right\ra_{M_{\eta,\phi_2(t)}}\geq C_{\eta} \left\la \left(\varphi-\la \varphi\ra_{M_{\eta,\phi_2(t)}}\right)^2\right\ra _{M_{\eta,\phi_2(t)}}\quad \forall \ \varphi \in H_{\rm per}^1.
\]
This can be deduced from the standard Poincar{\'e}'s inequality since $M_{\eta,\phi_2(t)}$ is positive and bounded.
Here, the positive constant $C_{\eta}$ depends on $\eta$ only. Now, we take
\[
\varphi(t,\theta) = \sigma \log \big(1+h(t,\theta)\big)-\frac{\kappa}{2}\left\langle h(t,\cdot) \cos 2(\cdot-\theta)\right\rangle_{M_{\eta,\phi_2(t)}}
\]
to derive the desired inequality. It follows that
\begin{align}\label{eq_D_eps}
\mathcal{D}[f](t) =: \mathcal{D}_{\varepsilon}[f](t) + \mathcal{O}\left(\varepsilon^3\right)
\geq C_{\eta}(1-\varepsilon){\rm I} + \mathcal{O}\left(\varepsilon^3\right),
\end{align}
where
\begin{align*}
{\rm I} = &\sigma^2\left\la h^2\right\ra_{M_{\eta,\phi_2(t)}}+\frac{\kappa^2}{4}\la h\cos2(\cdot-\phi_2(t))\ra_{M_{\eta,\phi_2(t)}}^2\\
    &\times\left(\left\la \cos^2 2(\cdot-\phi_2(t))\right\ra_{M_{\eta,\phi_2(t)}} - \la \cos2(\cdot-\phi_2(t))\ra_{M_{\eta,\phi_2(t)}}^2 - \frac{4\sigma}{\kappa}\right).
\end{align*}
Here we have used the relations: 
    \begin{equation}
        \la h\sin2(\theta-\phi_2(t))\ra_{M_{\eta,\phi_2(t)}}=\la (1+h)\sin2(\theta-\phi_2(t))\ra_{M_{\eta,\phi_2(t)}}=0.
    \end{equation}
Next, we claim that there exists a positive constant $\mu$ such that
\begin{equation}\label{new_2}
    \mathcal{D}[f](t)\geq 2\mu\left(\mathcal{F}[f](t)-\mathcal{F}[M_{\eta,\phi_2(t)}](t)\right).
\end{equation}
{\it Proof of Claim \eqref{new_2}}:
Based on previous calculation, we need to deal with $\left\la h^2\right\ra_{M_{\eta,\phi_2(t)}}$ more carefully. 
The idea is to split the term $\left\la h^2\right\ra_{M_{\eta,\phi_2(t)}}$ into $\la \cos2(\cdot-\phi_2(t))\ra_{M_{\eta,\phi_2(t)}}^2$ and other terms. For this, we introduce a new function $\tilde h=\tilde h(t,\theta)$ such that
\begin{align*}
    h(t,\theta)=&\tilde h(t,\theta)+\left(\cos 2(\theta-\phi_2(t))-\la\cos2(\theta-\phi_2(t))\ra_{M_{\eta,\phi_2(t)}}\right)\alpha(t),
\end{align*}
where
\begin{equation*}
    \alpha(t)=\frac{\la h\cos2(\cdot-\phi_2(t))\ra_{M_{\eta,\phi_2(t)}}}{\left\la \left(\cos 2(\cdot-\phi_2(t))-\left\la \cos 2(\cdot-\phi_2(t))\right\ra_ {M_{\eta,\phi_2(t)}}\right)^2\right\ra_{\M}}.
\end{equation*}
For the denominator in $\alpha(t)$, we have
\begin{align*}
&\left\la \left(\cos 2(\cdot-\phi_2(t))-\left\la \cos 2(\cdot-\phi_2(t))\right\ra_ {M_{\eta,\phi_2(t)}}\right)^2\right\ra_{\M}\\
&\hspace{1cm}=\left\la \cos^2 2(\cdot-\phi_2(t))\right\ra_{\M} - \left\la \cos 2(\cdot-\phi_2(t))\right\ra_{\M}^2\\
&\hspace{1cm}= 1- \frac{2\sigma}{\kappa} - \left(\frac{2\sigma}{\kappa}\eta\right)^2 =:\varkappa \in \left(0, \frac{2\sigma}{\kappa}\right).
\end{align*}
Note that
\begin{equation*}
    \left\la \tilde h\right\ra_{M_{\eta,\phi_2(t)}}=\left\la \tilde h\cos2(\cdot-\phi_2(t))\right\ra_{M_{\eta,\phi_2(t)}}=0.
\end{equation*}
Hence, we have
\begin{align}
\begin{aligned}\label{h^2_weighted_average}
    \left\la h^2\right\ra_{M_{\eta,\phi_2(t)}}
    &=\left[\la \cos^2 2(\cdot-\phi_2)\ra_{\M}-\left(\frac{2\sigma}{\kappa}\eta\right)^2\right]\alpha^2(t)+\left\la\tilde h^2\right\ra_{\M}\\
    &= \varkappa\alpha^2(t)+\left\la\tilde h^2\right\ra_{\M}.
\end{aligned}
\end{align}
We substitute \eqref{h^2_weighted_average} into \eqref{difference_F} and \eqref{eq_D_eps} to obtain 
\begin{align}\label{difference_F2}
    &\mathcal{F}_{\varepsilon}[f](t)-\mathcal{F}[M_{\eta,\phi_2(t)}](t)\notag\\
    &\hspace{1cm}=\frac{\sigma}{2}\left(\varkappa\alpha^2(t)+\left\la\tilde h^2\right\ra_{\M}\right)-\frac{\kappa}{4}\la h\cos2(\cdot-\phi_2)\ra_{M_{\eta,\phi_2(t)}}^2\notag
    \\
    &\hspace{1cm}=\frac{\kappa}{4}\varkappa\left(\frac{2\sigma}{\kappa}-\varkappa\right)\alpha^2(t)
    +\frac{\sigma}{2}\left\la\tilde h^2\right\ra_{\M},
\end{align}
and
\begin{align*}
    &\mathcal{D}_{\varepsilon}[f](t)\\
    \geq&\Lambda \left(1-\varepsilon\right)\left[\sigma^2\left(\varkappa\alpha^2(t)+\left\la\tilde h^2\right\ra_{\M}\right)
    +\frac{\kappa^2}{4}\left(\varkappa - \frac{4\sigma}{\kappa}\right)\left\la h\cos2(\cdot-\phi_2(t))\right\ra_{M_{\eta,\phi_2(t)}}^2 \right]\nonumber\\
    =&\Lambda \left(1-\varepsilon\right)\left[\frac{\kappa^2}{4}\varkappa\left(\frac{2\sigma}{\kappa} - \varkappa\right)^2\alpha^2(t)
    +\sigma^2\left\la\tilde h^2\right\ra_{\M}\right] \\
    =&\Lambda \left(1-\varepsilon\right)\kappa\left(\frac{2\sigma}{\kappa} - \varkappa\right)\left[\frac{\kappa}{4}\varkappa\left(\frac{2\sigma}{\kappa} - \varkappa\right)\alpha^2(t)
    +\frac{\sigma^2}{\kappa\left(\frac{2\sigma}{\kappa} - \varkappa\right)}\left\la\tilde h^2\right\ra_{\M}\right].
\end{align*}
Thanks to Lemma \ref{lemma_kappa_sigma}, we have
\[
0<\frac{2\sigma}{\kappa}-\varkappa< \frac{2\sigma}{\kappa} 
\qquad \Longrightarrow \qquad \frac{\sigma^2}{\kappa\left(\frac{2\sigma}{\kappa} - \varkappa\right)} > \frac{\sigma}{2}.
\]
It follows that
\[
\mathcal{D}_{\varepsilon}[f](t)\geq 2\mu\left(\mathcal{F}_{\varepsilon}[f](t)-\mathcal{F}[M_{\eta,\phi_2(t)}](t)\right)
\text{ with }
\mu=\Lambda\left(1-\varepsilon\right)\frac{\kappa}{2}\left(\frac{2\sigma}{\kappa}-\varkappa\right).
\]
Next we neglect higher order terms and use Gronwall's lemma to derive 
\begin{equation}\label{bound_1}
    \mathcal{F}[f](t)-\mathcal{F}[M_{\eta,\phi_2(t)}](t)\leq e^{-2\mu t}\left(\mathcal{F}[f](0)-\mathcal{F}[M_{\eta,\phi_2(0)}](0)\right).
\end{equation}
By \eqref{h^2_weighted_average} and \eqref{difference_F2}, there exist positive constants $c_1$ and $c_2$ such that 
\begin{equation}\label{bound_2}
    c_1\left\la h^2\right\ra_{M_{\eta,\phi_2(t)}}\leq \mathcal{F}[f](t)-\mathcal{F}[M_{\eta,\phi_2(t)}](t)\leq c_2\left\la h^2\right\ra_{M_{\eta,\phi_2(t)}}.
\end{equation}
Since $M_{\eta,\phi_2(t)}$ is bounded from above and below, there exist positive constants $c_3$ and $c_4$ such that
\begin{equation}\label{bound_3}
    c_3\Vert f(t)-M_{\eta, \phi_2(t)}\Vert_{L_{\per}^2}^2\leq \left\la h^2\right\ra_{M_{\eta,\phi_2(t)}}\leq c_4\Vert f(t)-M_{\eta, \phi_2(t)}\Vert_{L_{\per}^2}^2.
\end{equation}
We combine \eqref{bound_1} - \eqref{bound_3} to find that there exists constant $C_1>0$ such that
\begin{equation*}
    \Vert f(t)-M_{\eta, \phi_2(t)}\Vert_{L_{\per}^2}\leq C_1e^{-\mu t}\Vert f_0-M_{\eta, \phi_2(0)}\Vert_{L_{\per}^2}.
\end{equation*}
Next we study the evolution of $\phi_2(t)$.
\begin{align*}
    \left|\frac{d}{dt}\phi_2(t)\right|&=\kappa\left|\int_{\s^1}f(t,\theta)\cos2(\theta-\phi_2(t))\sin2(\theta-\phi_2(t))d\theta\right|\\
    &=\kappa\left|\int_{\s^1}\left(f(t,\theta)-M_{\eta, \phi_2(t)}\right)\cos2(\theta-\phi_2(t))\sin2(\theta-\phi_2(t))d\theta\right|\\
    &\leq \sqrt{2\pi}\kappa \Vert f(t)-M_{\eta, \phi_2(t)}\Vert_{L_{\per}^2} \leq \sqrt{2\pi}\kappa C_1e^{-\mu t}\Vert f_0-M_{\eta, \phi_2(0)}\Vert_{L_{\per}^2}.
\end{align*}
Therefore, $\phi_2$ converges to some $\Phi$ as $t\to \infty$, and 
\begin{align*}
    \begin{aligned}
       \left\vert \Phi-\phi_2(t)\right\vert&\leq \int_{t}^{\infty}\left\vert\frac{d}{dt}\phi_2(\tau)\right\vert d\tau\leq \sqrt{2\pi}\kappa C_1\int_{t}^{\infty}e^{-\mu \tau}d\tau\Vert f_0-M_{\eta, \phi_2(0)}\Vert_{L_{\per}^2}\\&= C_2e^{-\mu t} \Vert f_0-M_{\eta, \phi_2(0)}\Vert_{L_{\per}^2},  
    \end{aligned}
\end{align*}
where $C_2 = \frac{\sqrt{2\pi}\kappa C_1}{\mu}$.
Since $M_{\eta, \phi_2}$ is Lipschitz continuous with respect to $\phi_2$, we get the desired estimate: 
\begin{align*}
    \Vert f(t)-M_{\eta, \Phi}\Vert_{L_{\per}^2}&\leq \Vert f(t)-M_{\eta, \phi_2(t)}\Vert_{L_{\per}^2}+\Vert M_{\eta, \Phi}-M_{\eta, \phi_2(t)}\Vert_{L_{\per}^2}\\
    &\leq \Vert f(t)-M_{\eta, \phi_2(t)}\Vert_{L_{\per}^2}+C_3\vert \Phi-\phi_2(t)\vert\\
    &\leq C_4\Vert f(t)-M_{\eta, \phi_2(t)}\Vert_{L_{\per}^2}\leq Ce^{-\mu t}\Vert f_0-M_{\eta, \phi_2(0)}\Vert_{L_{\per}^2}.
\end{align*}
\end{proof}
\begin{remark}
    From \eqref{difference_F2}, we obtain that for $0<\frac{\sigma}{\kappa}<\frac{1}{4}$, 
    \[
    \mathcal F[f^c_{\infty}]>\mathcal F[M_{\eta}].
    \]
    Thus, uniform distribution is not stable in the case of $0<\frac{\sigma}{\kappa}<\frac{1}{4}$. 
\end{remark}
\section{Phase transition for nonconstant noise} In this section, we study phase transition in nonconstant noise setting. 

\vspace{0.15cm}
Consider a particular noise term given by  
\[
D[f](t,\theta)=\frac{1}{4}\left(\left\vert \int_{\s^1}f(t,\theta_*)e^{2i\theta_*}d\theta_*\right\vert+\int_{\s^1}\cos2(\theta_*-\theta)f(t,\theta_*)d\theta_*\right)^2.
\]
The  IBVP for \eqref{IBVP} in spatially homogeneous setting becomes
\begin{equation}\label{IBVP_hom_degenerate}
    \left\{\begin{aligned}
       &\pa_t f+\kappa\pa_\theta\big(L[f]f\big)=\sigma\pa_\theta (D[f]\pa_{\theta}f), \quad \theta \in \s^1, \; t >0,\\
&f|_{t=0}=f_0, \quad f(t,\theta) = f(t,\theta+2\pi).
    \end{aligned}\right.
\end{equation}
\begin{remark}
    If $f$ is a probability density function for every $t\geq 0$, then $D[f]$ can be expressed in terms of $r_2$ and $\phi_2$:
    \begin{equation}\label{equ_D_nonconstant}
        D[f]=r_2^2\cos^4(\theta-\phi_2).
    \end{equation}
    The corresponding diffusion term in the particle system in \eqref{A3} reads
    \[
    D_t^j(X_t,\Theta_t)=\frac{1}{2}\left[\frac{1}{N}\sum_{k=1}^N\cos\left(2(\theta_t^k-\theta_t^j)\right)+\left|\frac{1}{N}\sum_{k=1}^N\exp{\left(2\mathrm{i}\left(\theta_t^k-\theta_t^j\right)\right)}\right|\right].
    \]
\end{remark}
For this nonconstant case, the diffusion term is degenerate. Therefore, the standard parabolic PDE theory cannot be applied directly to obtain a local well-posedness. In this sequel, we mainly focus on equilibrium.  
Now we introduce the function $g=g(t,\theta)$: 
\begin{equation}\label{def_g}
	g(t,\theta)=f(t,\theta)+f(t,\theta+\pi), \qquad  \theta \in [0,\pi).
\end{equation}
We will see the advantage of studying $g$ instead of $f$ later. It is easy to see that $g$ has a period of $\pi$ and 
\begin{align*}
	\int_0^{\pi}g(t,\theta)d\theta=
	&\int_0^{\pi}\Big(f(t,\theta)+f(t,\theta+\pi)\Big)d\theta=
	\int_0^{2\pi}f(t,\theta)d\theta,
\end{align*}
i.e. $g$ is a probability density function on $[0,\pi)$ for $f$ being a probability density function on $\s^1$. 
Based on the mean-field equation for $f$, we derive the Fokker-Planck equation for $g$. 

\begin{lemma}
Let f be a classical solution to \eqref{IBVP_hom_degenerate}. Then, the function $g$ defined by \eqref{def_g} satisfies the following Fokker-Planck equation:
\begin{equation}\label{F-P_double_angle}
	\partial_t g=-\kappa\partial_\theta\left(\tilde L[g]g\right)+\sigma\partial_\theta\left(\tilde D[g]\partial_\theta g\right),
\end{equation}
where the nonlocal terms are given as follows:
\begin{align*}
    &\tilde L[g](t,\theta)=\int_0^{\pi}\sin 2(\theta_*-\theta)g(t,\theta_*)d\theta_*,\\
&\tilde D[g](t,\theta)=\frac{1}{4}\left(\left\vert \int_0^{\pi}g(t,\theta_*)e^{2i\theta_*}d\theta_*\right\vert+\int_0^{\pi}\cos2(\theta_*-\theta)g(t,\theta_*)d\theta_*\right)^2.
\end{align*}
\end{lemma}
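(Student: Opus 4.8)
The plan is to exploit the $\pi$-periodicity built into the kernels. The crucial observation is that every trigonometric object appearing in the nonlocal terms --- namely $\sin 2(\theta_*-\theta)$, $\cos 2(\theta_*-\theta)$ and $e^{2i\theta_*}$ --- has period $\pi$ in each of its arguments. First I would record this by splitting each $\s^1$-integral defining $L[f]$ and $D[f]$ as $\int_0^{\pi}+\int_{\pi}^{2\pi}$, substituting $\theta_*\mapsto\theta_*+\pi$ in the second piece, and using $\sin 2(\theta_*+\pi-\theta)=\sin 2(\theta_*-\theta)$ together with $\cos 2(\theta_*+\pi-\theta)=\cos 2(\theta_*-\theta)$ and $e^{2i(\theta_*+\pi)}=e^{2i\theta_*}$ to fold the two halves together. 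This yields the two identities
\[
L[f](t,\theta)=\int_0^{\pi}\sin 2(\theta_*-\theta)\,g(t,\theta_*)\,d\theta_*=\tilde L[g](t,\theta),
\qquad
D[f](t,\theta)=\tilde D[g](t,\theta),
\]
where in the second identity I treat the $\theta$-independent modulus $\left|\int f\,e^{2i\theta_*}\,d\theta_*\right|$ and the $\cos$-convolution separately. The same computation shows that both $L[f]$ and $D[f]$ are $\pi$-periodic in $\theta$, a fact I will use repeatedly in the next step.

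Second, I would write down the equation \eqref{IBVP_hom_degenerate} at the shifted argument $\theta+\pi$. Since a constant shift commutes with differentiation, $(\pa_\theta h)(t,\theta+\pi)=\pa_\theta\big(h(t,\theta+\pi)\big)$ for any smooth $h$; combining this with the $\pi$-periodicity of $L[f]$ and $D[f]$ established above, the equation evaluated at $\theta+\pi$ becomes
\[
\pa_t f(t,\theta+\pi)=-\kappa\,\pa_\theta\big(L[f](t,\theta)\,f(t,\theta+\pi)\big)+\sigma\,\pa_\theta\big(D[f](t,\theta)\,\pa_\theta f(t,\theta+\pi)\big),
\]
i.e. the shifted density solves the same equation with the same \emph{unshifted} coefficients.

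Finally, I would add this to the equation for $f(t,\theta)$. Because the coefficients $L[f]$ and $D[f]$ are common to both equations, the right-hand sides combine linearly, and with $g=f(t,\cdot)+f(t,\cdot+\pi)$ I obtain
\[
\pa_t g=-\kappa\,\pa_\theta\big(L[f]\,g\big)+\sigma\,\pa_\theta\big(D[f]\,\pa_\theta g\big).
\]
Substituting $L[f]=\tilde L[g]$ and $D[f]=\tilde D[g]$ from the first step gives exactly \eqref{F-P_double_angle}. The only place demanding genuine care --- the main, and fairly mild, obstacle --- is the first step: one must verify that the nonlocal coefficients depend on $f$ only through $g$ and are $\pi$-periodic in $\theta$, since otherwise the two shifted equations would carry different coefficients and could not be summed. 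Once the period-$\pi$ structure of the kernels is pinned down, the remainder is a linear superposition.
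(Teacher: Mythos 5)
Your proposal is correct and follows essentially the same route as the paper: establish that $L[f]$ and $D[f]$ are $\pi$-periodic in $\theta$ and depend on $f$ only through $g$ (so $L[f]=\tilde L[g]$, $D[f]=\tilde D[g]$), then add the equation at $\theta$ and at $\theta+\pi$. The only difference is presentational — you make the change of variables $\theta_*\mapsto\theta_*+\pi$ and the commutation of the shift with $\partial_\theta$ fully explicit, which the paper leaves implicit.
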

\begin{proof}
Note that $L[f](t,\theta) = L[f](t,\theta+\pi)$ and
	\begin{align*}
		L[f](t,\theta)=\int_{\s^1}\sin 2(\theta_*-\theta)f(t,\theta_*)d\theta_*
  =\int_0^{\pi}\sin 2(\theta_*-\theta)g(t,\theta_*)d\theta_*.
	\end{align*}
That is to say, $L[f]$ can be regarded as a functional of $g$, and we denote it by $\tilde{L}[g]$. Meanwhile, $D[f](t,\theta) = D[f](t,\theta+\pi)$, 
\begin{align*}
&\left| \int_{\s^1}f(t,\theta_*)e^{2\mathrm{i}\theta_*}\,d\theta_*\right|=\left| \int_{0}^{\pi}f(t,\theta_*)e^{2\mathrm{i}\theta_*}\,d\theta_*
+\int_{\pi}^{2\pi}f(t,\theta_*)e^{2\mathrm{i}\theta_*}\,d\theta_*\right|\\
  &\hspace{1cm}=\left| \int_{0}^{\pi}\left( f(t,\theta_*) + f(t,\theta_*+\pi)\right)e^{2\mathrm{i}\theta_*}\,d\theta_*\right|=\left|\int_0^{\pi}g(t,\theta_*)e^{2\mathrm{i}\theta_*}e^{-2\mathrm{i}\theta}\,d\theta_*\right|,
\end{align*}
and
\begin{align*}	
		\int_{\s^1}\cos 2(\theta_*-\theta)f(t,\theta_*)\,d\theta_*
  =\int_0^{\pi}\cos 2(\theta_*-\theta)g(t,\theta_*)\,d\theta_*.
	\end{align*}
Therefore, $D[f]$ can be rewritten as a functional of $g$, denoted by $\tilde{D}[g]$. 
Note that
	\begin{align*}
		\partial_t g(t,\theta)=&\,\partial_t\Big(f(t,\theta)+f(t,\theta+\pi)\Big)\\
  =&-\kappa\partial_\theta\Big(L[f](t,\theta)f(t,\theta)\Big)+\sigma\partial_\theta\Big(D[f](t,\theta)\partial_\theta f(t,\theta)\Big)\\
  &-\kappa\partial_\theta\Big(L[f](t,\theta+\pi)f(t,\theta+\pi)\Big)+\sigma\partial_\theta\Big(D[f](t,\theta+\pi)\partial_\theta f(t,\theta+\pi)\Big)\\
  =&-\kappa\partial_\theta\Big(\tilde L[g](t,\theta)g(t,\theta)\Big)+\sigma\partial_\theta\Big(\tilde D[g](t,\theta)\partial_\theta g(t,\theta)\Big).
	\end{align*}
\end{proof}

Next, we introduce an operator $Q$ and rewrite \eqref{F-P_double_angle} as
\begin{equation}\label{F-P_double_angle_Q}
	\partial_t g=Q[g]:=\sigma\partial_\theta\left(-\frac{\kappa}{\sigma}\tilde L[g]g+\tilde D[g]\partial_\theta g\right).
\end{equation}
Since the evolution of $g$ is self-consistent, we can study the steady state for $g$. First, we define the equilibrium state for \eqref{F-P_double_angle_Q}.
\begin{definition}
	$\tilde{M}=\tilde{M}(\theta)$ is an equilibrium state for \eqref{F-P_double_angle_Q},
    if the following conditions hold:
	\begin{itemize}
		\item[(i)] $\tilde{M}(\theta)\in C^2(\R)$, is nonnegative and has a period of $\pi$;
        \vspace{0.1cm}
		\item[(ii)] $\int_0^{\pi}\tilde{M}(\theta)\,d\theta = 1$ $\quad$ and $\quad$ $Q\left[\tilde{M}\right]=0$.
\end{itemize}
\end{definition}
To study the equilibrium state for \eqref{F-P_double_angle_Q}, we consider order parameters with respect to $g$: 
\begin{equation*}
	 r_2[g](t)e^{2\mathrm{i}\phi_2[g](t)}=\int_0^\pi g(t,\theta)e^{2\mathrm{i} \theta}d\theta,
\end{equation*}
where
\begin{equation*}
	r_2[g](t)=\left\vert\int_0^\pi g(t,\theta)e^{2\mathrm{i} \theta}d\theta\right\vert.
\end{equation*}
Here we still use the notations $r_2$ and $\phi_2$, since they have the same value for $f$ and its corresponding $g$ in \eqref{def_g}. To show the difference, we add a square bracket of $[\cdot]$ to indicate the dependence of $f$ or $g$.
\begin{proposition}\label{equilibrium_g}
    For system \eqref{F-P_double_angle_Q}, there are only two kinds of equilibria. One is all $C^2$ probability distribution function $\tilde M$ such that $r_2\left[\tilde{M}\right]=0$, and the other is the generalized von-Mises distribution:
    \begin{equation}\label{equilibrium_nonconstant}
        \tilde M_{\tilde\eta,\tilde\Phi}(\theta)=\frac{1}{Z_{\tilde\eta}}\exp{\left(-\frac{\tilde\eta}{\cos^2(\tilde\Phi-\theta)}\right)},
    \end{equation}
    where $\tilde \eta>0$, and $Z_{\tilde\eta}$ is a normalization constant.
\end{proposition}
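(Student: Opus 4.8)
The plan is to read off the equilibrium condition directly from \eqref{F-P_double_angle_Q}. If $\tilde M$ is an equilibrium, then $Q[\tilde M]=\sigma\pa_\theta\mathcal J=0$, where I abbreviate the flux by $\mathcal J:=-\frac{\kappa}{\sigma}\tilde L[\tilde M]\tilde M+\tilde D[\tilde M]\pa_\theta\tilde M$. Since $\tilde M\in C^2$ and $\tilde L,\tilde D$ are smoothing functionals of $\tilde M$, the flux $\mathcal J$ is $C^1$, and $\pa_\theta\mathcal J\equiv 0$ forces $\mathcal J\equiv C$ for some constant $C$. The first step is then to rewrite the nonlocal coefficients through the order parameters $(r_2,\phi_2)=(r_2[\tilde M],\phi_2[\tilde M])$: a direct computation (of the type already recorded in \eqref{equ_D_nonconstant}) gives
\[
\tilde L[\tilde M](\theta)=r_2\sin 2(\phi_2-\theta),\qquad \tilde D[\tilde M](\theta)=r_2^2\cos^4(\phi_2-\theta).
\]

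The crucial observation is that the diffusion coefficient $\tilde D$ degenerates exactly where the drift also vanishes. Over one period $[0,\pi)$ there is a single degeneracy point $\theta_0=\phi_2+\tfrac{\pi}{2}$, where $\cos(\phi_2-\theta_0)=0$ and simultaneously $\sin 2(\phi_2-\theta_0)=\sin(-\pi)=0$. Evaluating $\mathcal J$ at $\theta_0$ makes both of its terms vanish, which pins the constant to $C=0$. Hence every equilibrium obeys the reduced first-order relation
\[
\tilde D[\tilde M]\,\pa_\theta\tilde M=\frac{\kappa}{\sigma}\,\tilde L[\tilde M]\,\tilde M \qquad\text{for all }\theta.
\]
Now I split on $r_2$. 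If $r_2[\tilde M]=0$, then $\tilde L[\tilde M]\equiv\tilde D[\tilde M]\equiv 0$, so $\mathcal J\equiv 0$ and $Q[\tilde M]=0$ hold automatically; thus every $C^2$ probability density with $r_2=0$ is an equilibrium, and conversely any equilibrium with $r_2=0$ lies in this family. If $r_2>0$, I work on the open arc $(\phi_2-\tfrac{\pi}{2},\phi_2+\tfrac{\pi}{2})$ where $\tilde D>0$, divide by $\tilde M$, and set $u=\theta-\phi_2$ to obtain the separable equation
\[
\pa_u\log\tilde M=\frac{\kappa}{\sigma r_2}\,\frac{\sin 2(\phi_2-\theta)}{\cos^4(\phi_2-\theta)}=-\frac{2\kappa}{\sigma r_2}\,\frac{\sin u}{\cos^3 u}.
\]
Integrating with $w=\cos u$ yields $\log\tilde M=-\frac{\kappa}{\sigma r_2}\,\frac{1}{\cos^2 u}+\text{const}$, i.e. the asserted form \eqref{equilibrium_nonconstant} with $\tilde\Phi=\phi_2$ and $\tilde\eta=\frac{\kappa}{\sigma r_2}>0$.

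Two points must be made rigorous rather than glossed over. First, positivity and the legitimacy of dividing by $\tilde M$: on the open arc the coefficient $\frac{\kappa\tilde L}{\sigma\tilde D}$ is continuous, so the linear ODE $\pa_\theta\tilde M=\frac{\kappa\tilde L}{\sigma\tilde D}\tilde M$ enjoys uniqueness; since $\tilde M\equiv 0$ solves it, a nonnegative $\tilde M$ of unit mass cannot vanish at an interior point, whence $\tilde M>0$ throughout the arc. Second, the degeneracy point must be revisited as a matching and regularity issue: the explicit profile and all its derivatives tend to $0$ as $u\to\pm\tfrac{\pi}{2}$ (the $e^{-c/\cos^2 u}$ flat decay), so it extends $\pi$-periodically to a genuinely $C^2$ (indeed $C^\infty$) density, and the single degenerate point of $[0,\pi)$ introduces no jump in $\mathcal J$. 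I expect this degeneracy analysis — using it once to force $C=0$ and again to guarantee a unique smooth nonnegative profile — to be the main technical obstacle, since the standard nondegenerate Fokker–Planck arguments do not apply. Finally, I would note that $r_2$ and $\phi_2$ on the right-hand side are those of the resulting profile, so that $\tilde\eta=\kappa/\bigl(\sigma\,r_2[\tilde M_{\tilde\eta}]\bigr)$ is a self-consistency (compatibility) condition; its solvability is the separate phase-transition question and is not needed for the present classification of the equilibrium shapes.
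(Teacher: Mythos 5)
Your proof is correct, and it reaches the classification by a genuinely different route than the paper. The paper does not argue via the constancy of the flux: instead it multiplies $Q[\tilde M]=0$ by the potential $\psi(\theta)=\log\tilde M(\theta)-\frac{\kappa}{\sigma}\int_0^\theta\frac{\tilde L[\tilde M]}{\tilde D[\tilde M]}\,d\theta_*$ and integrates by parts over $\left(-\frac{\pi}{2},\frac{\pi}{2}\right)$ (the boundary terms vanishing because $\tilde D[\tilde M](\pm\frac{\pi}{2})=0$) to obtain the dissipation identity
\[
\int_{-\pi/2}^{\pi/2}\tilde M\,\tilde D[\tilde M]\left\vert \sigma\pa_\theta\log\tilde M-\kappa\frac{\tilde L[\tilde M]}{\tilde D[\tilde M]}\right\vert^2 d\theta=0,
\]
and then runs an open--closed connectedness argument on the level sets of $\psi$ to show $\psi$ is constant on the whole arc (which simultaneously forces $\tilde M>0$ there). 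You instead observe that $Q[\tilde M]=0$ makes the flux $\mathcal J$ a constant, and that the drift $\tilde L[\tilde M]=r_2\sin 2(\phi_2-\theta)$ vanishes at the same point $\theta_0=\phi_2+\frac{\pi}{2}$ where $\tilde D$ degenerates, so evaluating there pins $\mathcal J\equiv 0$; positivity of $\tilde M$ on the open arc then comes from uniqueness for the resulting linear first-order ODE plus the unit-mass normalization. Both arguments hinge on the same structural feature (the simultaneous vanishing of $\tilde L$ and $\tilde D$ at the degeneracy point), but your version uses it pointwise rather than through an integral identity, which is more elementary and sidesteps the slightly delicate justification of the boundary terms and of $\psi$ near $\pm\frac{\pi}{2}$ in the paper's integration by parts; the paper's variational form has the advantage of not requiring the explicit representation of $\tilde L$ and $\tilde D$ through $(r_2,\phi_2)$ until the final integration. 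Your closing remarks on the flat $e^{-\tilde\eta/\cos^2 u}$ extension across the degeneracy point and on the self-consistency of $\tilde\eta$ being a separate question (it is the paper's Proposition \ref{prop_unique_tilde_eta}) are both accurate.
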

\begin{proof}
We consider the following cases:
\[
\text{Either} \quad r_2\left[\tilde M\right]=0 \quad \text{or}\quad r_2\left[\tilde M\right]\neq0.
\]
$\bullet$ Case A: Consider the case $r_2\left[\tilde M\right]=0$, then we have
\[
\tilde L\left[\tilde M\right](\theta)=\tilde D\left[\tilde M\right](\theta)=0.
\] 
Therefore, every $C^2$ probability distribution function $\tilde M$ such that $r_2\left[\tilde M\right]=0$ is an equilibrium.

\vspace{0.1cm}
\noindent$\bullet$ Case B: Consider the case $r_2\left[\tilde M\right]\neq 0$.
Note that
\[
Q\left[\tilde{M}\right](\theta) = \sigma \pa_\theta\left\{\tilde M(\theta)\tilde D\left[\tilde M\right](\theta)\left[\partial_{\theta}\log \tilde M-\frac{\kappa}{\sigma}\frac{\tilde L\left[\tilde M\right]}{\tilde D\left[\tilde M\right]}\right]\right\}.
\]
Therefore, we only need to verify
\begin{equation}\label{eqn_partiallogM}
\partial_{\theta}\log \tilde M(\theta)-\frac{\kappa}{\sigma}\frac{\tilde L\left[\tilde M\right](\theta)}{\tilde D\left[\tilde M\right](\theta)} = 0 \quad\forall \ \theta\in\left(-\frac{\pi}{2},\frac{\pi}{2}\right).
\end{equation}
Indeed, since we focus on the case $r_2>0$, and by the definition of $\tilde M\in C^2$, we know that $\tilde D$ can be written in the following form of \eqref{equ_D_nonconstant}. Then we combine \eqref{equ_D_nonconstant} with \eqref{eqn_partiallogM} to get the characterization of equilibrium in \eqref{equilibrium_nonconstant}.

\vspace{0.15cm}
    Now we turn to the proof of \eqref{eqn_partiallogM}. Since $\tilde M$ is an equilibrium, $Q\left[\tilde M\right](\theta)=0$ for each $\theta\in \left(-\frac{\pi}{2},\frac{\pi}{2}\right)$. 
    Without loss of generality, we set $\phi_2\left[\tilde M\right]=0$, and we set
    \[
    \psi(\theta):=\log \tilde M(\theta)-\frac{\kappa}{\sigma}\displaystyle\int_{0}^{\theta}\frac{\tilde L\left[\tilde M\right](\theta_*)}{\tilde D\left[\tilde M\right](\theta_*)}d\theta_*,\quad \theta\in\left(-\frac{\pi}{2},\frac{\pi}{2}\right).
    \]
    We multiply $Q\left[\tilde M\right]$ by $\sigma \psi(\theta)$ and use the integration by parts over $(-\frac{\pi}{2},\frac{\pi}{2})$ to obtain 
    \begin{equation}\label{eqn_tildeMtildeD}
        \int_{(-\frac{\pi}{2},\frac{\pi}{2})}\tilde M(\theta)\tilde D\left[\tilde M\right](\theta)\left\vert\sigma\pa_{\theta}\log\tilde M(\theta)-\kappa\frac{\tilde L\left[\tilde M\right](\theta)}{\tilde D\left[\tilde M\right](\theta)}\right\vert ^2 d\theta=0.
    \end{equation}
    Now we focus on the interval $(-\frac{\pi}{2},\frac{\pi}{2})$ since 
    \[
    \tilde D\left[\tilde M\right]\left(\frac{\pi}{2}\right)=0.
    \]
    We want to show that $\psi$ remains a constant on $(-\frac{\pi}{2},\frac{\pi}{2})$. Since $\tilde M$ is a  $C^2$ probability distribution function, $\left\{\theta|\ \tilde M(\theta)>0\right\}$ is relatively open to $(-\frac{\pi}{2},\frac{\pi}{2})$. Note that $\tilde D\left[\tilde M\right]>0$ on $(-\frac{\pi}{2},\frac{\pi}{2})$, so that $\left\{\theta|\ \tilde M(\theta)\tilde D\left[\tilde M\right](\theta)>0\right\}$ is relatively open to $(-\frac{\pi}{2},\frac{\pi}{2})$. In other words, $\left\{\theta|\ \tilde M(\theta)\tilde D\left[\tilde M\right](\theta)>0\right\}$ consists of finite relatively open sets. By \eqref{eqn_tildeMtildeD}, $\psi(\theta)$ remains a constant in every connected component where $\tilde M\tilde D\left[\tilde M\right]$ takes positive values. Therefore, $\psi^{-1}(C)$ is open for every $C\in \mathbb{R}$. 

    \vspace{0.1cm}
    Now if $\psi(\theta_k)=C$, $k=1,2, \cdots$, with $\theta_k$ converging to some $\theta_{\infty}\in(-\frac{\pi}{2},\frac{\pi}{2})$, then we have 
    \[
    \tilde M(\theta_k)=\exp\left(-\frac{\tilde \eta}{\cos^2(\theta_k)}+{C}\right),
    \]
    where $\tilde \eta$ is a positive constant. By the continuity of $\tilde M$, we can pass to the limit of $k\to \infty$ to get
    \[
    \tilde M(\theta_{\infty})=\exp\left(-\frac{\tilde \eta}{\cos^2(\theta_{\infty})}+C\right).
    \]
    Therefore, $\psi^{-1}(C)$ is relatively closed with respect to $(-\frac{\pi}{2},\frac{\pi}{2})$. Since $\psi^{-1}(C)$ is also relatively open with respect to $(-\frac{\pi}{2},\frac{\pi}{2})$, $\psi^{-1}(C)$ can only be $(-\frac{\pi}{2},\frac{\pi}{2})$ if it is not empty. Note that there exists $\theta_0\in(-\frac{\pi}{2},\frac{\pi}{2})$ such that $\tilde M(\theta_0)>0$. We take $C_0=\psi(\theta_0)$ and consider the set $\psi^{-1}(C_0)$, which is nonempty, relative closed and open to $(-\frac{\pi}{2},\frac{\pi}{2})$. As a consequence, $\psi$ takes the same value $C_0$ on $(-\frac{\pi}{2},\frac{\pi}{2})$ and the relation \eqref{eqn_partiallogM} holds.
\end{proof}
Based on the characterization of equilibrium, we focus on the generalized von-Mises case. Similar to the constant noise, we still want to know what the corresponding compatibility condition is, and how many solutions to the compatibility condition exist in different cases. 
\begin{proposition}\label{prop_unique_tilde_eta}
Suppose that $\tilde M$ is an equilibrium for \eqref{F-P_double_angle} with boundary condition of period $\pi$ with $r_2\left[\tilde M\right]>0$. Then, for all $\sigma,\kappa>0$, there exists a unique $\tilde \eta>0$ such that     \[
    \tilde M=\frac{1}{\tilde Z}\exp{\left(-\frac{\tilde \eta}{\cos^2\left(\tilde \Phi-\theta\right)}\right)},
    \]
    where $\phi_2\left[\tilde M\right]=\tilde \Phi$.
    In other words, for each $\sigma,\kappa>0$, generalized von-Mises distribution always exists and is unique. 
\end{proposition}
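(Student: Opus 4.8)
The plan is to reduce the statement to the solvability of a single scalar compatibility equation in $\tilde\eta$. By the translation invariance of \eqref{F-P_double_angle} in $\theta$ we may set $\tilde\Phi=\phi_2[\tilde M]=0$, so that on the fundamental domain $\left(-\frac{\pi}{2},\frac{\pi}{2}\right)$ the candidate equilibrium is $\tilde M(\theta)=\frac{1}{\tilde Z}\exp\!\left(-\tilde\eta/\cos^2\theta\right)$. Since $\tilde M$ is even, a direct computation gives $\tilde L[\tilde M](\theta)=-r_2\sin 2\theta$ and, by \eqref{equ_D_nonconstant}, $\tilde D[\tilde M](\theta)=r_2^2\cos^4\theta$, where $r_2=r_2[\tilde M]=\int_{-\pi/2}^{\pi/2}\cos 2\theta\,\tilde M(\theta)\,d\theta$ is positive by hypothesis. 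Inserting these into the equilibrium relation \eqref{eqn_partiallogM} and using $\pa_\theta(1/\cos^2\theta)=2\sin\theta/\cos^3\theta=\sin 2\theta/\cos^4\theta$, I find that $\tilde M$ solves \eqref{eqn_partiallogM} precisely when $\tilde\eta=\kappa/(\sigma r_2)$. Because $r_2$ is itself determined by $\tilde\eta$ through $\tilde M$, existence and uniqueness of the equilibrium is equivalent to the solvability of the scalar compatibility condition
\[
G(\tilde\eta):=\tilde\eta\, r_2(\tilde\eta)=\frac{\kappa}{\sigma},\qquad r_2(\tilde\eta)=\frac{\int_{-\pi/2}^{\pi/2}\cos 2\theta\,e^{-\tilde\eta/\cos^2\theta}\,d\theta}{\int_{-\pi/2}^{\pi/2}e^{-\tilde\eta/\cos^2\theta}\,d\theta}.
\]

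Next I would establish the elementary properties of $G$. Differentiation under the integral sign is justified because $e^{-\tilde\eta/\cos^2\theta}$ decays faster than any power of $1/\cos^2\theta$ near the endpoints $\pm\frac{\pi}{2}$; concretely, the substitution $u=\tan\theta$ (so $1/\cos^2\theta=1+u^2$ and $d\theta=du/(1+u^2)$) turns every integral into an absolutely convergent Gaussian-type integral on $\R$, whose $\tilde\eta$-derivatives remain dominated. Hence $r_2$ is smooth on $(0,\infty)$ and $G$ is continuous. Dominated convergence gives $r_2(\tilde\eta)\to 0$ as $\tilde\eta\to 0^+$ together with the trivial bound $0<r_2<1$, so $G(0^+)=0$; and for any fixed $\tilde\eta_0>0$ and all $\tilde\eta\geq\tilde\eta_0$ one has $G(\tilde\eta)\geq\tilde\eta\,r_2(\tilde\eta_0)\to\infty$, so $G$ attains all of $(0,\infty)$.

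The heart of the argument, and the step I expect to demand the most care, is the strict monotonicity of $G$, which yields uniqueness. Viewing $r_2(\tilde\eta)=\la\cos 2\theta\ra_{\tilde\eta}$ as the mean of $\cos 2\theta$ under the Gibbs measure $p_{\tilde\eta}\propto e^{-\tilde\eta/\cos^2\theta}$, the standard exponential-family identity yields $r_2'(\tilde\eta)=-\mathrm{Cov}_{\tilde\eta}\!\left(\cos 2\theta,\,1/\cos^2\theta\right)$. Writing $w=1/\cos^2\theta$, the key observation is the algebraic identity $\cos 2\theta=2/w-1$, so $\cos 2\theta$ is a strictly decreasing function of the random variable $w$. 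By the Chebyshev correlation inequality (equivalently, the i.i.d.\ coupling $\mathrm{Cov}(g(W),W)=\frac12\mathbb E[(g(W)-g(W'))(W-W')]$ with $g$ strictly decreasing), this covariance is strictly negative whenever $w$ is non-degenerate, which holds for every $\tilde\eta>0$. Therefore $r_2'>0$, so both $r_2$ and $G=\tilde\eta\,r_2$ are strictly increasing. Combined with the boundary limits, $G\colon(0,\infty)\to(0,\infty)$ is a continuous strictly increasing bijection, and the compatibility equation $G(\tilde\eta)=\kappa/\sigma$ has exactly one solution $\tilde\eta>0$ for every $\sigma,\kappa>0$. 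As a byproduct, since $r_2(0^+)=0$ and $r_2'>0$ we get $r_2(\tilde\eta)>0$ on $(0,\infty)$, which is consistent with the standing assumption $r_2[\tilde M]>0$ and confirms that the construction is self-consistent.
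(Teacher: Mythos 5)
Your proposal is correct, and its skeleton coincides with the paper's: both reduce the problem to the scalar compatibility equation $\tilde\eta\,\langle\cos 2\theta\rangle_{\tilde M_{\tilde\eta}}=\kappa/\sigma$ (your $G$ is the paper's $\tilde\beta$) and conclude by showing this function is a continuous, strictly increasing bijection from $(0,\infty)$ onto $(0,\infty)$. The one genuine difference is the monotonicity step. The paper differentiates $\tilde\beta$ directly and arrives at $\tilde\beta'(\tilde\eta)=\langle\cos 2\theta\rangle_{\tilde M_{\tilde\eta}}+2\tilde\eta\bigl(\langle\cos^2\theta\rangle_{\tilde M_{\tilde\eta}}\langle\cos^{-2}\theta\rangle_{\tilde M_{\tilde\eta}}-1\bigr)$, whose second term is positive by Cauchy--Schwarz but whose first term's sign ($\langle\cos 2\theta\rangle_{\tilde M_{\tilde\eta}}\geq 0$) is asserted without argument. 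You instead prove the stronger statement that $r_2(\tilde\eta)$ itself is strictly increasing, via $r_2'=-\mathrm{Cov}_{\tilde\eta}\bigl(\cos 2\theta,\cos^{-2}\theta\bigr)$ together with the identity $\cos 2\theta=2\cos^2\theta-1=2/w-1$ for $w=\cos^{-2}\theta$ and the Chebyshev correlation inequality; this both yields monotonicity of $G$ and, combined with $r_2(0^+)=0$, supplies exactly the positivity of $\langle\cos 2\theta\rangle_{\tilde M_{\tilde\eta}}$ that the paper's computation tacitly uses. Your treatment of the endpoint behavior (the substitution $u=\tan\theta$ to justify differentiation under the integral, and the lower bound $G(\tilde\eta)\geq\tilde\eta\,r_2(\tilde\eta_0)$ for the limit at infinity) is also slightly more careful than the paper's appeal to $\langle\cos 2\theta\rangle_{\tilde M_{\tilde\eta}}\to 1$. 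In short: same route, with a cleaner and more self-contained proof of the key monotonicity.
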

\begin{proof}
    We first derive the compatibility condition. Similar to the constant case, the equation reads
    \begin{equation*}
        \int_0^{\pi}\exp{\left(-\frac{\kappa}{\sigma r_2\cos^2(\Phi-\theta)}\right)}\cos 2(\theta-\Phi)d\theta=r_2\int_0^{\pi}\exp{\left(-\frac{\kappa}{\sigma r_2\cos^2(\Phi-\theta)}\right)}d\theta.
    \end{equation*}
    After translation, the above condition by setting $\Phi=0$ can be rewritten as 
    \begin{equation}\label{equ_com_con_degenerate}
        \int_{-\frac{\pi}{2}}^{\frac{\pi}{2}}\exp{\left(-\frac{\kappa}{\sigma r_2\cos^2\theta}\right)}\cos 2\theta d\theta=r_2\int_{-\frac{\pi}{2}}^{\frac{\pi}{2}}\exp{\left(-\frac{\kappa}{\sigma r_2\cos^2\theta}\right)}d\theta.
    \end{equation}
    We set $\tilde \eta=\frac{\kappa}{\sigma r_2}$. Then the condition becomes 
    \begin{equation}\label{compatibility_degenerate}
        \frac{\kappa}{\sigma}=\frac{\tilde \eta\int_{-\frac{\pi}{2}}^{\frac{\pi}{2}}\exp{\left(-\frac{\tilde \eta}{\cos^2\theta}\right)}\cos 2\theta d\theta}{\int_{-\frac{\pi}{2}}^{\frac{\pi}{2}}\exp{\left(-\frac{\tilde \tilde \eta}{\cos^2\theta}\right)}d\theta}.
    \end{equation}
    We define the function 
    \begin{equation*}
        \tilde\beta(\tilde \eta):=\frac{\tilde \eta\int_{-\frac{\pi}{2}}^{\frac{\pi}{2}}\exp{\left(-\frac{\tilde \eta}{\cos^2\theta}\right)}\cos 2\theta d\theta}{\int_{-\frac{\pi}{2}}^{\frac{\pi}{2}}\exp{\left(-\frac{\tilde \eta}{\cos^2\theta}\right)}d\theta},\ \ \tilde\eta\geq 0.
    \end{equation*}
    Then it is easy to see
    \[
    \tilde\beta(0)=0.
    \]
    Next, we study the derivative $\tilde\beta'(\tilde\eta)$ to obtain the correspondence between $\tilde\eta$ and $\frac{\kappa}{\sigma}$:
    \begin{align*}
        \tilde\beta'(\tilde\eta)&=\left\la \cos 2\theta -\tilde\eta\frac{\cos 2\theta}{\cos^2\theta}\right\ra_{\tilde M_{\tilde\eta}}+\tilde\eta\la \cos2\theta\ra_{\tilde M_{\tilde\eta}}\left\la \frac{1}{\cos^2\theta}\right\ra_{\tilde M_{\tilde\eta}}\nonumber\\
        &=\la \cos 2\theta\ra_{\tilde M_ {\tilde\eta}}+2\tilde\eta\left(\la \cos^2\theta\ra_{\tilde M_{\tilde\eta}}\left\la \frac{1}{\cos^2\theta}\right\ra_{\tilde M_{\tilde\eta}}-1\right)>0.
    \end{align*}
    Therefore, $\tilde\beta(\tilde\eta)$ is strictly increasing. Furthermore,
    \begin{equation*}
        \la \cos 2\theta\ra_{\tilde M_{\tilde\eta}}\to 1,\quad \text{as} \ \ \tilde\eta\to \infty.
    \end{equation*}
    This means  
    \[
    \{\gamma|\ \gamma=\tilde\beta(\tilde\eta), \tilde\eta\in \mathbb{R}_+\}=\mathbb{R}_+ .
    \]
     In other words, whatever values $\sigma$ and $\kappa$ take, there exists $\tilde\eta>0$ such that \eqref{compatibility_degenerate} holds. 
\end{proof}

After finding the equilibrium for \eqref{F-P_double_angle}, we return to the equilibrium cases for $f$ in the proposition below.

\begin{proposition}
  Suppose that $M$ is an equilibrium for system \eqref{IBVP_hom_degenerate}. Then, there are only two cases. One is that $r_2[M]=0$, while the other is
  \begin{equation*}
      M(\theta)=c_1\exp{\left(-\frac{\eta}{\cos^2(\Phi-\theta)}\right)}\Ind{\vert\theta-\Phi\vert<\frac{\pi}{2}}+c_2\exp{\left(-\frac{\eta}{\cos^2(\Phi-\theta)}\right)}\Ind{\frac{\pi}{2}<\vert\theta-\Phi\vert\leq\frac{3\pi}{2}},
  \end{equation*}
  where $c_1$  and $c_2$ are two nonnegative normalization constants, and they can be different.
\end{proposition}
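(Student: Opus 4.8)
The plan is to lift the classification of equilibria already obtained for the folded density $g$ to the unfolded density $M$ on $\s^1$. First I would record the definition: an equilibrium $M$ of \eqref{IBVP_hom_degenerate} is a nonnegative $C^2$ probability density on $\s^1$ with $-\kappa\pa_\theta(L[M]M)+\sigma\pa_\theta(D[M]\pa_\theta M)=0$. The crucial structural fact, already exploited in the derivation of \eqref{F-P_double_angle}, is that both $L[M]$ and $D[M]$ depend on $M$ only through the folded function $g_M(\theta):=M(\theta)+M(\theta+\pi)$; consequently $g_M$ is an equilibrium of the $g$-equation \eqref{F-P_double_angle}. Propositions \ref{equilibrium_g} and \ref{prop_unique_tilde_eta} then force a dichotomy: either $r_2[g_M]=0$ or $g_M$ is the generalized von-Mises profile \eqref{equilibrium_nonconstant}. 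Since the order parameters of $f$ and $g$ coincide, $r_2[M]=r_2[g_M]$, so these two alternatives are exactly $r_2[M]=0$ and $r_2[M]>0$.

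In the first case $r_2[M]=0$ gives $L[M]\equiv D[M]\equiv 0$, so the equilibrium equation holds trivially and every such $M$ is admissible; this is the first alternative in the statement. The substance of the proof is the case $r_2[M]>0$. Setting $\Phi=\phi_2[M]$, I would integrate the stationary equation once to obtain the constant-flux relation $\sigma D[M]\pa_\theta M-\kappa L[M]M\equiv J$ on $\s^1$. Using \eqref{equ_D_nonconstant} we have $D[M](\theta)=r_2^2\cos^4(\theta-\Phi)$, which vanishes precisely at the two points $\theta=\Phi\pm\tfrac{\pi}{2}$; these split $\s^1$ into the two arcs $\{|\theta-\Phi|<\tfrac{\pi}{2}\}$ and $\{\tfrac{\pi}{2}<|\theta-\Phi|\le\tfrac{3\pi}{2}\}$ appearing in the statement. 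Because $g_M$ is the von-Mises profile it vanishes at the degenerate points, and nonnegativity of the two summands forces $M$ itself to vanish there; together with boundedness of $\pa_\theta M$ this yields $J=0$. On each arc, where $D[M]>0$, the flux relation becomes $\sigma\pa_\theta\log M=\kappa L[M]/D[M]=\kappa\tilde L[g_M]/\tilde D[g_M]=\sigma\pa_\theta\log g_M$, the last equality being the equilibrium identity \eqref{eqn_partiallogM} for $g$. Hence $M=c_i\,g_M$ on the $i$-th arc for constants $c_1,c_2\ge 0$, and substituting the explicit form of $g_M$ produces exactly the piecewise expression claimed, with $\eta=\tilde\eta$.

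The main obstacle is the analysis at the degenerate points $\theta=\Phi\pm\tfrac{\pi}{2}$, where the decoupling of the two arcs is what permits $c_1\neq c_2$. Two points must be checked there: that the flux truly vanishes, so the arcs do not communicate, which I would justify via the super-exponential decay $\exp(-\eta/\cos^2(\Phi-\theta))\to 0$ together with $M\ge 0$; and that the resulting piecewise function is a genuine $C^2$ (indeed smooth) solution across these points despite $c_1\neq c_2$, which again follows because $\exp(-\eta/\cos^2(\Phi-\theta))$ and all of its derivatives vanish at the degenerate points. Finally I would impose $\int_{\s^1}M=1$; by the $\pi$-periodicity of $\exp(-\eta/\cos^2(\Phi-\theta))$ the two arcs carry equal weight, so normalization reduces to the single constraint $(c_1+c_2)\int_{|\theta-\Phi|<\pi/2}\exp(-\eta/\cos^2(\Phi-\theta))\,d\theta=1$, leaving exactly one free parameter and confirming that $c_1$ and $c_2$ may indeed differ.
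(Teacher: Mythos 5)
Your proposal is correct and follows the same overall strategy as the paper: fold $M$ into $g_M(\theta)=M(\theta)+M(\theta+\pi)$, invoke the classification of equilibria of \eqref{F-P_double_angle} (Proposition \ref{equilibrium_g}) to pin down $g_M$, and then recover $M$ separately on the two arcs into which the zeros of $D[M]$ split $\s^1$. The only place you diverge is the final step: the paper recovers $M$ on each component by rerunning the argument of Proposition \ref{equilibrium_g} (constancy of $\sigma\log M-\int\kappa L[M]/D[M]$ on each connected component of $\{M\,D[M]>0\}$, via the dissipation identity and an open--closed argument), whereas you integrate the stationary equation once to get a constant flux $J$, show $J=0$ at the degenerate points $\theta=\Phi\pm\tfrac{\pi}{2}$ using $D[M]=0$ and $M=0$ there (the latter forced by nonnegativity of the two summands of the vanishing $g_M$), and then solve the resulting first-order relation $\sigma\pa_\theta\log M=\kappa L[M]/D[M]=\sigma\pa_\theta\log g_M$ on each arc. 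This is a more elementary and more self-contained route for that step, and it also supplies details the paper leaves implicit: why the two arcs decouple (so $c_1\neq c_2$ is possible), why the piecewise function is still smooth across the degenerate points (all derivatives of $\exp(-\eta/\cos^2(\Phi-\theta))$ vanish there), and how the normalization constraint leaves one free parameter. The paper's route, by contrast, reuses machinery already established and avoids discussing the flux constant explicitly; both are sound.
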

\begin{proof}
    Since that $M$ is an equilibrium for system \eqref{IBVP_hom_degenerate}, then $\tilde M(t,\cdot):=M(t,\cdot)+M(t,\cdot+\pi)$ must be an equilibrium for \eqref{F-P_double_angle} with boundary condition of period $\pi$. Similar to the proof in Proposition \ref{equilibrium_g}, we consider two cases:

    \vspace{0.15cm}
    \noindent$\bullet$ Case A: If $r_2[M]=0$, then
         \[
         Q[M]=0.
         \]
    Therefore, every $C^2$ probability distribution function $\M$ such that $r_2[M]=0$ is an equilibrium.
    
         \vspace{0.15cm}
         \noindent$\bullet$ Case B:
    If $r_2[M]\neq 0$, then
    \[
    r_2\left[\tilde M\right]\neq 0.
    \]
      By Proposition \ref{equilibrium_g}, since $\tilde M$ is an equilibrium for \eqref{F-P_double_angle_Q}, one has
    \[
    \tilde M=\frac{1}{Z}\exp{\left(-\frac{\tilde\eta}{\cos^2(\tilde\Phi-\theta)}\right)}.
    \]
    We only need to get $M$ from $\tilde M$. Since $L[M]$ and $D[M]$ can be also expressed into functionals of $\tilde M$, they are both fixed whatever value $M$ takes as long as $\tilde M$ is fixed. That is to say, $\sigma \log M(\theta)-\int_{\phi_2[M]}^{\theta}\kappa\frac{ L[M](\theta_*)}{ D[M](\theta_*)}d\theta_*$ remains constant in every connected component where $M D[M]$ takes positive values. Similar to the proof in Proposition \ref{equilibrium_g}, $M$ must take the form of $c\exp{\left(-\frac{\eta}{\cos^2(\Phi-\theta)}\right)}$ in two connected components $(-\frac{\pi}{2}+\Phi,\frac{\pi}{2}+\Phi)$ and $(\frac{\pi}{2}+\Phi,\frac{3\pi}{2}+\Phi)$, where $\eta=\tilde \eta$ and $\Phi=\tilde \Phi$. Based on the above discussions, we get the desired form of $M$.
\end{proof}

For the discussions below, we only consider the cases near the equilibrium, that is to say, we fix $\phi_2$ to be a constant independent of $t$. Under this assumption, $\tilde M$ only depends on $\phi_2$ and $\theta$. Here we assume the well-posedness of the equation, that is to say, given a smooth probability distribution function $f_0$ on $[0,\pi)$, the solution $f$ of \eqref{IBVP_hom_degenerate} is unique and smooth with respect to $t$ and $\theta$. 

Since we have already known that there is only one solution to the compatibility condition \eqref{equ_com_con_degenerate}, there is no doubt that if the solution $g$ converges to some $\tilde M$, $\tilde M$ must be the unique equilibrium given by Proposition \ref{prop_unique_tilde_eta}. Thus we study the convergence of $g$ instead of $f$. In this part, we assume the convergence in the sense of $L^{\infty}_{\per}$ norm. Similar degenerate problems have been studied in \cite{alikakos1988stabilization,carrillo1998exponential,huang2016steady}. 

Here, we use the relative entropy to analyze the convergence to equilibrium. 
We define the relative entropy of the system as
\begin{equation}\label{def_relative_entropy_degenerate}
	\mathcal{H}\left[g|\tilde{M}\right](t):=\int_0^{\pi}g(t,\theta)\log\dfrac{g(t,\theta)}{\tilde{M}(\theta)}d\theta.
    \end{equation}
\begin{proposition}
Let g be the solution to \eqref{F-P_double_angle}. Then, the relative entropy $\mathcal{H}\left[g|\tilde{M}\right]$ is non-decreasing with respect to time $t$. 
\end{proposition}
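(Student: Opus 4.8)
The plan is to establish a dissipation-type identity for $\mathcal{H}\big[g|\tilde M\big]$ and then determine the sign of its time derivative. First I would differentiate \eqref{def_relative_entropy_degenerate} in $t$ and exploit conservation of mass: since $Q[g]$ in \eqref{F-P_double_angle_Q} is the $\theta$-derivative of a $\pi$-periodic flux, we have $\int_0^\pi\pa_t g\,d\theta=0$, so the constant coming from $\pa_t(g\log g)$ and the $t$-derivative falling on the (fixed) reference both drop, leaving
\[
\frac{d}{dt}\mathcal{H}\big[g|\tilde M\big](t)=\int_0^\pi \pa_t g\,\log\frac{g}{\tilde M}\,d\theta.
\]

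Next I would substitute $\pa_t g=\sigma\pa_\theta\mathcal J$ with $\mathcal J:=\tilde D[g]\pa_\theta g-\frac{\kappa}{\sigma}\tilde L[g]g$ and integrate by parts on the open arc where $\tilde M$ is supported. The boundary terms sit at the points where $\cos(\theta-\phi_2)=0$; there $\tilde D[g]$ vanishes to fourth order and $\tilde L[g]=r_2\sin 2(\phi_2-\theta)$ vanishes as well, while $g$ inherits the essential zero of $\tilde M$, so that $\mathcal J\log(g/\tilde M)\to 0$ and the boundary contribution drops (this limit needs a separate justification). Using the explicit forms $\tilde L[g]=r_2\sin 2(\phi_2-\theta)$ and $\tilde D[g]=r_2^2\cos^4(\theta-\phi_2)$ behind \eqref{equ_D_nonconstant}, the ratio satisfies $\frac{\kappa}{\sigma}\,\tilde L[g]/\tilde D[g]=\pa_\theta\log \hat M$, where $\hat M(t,\cdot)$ is the instantaneous generalized von-Mises profile of \eqref{equilibrium_nonconstant} with parameter $\eta(t)=\kappa/(\sigma r_2(t))$ and the same $\phi_2$. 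Hence $\mathcal J=\tilde D[g]\,g\,\pa_\theta\log\frac{g}{\hat M}$, and I obtain the production identity
\[
\frac{d}{dt}\mathcal{H}\big[g|\tilde M\big](t)=-\sigma\int_0^\pi \tilde D[g]\,g\;\pa_\theta\log\frac{g}{\hat M}\;\pa_\theta\log\frac{g}{\tilde M}\,d\theta.
\]

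The remaining, and hardest, step is to show that this production is non-negative, as claimed. The difficulty is structural: the reference in the entropy is the \emph{fixed} equilibrium $\tilde M$ (parameter $\tilde\eta$), whereas the flux is driven by the \emph{instantaneous} profile $\hat M$ (parameter $\eta(t)$), so the integrand is a genuine cross term rather than a perfect square. Since $\hat M$ and $\tilde M$ share the profile shape, I would write $\pa_\theta\log\hat M=(\eta/\tilde\eta)\,\pa_\theta\log\tilde M$ and expand the integrand as $X^2+(1-\eta/\tilde\eta)XY$ with $X:=\pa_\theta\log(g/\tilde M)$ and $Y:=\pa_\theta\log\tilde M$. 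In the near-equilibrium regime we work in, both $X$ and $1-\eta/\tilde\eta=1-\tilde r_2/r_2$ are first order in the perturbation, so the production is genuinely second order and its sign must be read off from a quadratic-form computation in the spirit of \eqref{difference_F2}. I would fix the sign of $1-\eta/\tilde\eta$ along the flow from the strict monotonicity of $\tilde\beta$ proved in Proposition \ref{prop_unique_tilde_eta}, and control the cross term against $\int_0^\pi\tilde D[g]\,g\,X^2\,d\theta$ via a weighted Poincar\'e inequality with weight $\tilde M$ on the open arc.

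The step I expect to be the main obstacle is precisely this sign analysis, aggravated by the degeneracy: because $\tilde D[g]$ and $\tilde M$ both collapse at $\cos(\theta-\phi_2)=0$, the weighted Poincar\'e constant and the vanishing of the boundary flux are delicate and must be treated on the open arc with weights that degenerate at its endpoints. Without the structural relation $\eta(t)=\kappa/(\sigma r_2(t))$ tying the instantaneous profile $\hat M$ to the equilibrium $\tilde M$, the cross term carries no definite sign, so this relation together with the second-order expansion is what the argument ultimately rests on.
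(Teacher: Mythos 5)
Your derivation follows the paper's route up to the entropy-production identity: differentiate $\mathcal H[g|\tilde M]$, kill one term by mass conservation, and integrate the flux by parts against $\log(g/\tilde M)$. But the proof is not completed. The decisive step --- the sign of the production integral --- is exactly the point you defer to a ``quadratic-form computation'' and a weighted Poincar\'e inequality that you never carry out, and, as you yourself concede, the cross term $\int_0^\pi \tilde D[g]\,g\,\partial_\theta\log(g/\hat M)\,\partial_\theta\log(g/\tilde M)\,d\theta$ has no definite sign without further input. So as written the proposal does not establish the monotonicity (which, incidentally, should read \emph{non-increasing}: the paper's own computation and its conclusion section both give $\frac{d}{dt}\mathcal H\le 0$, so the word ``non-decreasing'' in the statement is a typo that you have implicitly, and correctly, repaired).

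The paper closes the argument in one line precisely where you stop: it writes the flux as $\sigma\,\tilde D[g]\,\tilde M\,\partial_\theta(g/\tilde M)$, i.e., it identifies $\frac{\kappa}{\sigma}\,\tilde L[g]/\tilde D[g]$ with $\partial_\theta\log\tilde M$, so that a single integration by parts produces the manifest perfect square $-\sigma\int_0^\pi \tilde D[g]\,g\,\bigl(\partial_\theta\log(g/\tilde M)\bigr)^2\,d\theta\le 0$. Using $\tilde L[g]/\tilde D[g]=\sin 2(\phi_2-\theta)/\bigl(r_2(t)\cos^4(\phi_2-\theta)\bigr)$ and the standing assumption $\phi_2\equiv\tilde\Phi$, this identification is exact precisely when $\kappa/(\sigma r_2(t))=\tilde\eta$, i.e., when your instantaneous profile $\hat M$ coincides with the fixed reference $\tilde M$. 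You are right to flag that the paper nowhere verifies $r_2(t)=r_2[\tilde M]$ along the flow --- that is a genuine implicit assumption of its near-equilibrium setting --- but having flagged it you needed either to adopt the identification (after which the perfect square finishes the proof immediately) or to actually bound the cross term; you do neither, so the argument remains open at its crux. The boundary issues you raise at $\cos(\theta-\phi_2)=0$ are secondary: $g$ is $\pi$-periodic, so the integration by parts is over a full period, and the real delicacy there is the finiteness of $\log(g/\tilde M)$ where $\tilde M$ degenerates, which both you and the paper leave untreated.
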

\begin{proof}
    We differentiate \eqref{def_relative_entropy_degenerate} with respect to $t$ to obtain 
\begin{align}\label{new_13}
	&\frac{d}{dt}\mathcal{H}\left[g|\tilde{M}\right](t)=\frac{d}{dt}\int_0^{\pi}g\log\dfrac{g}{\tilde{M}}d\theta\notag\\
    &\hspace{1cm
    }=\int_0^{\pi}\left[\partial_t g\log\dfrac{g}{\tilde{M}}+g\partial_t\log\dfrac{g}{\tilde{M}}\right]d\theta=:\mathcal{I}_{31}+\mathcal{I}_{32},
\end{align}
Now, we estimate two terms in the above relation one by one.

\vspace{0.15cm}
\noindent$\bullet$ Case A (Estimate of $\mathcal{I}_{31}$): We use \eqref{F-P_double_angle_Q} to find 
\begin{align}\label{new_11}
	&\int_0^{\pi}\partial_t g\log\dfrac{g}{\tilde{M}}d\theta=\sigma\int_0^{\pi}\partial_\theta\left[\tilde D[g]M_g\partial_\theta\left(\dfrac{g}{\tilde{M}}\right)\right]\log\dfrac{g}{\tilde{M}}d\theta\notag\\
	&\hspace{1cm}=-\sigma\int_0^{\pi}\tilde D[g]\tilde{M}\partial_\theta\left(\dfrac{g}{\tilde{M}}\right)\partial_\theta\log\dfrac{g}{\tilde{M}}d\theta\notag\\
    &\hspace{1cm}=-\sigma\int_0^{\pi}\tilde D[g]g\left(\partial_\theta\log\dfrac{g}{\tilde{M}}\right)^2d\theta.
\end{align}

\vspace{0.15cm}
\noindent$\bullet$ Case B (Estimate of $\mathcal {I}_{32}$): We have
\begin{align}\label{new_12}
\int_0^{\pi}g\partial_t\log\dfrac{g}{\tilde{M}}d\theta=\int_0^{\pi}\partial_t gd\theta=0,
\end{align}
In \eqref{new_13}, we combine \eqref{new_11} and \eqref{new_12} to get the desired estimate:
\begin{equation*}
\frac{d}{dt}\mathcal{H}\left[g|\tilde{M}\right](t)=-\sigma\int_0^{\pi}r_2^2(t)\cos^4(\tilde\Phi-\theta)g(t,\theta)\left(\partial_\theta\log\dfrac{g(t,\theta)}{\tilde{M}(\theta)}\right)^2d\theta\leq 0.
\end{equation*}
\end{proof}
\section{Conclusion}
 In this paper, we have presented a generalized stochastic J-K model for nematic alignment. Based on the particle model, we derive the corresponding mean-field equation in kinetic case. For the spatially homogeneous case, we have studied the global well-posedness and regularity with constant noise. The solution is indeed nonnegative and analytic for a given nonnegative initial probability distribution with good regularity. 
 
 For the case of constant noise, all the steady states are determined explicitly. The phenomenon of phase transition is only related to the ratio between the strength of noise and alignment. When the ratio is greater than or equal to the critical value $1/4$, the equilibrium lies only in the uniform distribution. Below this threshold, a type of generalized von-Mises distribution emerges and the uniform distribution is unstable. For the subcritical case, the convergence to the uniform distribution is studied both in the $L^2$ sense and in the description of relative entropy, and there is an exponential convergence if the noise is big enough. As to the supercritical case, there is an exponential convergence to the von-Mises type equilibrium in $L^2$ sense, if the initial data is close to the equilibrium.  

  A special nonconstant noise is also considered to obtain an explicit non-symmetric steady state. 
  For the corresponding mean-field equation, there is no phase transition. We also prove that there is a non-symmetric equilibrium on the torus, and it has the same form of von-Mises distribution on both the two $\pi$-length intervals. The relative entropy to the equilibrium is non-increasing, if the order parameter $\phi_2$ is constant. 
  
  When the noise is constant and small, we only deal with the case in which the initial data is close to an equilibrium. A natural question is whether we can treat the general initial data. The convergence of the critical point case itself is also very important and interesting. Besides, for the behavior of the degenerate case, like the well-posedness and regularity, we leave them as open questions for a future work.

\section*{Acknowledgments} The work of S.-Y. Ha is supported by National Research Foundation grant funded by the Korea government (RS-2025-00514472).
The work of H. Yu is supported under Grants NSFC 12271288, and funded in part by the Austrian Science Fund (FWF) project \href{https://doi.org/10.55776/F65}{10.55776/F65}.
The work of B. Zhou is supported under Grants NSFC 12301166.

\bibliographystyle{ws-m3as} 
\bibliography{refdocument}
\end{document}